\documentclass[submission,copyright,creativecommons]{eptcs}
\usepackage[hypcap]{caption}

\listfiles
\usepackage{amsmath}
\usepackage{amssymb}
\usepackage{latexsym}
\usepackage{makeidx}
\usepackage{amsthm}
\usepackage[inline]{enumitem}
\usepackage{stmaryrd}
\usepackage{tikzexternal}
\tikzexternalize 
\usepackage{multirow,bigdelim}
\usepackage{emptypage}


\newcommand{\p}{\!:\!}
\newcommand{\Sh}{\mathrm{Sh}}

\newcommand{\isoarrow}{\xrightarrow{\;\sim\;}}

\newcommand{\op}[1]{{#1}^{op}}

\newcommand{\Diag}{\mathrm{Diag}}

\newcommand{\true}{\mathrm{true}}

\newcommand{\catset}{\mathbf{Set}}

\newcommand{\N}{\mathbf{N}}
\newcommand{\X}{\mathbf{X}}
\newcommand{\Y}{\mathbf{Y}}
\newcommand{\F}{\mathbf{F}}
\newcommand{\y}{\mathbf{y}}
\newcommand{\bP}{\mathbf{P}}
\newcommand{\Q}{\mathbf{\widetilde{P}}}
\newcommand{\sN}{\widetilde{\N}}
\newcommand{\E}{\mathbf{E}}
\newcommand{\G}{\mathbf{G}}

\newcommand{\mc}[1]{\mathcal{#1}}

\newcommand{\ra}{{\mc{RA}_K}}

\newcommand{\raop}{{\op{\mc{RA}}_K}}
\newcommand{\raqop}{{\op{\mc{RA}}_\rationals}}

\newcommand{\nats}{\mathbb{N}}

\newcommand{\rationals}{\mathbb{Q}}

\newcommand{\ideal}[1]{\langle #1 \rangle}

\newcommand{\dom}[1]{\textup{dom(}#1\textup{)}}
\newcommand{\codom}[1]{\textup{cod(}#1\textup{)}}
\newcommand{\forces}{\Vdash}

\newcommand{\bJ}{\mathbf{J}}

\newtheorem{thm}{Theorem}[section]
\newtheorem{lem}[thm]{Lemma}
\newtheorem{fact}[thm]{Fact}

\newtheorem{cor}[thm]{Corollary}
\theoremstyle{definition}
\newtheorem{defn}[thm]{Definition}

\title{A Sheaf Model of the Algebraic Closure}
\author{Bassel Mannaa \qquad\qquad Thierry Coquand
\institute{Department of Computer Science and Engineering\\
University of Gothenburg\thanks{The research leading to this work has been supported by ERC Advanced grant project 247219.}\\
Gothenburg, Sweden}
\email{bassel.mannaa@cse.gu.se \quad \qquad thierry.coquand@cse.gu.se}}

\begin{document}
\maketitle

\begin{abstract}
In constructive algebra one cannot in general decide the irreducibility of a polynomial over a field $K$. This poses some problems to showing the existence of the algebraic closure of $K$. We give a possible constructive interpretation of the existence of the algebraic closure of a field in characteristic $0$ by building, in a constructive metatheory, a suitable site model where there is such an algebraic closure. One can then extract computational content from this model. We give examples of computation based on this model.
\end{abstract}

\section{Introduction}

Since in general it is not decidable whether a given polynomial over a field is irreducible, even when the field is given explicitly \cite{Frohlich26011956}, the notion of algebraic field extension and consequently the notion of algebraic closure becomes problematic from a constructive point of view. Even in situations where one can constructively assert the existence of an algebraic closure of a field \cite[Ch. 6]{Mines} the computational content of such assertions are not always clear. We present a constructive interpretation of the algebraic closure of field $K$ in characteristic $0$ as a site model. Our approach is different from \cite{scedrov1984forcing} in that we do not assume a polynomial over a field to be decomposable into irreducible factors. The model presented here has a direct computational content and can be viewed as a model of dynamical evaluation in the sense of Duval \cite{duval0} (see also \cite{Coste2001203}). The site, described in section \ref{site}, is given by the category of finitely presented (von Neumann) regular algebras over $K$ with the appropriate Grothendieck topology. In section \ref{algclosed} we prove that the topos $\mc{E}$ of sheaves on this site contains a model of an algebraically closed field extension of $K$. An alternative approach using profinite Galois group is presented in \cite{Kennison19827}.  We also investigate some of the properties of the topos $\mc{E}$. Theorem \ref{choicefails} shows that the axiom of choice fails to hold in $\mc{E}$ whenever $K$ is not algebraically closed. Theorem \ref{dependentchoicefails} shows that when the base field $K$ is the rationals the weaker axiom of dependent choice fails to hold. We restrict ourselves to constructive metatheory throughout the paper with the exception of section \ref{sec:booleaness} in which we show that in a classical metatheory the topos $\mc{E}$ is boolean (Theorem \ref{booleaness}). As we will demonstrate by Theorem \ref{notshownboolean} this cannot be shown to hold in an intuitionistic metatheory.

\section{Coverage, sheaves, and Kripke--Joyal semantics}
\label{prel}
In this section we recall some notions that we will use in the remainder the paper, mostly following the presentation in \cite{elephant2}. A \emph{coverage} on a category $\mc{C}$ is a function $\bJ$ assigning to each object $C$ of $\mc{C}$ a collection $\bJ(C)$ of families of morphisms with codomain $C$ such that for any $\{f_i:C_i \rightarrow C\}_{i\in I} \in \bJ(C)$ and morphism $g:D\rightarrow C$ of $\mc{C}$ there exist $\{h_j: D_j \rightarrow D\}_{j\in J} \in \bJ(D)$ such that for each $j \in J$ the morphism $g h_j$ factors through $f_\ell$ for some $\ell \in I$. A family $S \in \bJ(C)$ is called \emph{elementary cover} or elementary covering family of $C$. A site is a category with coverage $(\mc{C},\bJ)$. For a presheaf $\bP:\op{\mc{C}} \rightarrow \catset$ and family $S=\{g_i:A_i \rightarrow A\}_{i\in I}$ of morphisms of $\mc{C}$ we say that a family $\{s_i\in \bP(A_i)\}_{i\in I}$ is compatible if for each $\ell, j \in I$ whenever we have $h:B\rightarrow A_\ell$ and $f:B \rightarrow A_j$ such that $g_\ell h = g_j f$ then $s_\ell h = s_j f$, where by $s_\ell h$ we mean the restriction of $s_\ell$ along $h$, i.e. $\bP(h) s_\ell$. A presheaf $\bP$ is a sheaf on the site $(\mc{C},\bJ)$ if for any object $C$ and any $\{f_i:C_i\rightarrow C\}_{i\in I}\in \bJ(C)$ if $\{s_i\in \bP(C_i)\}_{i\in I}$ is compatible then there exist a unique $s \in \bP(C)$ such that $s f_i = s_i$. We call such $s$ the amalgamation of $\{s_i\}_{i\in I}$. Let $\bJ$ be a coverage on $\mc{C}$ we define a closure $\bJ^\ast$ of $\bJ$ as follows: For all objects $C$ of $\mc{C}$ \begin{enumerate*}[label=\roman*.] \item $\{C \xrightarrow{1_C} C\} \in \bJ^\ast(C)$, \item If $S\in \bJ(C)$ then $S \in \bJ^\ast(C)$, and, \item If $\{C_i \xrightarrow{f_i} C\}_{i\in I} \in \bJ^\ast(C)$ and for each $i\in I$, $\{C_{ij}\xrightarrow{g_{ij}} C_i\}_{j\in J_i} \in \bJ^\ast(C_i)$ then $\{C_{ij}\xrightarrow{f_i g_{ij}} C\}_{i\in I, j \in J_i} \in \bJ^\ast(C)$.\end{enumerate*} A family $T \in \bJ^\ast(C)$ is called \emph{cover} or covering family of $C$.

We work with a typed language with equality $\mc{L}[V_1,...,V_n]$ having the basic types $V_1,...,V_n$ and type formers $-\times-, (-)^-, \mc{P}(-)$. The language $\mc{L}[V_1,...,V_n]$ has typed constants and function symbols. For any type $Y$ one has a stock of variables $y_1,y_2,...$ of type $Y$. Terms and formulas of the language are defined as usual. We work within the proof theory of intuitionistic higher-order logic (IHOL). A detailed description of this deduction system is given in \cite{awodey97logicin}. 

The language $\mc{L}[V_1,...,V_n]$ along with deduction system IHOL can be interpreted in an elementary topos in what is referred to as \emph{topos semantics}. For a sheaf topos this interpretation takes a simpler form reminiscent of Beth semantics, usually referred to as \emph{Kripke--Joyal sheaf semantics}. We describe this semantics here briefly following \cite{scedrov1984forcing}.

Let $\mc{E}=\Sh(\mc{C},\bJ)$ be a sheaf topos. An interpretation of the language $\mc{L}[V_1,...,V_n]$ in the topos $\mc{E}$ is given as follows:  Associate to each basic type $V_i$ of $\mc{L}[V_1,...,V_n]$ an object $\mathbf{V}_i$ of $\mc{E}$. If $Y$ and $Z$ are types of $\mc{L}[V_1,...,V_n]$ interpreted by objects $\mathbf{Y}$ and $\mathbf{Z}$, respectively, then the types $Y\times Z, Y^Z, \mc{P}(Z)$ are interpreted by $\mathbf{Y} \times \mathbf{Z}, \mathbf{Y}^{\mathbf{Z}}, \Omega^{\mathbf{Z}}$, respectively, where $\Omega$ is the subobject classifier of $\mc{E}$. A constant $e$ of type $E$ is interpreted by an arrow $\mathbf{1}\xrightarrow{\mathbf{e}} \mathbf{E}$ where $\mathbf{E}$ is the interpretation of $E$. For a term $\tau$ and an object $\X$ of $\mc{E}$, we write $\tau\p \X$ to mean $\tau$ has a type $X$ interpreted by the object $\X$. 

Let $\phi(x_1,...,x_n)$ be a formula with variables $x_1 \p \X_1,...,x_n\p \X_n$. Let $c_1 \in \X_j(C),...,c_n \in \X_n(C)$ for some object $C$ of $\mc{C}$. We define the relation $C$ \emph{forces} $\phi(x_1,...,x_n)[c_1,...,c_n]$ written $C \forces \phi(x_1,...,x_n)[c_1,...,c_n]$ by induction on the structure of $\phi$. 

\begin{defn}[Forcing]
\label{forcing}
First we replace the constants in $\phi$ by variables of the same type as follows: Let $e_1\p\mathbf{E}_1,...,e_m\p\mathbf{E}_m$ be the constants in $\phi(x_1,...,x_n)$ then $C\forces \phi(x_1,...,x_n)[c_1,...,c_n]$ iff \begin{equation*} C \forces \phi[y_1/e_1,...,y_m/e_m] (y_1,...,y_m,x_1,...,x_n) [\mathbf{e}_{1_C}(\ast),...,\mathbf{e}_{m_C}(\ast),c_1,...,c_n]\end{equation*} 
where $y_i\p \mathbf{E}_i$ and $\mathbf{e}_i:\mathbf{1}\rightarrow \mathbf{E}_i$ is the interpretation of $e_i$.

Now it suffices to define the forcing relation for formulas free of constants by induction as follows:

\begin{enumerate}[resume]
\item[\fbox{$\top$}] $C \forces \top$.
\item[\fbox{$\bot$}] $C \forces \bot $ iff the empty family is a cover of $C$.
\item[\fbox{$=$}] $C \forces (x_1 = x_2) [c_1,c_2]$ iff $c_1 = c_2$. 
\item[\fbox{$\land$}] $C \forces (\phi \land \psi)(x_1,...,x_n)[c_1,...,c_n]$ iff  $C \forces \phi(x_1,...,x_n)[c_1,...,c_n]$ and $C \forces \psi(x_1,...,x_n)[c_1,...,c_n]$.
\item[\fbox{$\lor$}] $C \forces (\phi \lor \psi)(x_1,...,x_n)[c_1,...,c_n]$ iff there exist a cover $\{C_i \xrightarrow{\;f_i\;} C\}_{i\in I} \in \bJ^\ast(C)$ such that \\$C_i \forces \phi(x_1,...,x_n)[c_1 f_i,...,c_n f_i]$ or $C_i \forces \psi(x_1,...,x_n)[c_1 f_i,...,c_n f_i]$ for each $i\in I$.
\item[\fbox{$\Rightarrow$}] $C \forces (\phi\Rightarrow \psi)(x_1,...,x_n)[c_1,...,c_n]$ iff  for every morphism $f:D \rightarrow C$ whenever\\ $D\forces \phi(x_1,...,x_n)[c_1 f,...,c_n f]$ one has $D\forces \psi(x_1,...,x_n)[c_1 f,...,c_n f]$.
\end{enumerate}
Let $y$ be a variable of the type $Y$ interpreted by the object $\Y$ of $\mc{E}$.
\begin{enumerate}[resume]
\item[\fbox{$\exists$}] $C\forces (\exists y \phi(x_1,...,x_n,y)) [c_1,...,c_n]$ iff there exist a cover $\{C_i \xrightarrow{\;f_i\;} C\}_{i\in I} \in \bJ^\ast(C)$ such that for each $i\in I$ one has $C_i\forces \phi(x_1,...,x_n,y)[c_1 f_i,...,c_n f_i,d]$ for some $d \in \Y(C_i)$. 
\item[\fbox{$\forall$}] $C \forces (\forall y \phi(x_1,...,x_n,y)) [c_1,...,c_n]$ iff  for every morphism $f:D\rightarrow C$ and for all $d \in \Y(D)$ one has $D\forces \phi(x_1,...,x_n,y)[c_1 f,...,c_n f,d]$.
\end{enumerate}
\end{defn}

We have the following derivable \emph{local character} and \emph{monotonicity} laws:
\begin{enumerate}[resume]
\item [\fbox{LC}] If $\{C_i \xrightarrow{f_i} C\}_{i \in I} \in \bJ^\ast(C)$ and for all $i\in I$, $C_i \forces \phi(x_1,...,x_n)[c_1 f_i,...,c_n f_i]$ then $C \forces \phi(x_1,...,x_n)[c_1,...,c_n]$. 
\item [\fbox{M}] If $C \forces \phi(x_1,...,x_n)[c_1,...,c_n]$ and $f:D\rightarrow C$ then $D \forces \phi(x_1,...,x_n)[c_1 f,...,c_n f]$.
\end{enumerate}
\section{The topos $\Sh(\raop,\bJ)$}
\label{site}

\begin{defn}[Regular ring]
A commutative ring $R$ is \emph{(von Neumann)} regular if for every element $a \in R$ there exist $b\in R$ 
such that  $a b a = a$ and $b a b = b$. This element $b$ is called the quasi-inverse of $a$. 
\end{defn}

 The quasi-inverse of an element $a$ is unique for $a$ \cite [Ch. 4] {lombardi_book}. We thus use the notation $a^*$ to refer to the quasi-inverse of $a$. A ring is regular iff it is zero-dimensional and reduced. To be regular is equivalent to the fact that
any principal ideal (consequently, any finitely generated ideal) is generated by an idempotent.
If $R$ is regular and $a\in R$ then $e=aa^*$ is an idempotent such that
$\ideal{e}=\ideal{a}$ and $R$ is isomorphic to $R_0\times R_1$ with $R_0=R/\ideal{e}$ and
$R_1 = R/\ideal{1-e}$. Furthermore $a$ is $0$ on the component $R_0$ and invertible on the component
$R_1$.

\begin{defn}[Fundamental system of orthogonal idempotents]
A family $(e_i)_{i \in I}$ of idempotents in a ring $R$ is a fundamental system of orthogonal idempotents if $\sum_{i \in I} e_i = 1$ and $\forall i, j [i \neq j \Rightarrow e_i e_j = 0]$.
\end{defn}

\begin{lem}
Given a fundamental system of orthogonal idempotents $(e_i)_{i\in I}$ in a ring $A$ we have a decomposition $A \cong \prod_{i\in I} A/\ideal{1-e_i}$.
\end{lem}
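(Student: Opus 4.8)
The plan is to exhibit the isomorphism explicitly as the product of the canonical quotient maps. Write $\pi_i : A \to A/\ideal{1-e_i}$ for the quotient map and let $\phi : A \to \prod_{i\in I} A/\ideal{1-e_i}$ be $\phi(a) = (\pi_i(a))_{i\in I}$; this is a ring homomorphism by the universal property of the product. Note first that the relation $\sum_{i\in I} e_i = 1$ forces $I$ to be finite, so the product is a finite one and all sums below are legitimate. The computational heart of the argument is the observation that, since $1-e_i$ is again idempotent, one has $\ideal{1-e_i} = (1-e_i)A$ and hence, for any $a \in A$,
\[
a \in \ideal{1-e_i} \iff a e_i = 0 .
\]
Indeed, if $a = (1-e_i)c$ then $a e_i = (1-e_i)e_i c = 0$, and conversely $a e_i = 0$ gives $a = a(1-e_i) \in \ideal{1-e_i}$.

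Using this equivalence, injectivity of $\phi$ is immediate: if $\phi(a) = 0$ then $a e_i = 0$ for every $i$, so $a = a\cdot 1 = a\sum_{i\in I} e_i = \sum_{i\in I} a e_i = 0$. For surjectivity I would, given a tuple $(\pi_i(a_i))_{i\in I}$ with representatives $a_i \in A$, set $a = \sum_{i\in I} a_i e_i$ and verify that $\pi_j(a) = \pi_j(a_j)$ for each $j$. This follows by computing $a e_j = \sum_{i\in I} a_i e_i e_j = a_j e_j$, using orthogonality $e_i e_j = 0$ for $i \neq j$ together with $e_j^2 = e_j$; hence $(a - a_j)e_j = 0$, i.e. $a - a_j \in \ideal{1-e_j}$ by the displayed equivalence, which is exactly $\pi_j(a) = \pi_j(a_j)$.

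The argument is entirely constructive and involves no case analysis, so I do not anticipate a genuine obstacle; the only points demanding care are the finiteness of $I$ (needed to make sense of the sums and of the product) and the equivalence $a \in \ideal{1-e_i} \iff a e_i = 0$, on which both halves of the bijection rest. One could alternatively derive the statement by iterating the splitting $A \cong A/\ideal{e}\times A/\ideal{1-e}$ recorded just before the lemma, but the direct construction above is cleaner and has the advantage of making the inverse of $\phi$ explicit, namely $(\pi_i(a_i))_{i\in I} \mapsto \sum_{i\in I} a_i e_i$.
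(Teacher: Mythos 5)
Your proof is correct, and it takes a different (though equally elementary) route from the paper's. The paper disposes of the lemma in one line by induction on $|I|$, reducing to the binary splitting $A \cong A/\ideal{e} \times A/\ideal{1-e}$ recorded just before the statement; you instead construct the isomorphism $\phi(a)=(\pi_i(a))_{i\in I}$ directly and verify injectivity and surjectivity by hand, via the equivalence $a\in\ideal{1-e_i} \iff ae_i=0$. Both arguments are constructive and both ultimately rest on the same two computations ($\sum_i e_i = 1$ for injectivity, orthogonality for surjectivity); the inductive route is shorter on paper because it outsources those computations to the two-idempotent case, while your direct version buys an explicit description of the inverse, $(\pi_i(a_i))_{i\in I}\mapsto \sum_i a_ie_i$, which is in fact the formula implicitly used later when the paper writes elements of $\F(R)$ in tuple notation $(s_i)_{i\in I}$ (Corollary \ref{sheafofprodequalprodofsheaf}) and when amalgamations are computed in Lemma \ref{isasheaf}. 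Your remark that $\sum_{i\in I}e_i=1$ forces $I$ to be finite is a reasonable reading of the definition (the formal sum only makes sense for finitely many indices, or finitely many nonzero terms), and it is exactly what makes the product finite and the inductive proof terminate as well. No gaps.
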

\begin{proof}
Follows by induction from the fact that $A \cong A/\ideal{e} \times A/\ideal{1-e}$ for an idempotent $e \in A$.
\end{proof}

\begin{defn}[Separable polynomial]
Let $R$ be a ring. A polynomial $p \in R[X]$ is separable if there exist $r,s\in R[X]$ such that $r p + s p' = 1$, where $p' \in R[X]$ is the derivative of $p$.
\end{defn}

\begin{defn}
A ring $R$ is a (strict) B\'ezout ring if for all $a, b \in R$ we can find $g, a_1, b_1, c, d \in R$ such that
$a = a_1 g$, $b =b_1 g$ and $c a_1+ d b_1= 1$ \cite [Ch. 4] {lombardi_book}.
\end{defn}

 If $R$ is a regular ring then $R[X]$ is a strict B\'ezout ring (and the converse is true \cite{lombardi_book}).
Intuitively we can compute the gcd as if $R$ was a field, but we may need to split $R$ when deciding
if an element is invertible or $0$. Using this, we see that given $a,b$ in $R[X]$ we can find a decomposition
$R_1,\dots ,R_n$ of $R$ and for each $i$ we have
 $g, a_1, b_1, c, d$ in $R_i[X]$ such that
$a = a_1 g$, $b =b_1 g$ and $c a_1+ d b_1= 1$ with $g$ monic. 

\begin{lem}\label{extension}
If $R$ is regular and $p$ in $R[X]$ is a separable polynomial then $R[a]=R[X]/\ideal{p}$ is regular.
\end{lem}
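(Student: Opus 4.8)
The plan is to verify regularity of $R[a]=R[X]/\ideal{p}$ elementwise: for an arbitrary representative $q\in R[X]$ of a class in $R[a]$ I would construct a quasi-inverse, allowing myself to split $R$ into a finite product of regular rings along the way. This reduction is harmless because a family of quasi-inverses in the factors assembles to a quasi-inverse in the product, the defining equations $q\,q^{*}\,q=q$ and $q^{*}\,q\,q^{*}=q^{*}$ holding componentwise; so by the decomposition lemma it is enough to produce the quasi-inverse after an arbitrary finite splitting of $R$.

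First I would feed the pair $p,q$ into the gcd-with-splitting consequence of the strict B\'ezout property of $R[X]$ recalled above. This yields a finite decomposition $R\cong\prod_i R_i$ together with, in each $R_i[X]$, a factorization $p=p_1 g$, $q=q_1 g$ with $g$ monic and a B\'ezout relation $c\,p_1+d\,q_1=1$. Since $R[X]/\ideal{p}\cong\prod_i R_i[X]/\ideal{p}$, it suffices to treat a single factor, so I may assume these data already live in $R[X]$.

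The key step is to observe that separability of $p$ forces $p_1$ and $g$ to be comaximal. Projecting the witness $r\,p+s\,p'=1$ along $R\to R_i$ keeps $p$ separable in each factor, and writing $p'=p_1' g+p_1 g'$ and regrouping gives $(r\,p_1+s\,p_1')\,g+(s\,g')\,p_1=1$, that is $u\,p_1+v\,g=1$. By the Chinese remainder theorem this produces a splitting $R[X]/\ideal{p}\cong R[X]/\ideal{p_1}\times R[X]/\ideal{g}$. Modulo $g$ the element $q=q_1 g$ is $0$; modulo $p_1$ it is a unit, since $q_1$ is invertible there (with inverse $d$, from $c\,p_1+d\,q_1=1$) and $g$ is invertible there (with inverse $v$, from comaximality). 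Hence $q$ is invertible on one component and $0$ on the other, so it has the quasi-inverse $(q^{-1},0)$ in $R[X]/\ideal{p_1}\times R[X]/\ideal{g}$.

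Collecting these component-wise quasi-inverses across the $R_i$ gives a quasi-inverse of $q$ in $R[a]$, and since $q$ was arbitrary, $R[a]$ is regular. I expect the main obstacle to be precisely the comaximality of $p_1$ and $g$: it is here, and only here, that separability (rather than the bare factorization through a gcd) is used, and it is exactly what rules out the nilpotents that would otherwise obstruct regularity of a quotient by a non-squarefree polynomial. The remaining steps are routine given the splitting machinery and the decomposition lemma already in place.
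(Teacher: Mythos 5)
Your proof is correct and follows essentially the same route as the paper's: compute the gcd $g$ of $p$ and $q$ via the strict B\'ezout property (splitting $R$ as needed), and use separability to obtain the B\'ezout relation $u\,g+v\,p_1=1$ between $g$ and the cofactor $p_1$. The paper merely packages the conclusion differently, observing that $e=u(a)g(a)$ is an idempotent generating $\ideal{q(a)}$, which is exactly the idempotent realizing your Chinese-remainder splitting $R[a]\cong R[X]/\ideal{p_1}\times R[X]/\ideal{g}$.
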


\begin{proof}
If $c = q(a)$ is an element of $R[a]$ with $q$ in $R[X]$ we compute the gcd $g$ of $p$ and $q$.
If $p = gp_1$, we can find $u$ and $v$ in $R[X]$ such that $ug + vp_1 = 1$ since $p$ is separable.
We then have $g(a) p_1(a) = 0$ and $u(a)g(a) + v(a)p_1(a) = 1$. It follows that $e = u(a)g(a)$ is
idempotent and we have $\ideal{e} = \ideal{g(a)}$.
\end{proof}

An algebra $A$ over a field $K$ is \emph{finitely presented} if it is of the form $K[X_1,..,X_n]/\ideal{f_1,...,f_m}$, i.e. the quotient of the polynomial ring over $K$ in finitely many variables by a finitely generated ideal. 

In order to build the classifying topos of a coherent theory $T$ it is customary in the literature to consider the category of all finitely presented $T_0$ algebras where $T_0$ is an equational subtheory of $T$. The axioms of $T$ then give rise  to a coverage on the dual category  \cite[Ch. 9]{makkai1977first}. For our purpose consider the category $\mc{C}$ of finitely presented $K$-algebras. Given an object $R$ of $\mc{C}$, the axiom schema of algebraic closure and the field axiom give rise to families
\begin{enumerate*}[label=(\roman*.)]
\item $R \rightarrow R[X]/\ideal{p}$ where $p\in R[X]$ is monic and
\item \begin{tikzpicture}[baseline=(current bounding box.west)]
  \node[outer sep=0] {\begin{tikzcd}[column sep=tiny,row sep=tiny] {}&R/\ideal{a} \\ R \arrow{ur} \arrow{dr} \\ {}& R[\tfrac{1}{a}] \end{tikzcd}};\end{tikzpicture}, for $a \in R$.
\end{enumerate*} 
Dualized, these are elementary covering families of $R$ in $\op{\mc{C}}$. We observe however that we can limit our consideration only to those finitely presented $K$-algebras that are zero dimensional and reduced, i.e. regular. In this case we can assume $a$ is an idempotent and we only consider extensions $R[X]/\ideal{p}$ where $p$ is separable. 

Let $\ra$ be the small category of finitely presented regular algebras over a fixed field $K$ and $K$-homomorphisms. First we fix an countable set of names $S$. An object of $\ra$ is a regular algebra of the form $K[X_1,...,X_n]/\ideal{f_1,...,f_m}$ where $X_i \in S$ for all $1\leq i \leq n$. Note that for any object $R$, there is a unique morphism $K \rightarrow R$.  A finitely presented regular $K$-algebra $A$ is a finite dimensional $K$-algebra, i.e. $A$ has a finite dimension as a vector space over $K$ \cite[Ch 4, Theorem 8.16]{lombardi_book}. The trivial ring $0$ is the terminal object in the category $\ra$ and $K$ is its initial object. 
  
To specify a coverage $\bJ$ on the category $\raop$, we define for each object $A$ a collection $\op{\bJ}(A)$ of families of morphisms of $\ra$ with domain $A$. We then take $\bJ(A)$ to be the dual of $\op{\bJ}(A)$ in the sense that $\{\overline{\varphi_i}: A_i \rightarrow A\}_{i\in I} \in \bJ(A)$ \emph{if and only if} $\{\varphi_i: A \rightarrow A_i\}_{i\in I} \in \op{\bJ}(A)$ where $\varphi_i$ of $\ra$ is the dual of $\overline{\varphi}_i$ of $\raop$. We call $\op{\bJ}$ cocoverage. We call an element of $\op{\bJ}(A)$ an elementary cocover (cocovering family) of $A$.  We define $\op{\bJ^\ast}$ similarly. We call elements of $\op{\bJ^\ast}(A)$ cocovers (cocovering families) of $A$. By a \emph{separable extension} of a ring $R$ we mean a ring $R[a] = R[X]/\ideal{p}$ where $p\in R[X]$ is non-constant, monic and separable.

\begin{defn}[Topology for $\raop$] 
\label{top}
For an object $A$ of $\ra$ the cocovering families are given by:
\begin{enumerate}[label*=(\roman*.),align=left]
\item \label{genDecomp} If $(e_i)_{i\in I}$ is a fundamental system of orthogonal idempotents of $A$, then $\{A \xrightarrow{\;\varphi_i\;} A/\ideal{1-e_i}\}_{i \in I} \in \op{\bJ}(A)$
where for each $i\in I$, $\varphi_i$ is the canonical homomorphism.
\item \label{genEmbed} Let $A[a]$ be a separable extension of $A$. We have $\{A \xrightarrow{\vartheta} A[a]\} \in \op{\bJ}(A)$ where $\vartheta$ is the canonical embedding.
\end{enumerate}
\end{defn}

Note that in particular \ref{top}.\ref{genDecomp} implies that the trivial algebra $0$ is covered by the empty family of morphisms since an empty family of elements in this ring form a fundamental system of orthogonal idempotents. Also note that \ref{top}.\ref{genEmbed} implies that $\{A \xrightarrow{\;1_A\;} A\}\in \op{\bJ}(A)$.

\begin{lem}
The function $\bJ$ of Definition \ref{top} is a coverage on $\raop$.
\end{lem}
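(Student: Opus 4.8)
The plan is to unwind the duality defining $\bJ$ and thereby reduce the coverage condition to a statement about pushing elementary cocovers forward along homomorphisms in $\ra$. Unwinding Definition \ref{top}, a covering family $\{f_i:C_i\to C\}_{i\in I}\in\bJ(C)$ in $\raop$ is dual to an elementary cocover $\{\bar f_i:C\to C_i\}_{i\in I}\in\op{\bJ}(C)$, and a morphism $g:D\to C$ of $\raop$ is dual to a $K$-homomorphism $\bar g:C\to D$. A desired family $\{h_j:D_j\to D\}_{j\in J}\in\bJ(D)$ is dual to a cocover $\{\bar h_j:D\to D_j\}_{j\in J}\in\op{\bJ}(D)$, and the condition that $gh_j$ factor through $f_\ell$ dualizes to the existence of a $K$-homomorphism $\bar k:C_\ell\to D_j$ with $\bar h_j\,\bar g=\bar k\,\bar f_\ell$, i.e.\ a commuting square. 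So it suffices, for each of the two elementary cocover types, to produce an elementary cocover of $D$ fitting into such squares; concretely this is stability of elementary cocovers under cobase change along $\bar g$, and since the definition of coverage only constrains the elementary families $\bJ(C)$, the two types of Definition \ref{top} are all I must treat.

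First I would treat the idempotent decompositions \ref{top}.\ref{genDecomp}. If $(e_i)_{i\in I}$ is a fundamental system of orthogonal idempotents of $C$, then $\bar g$ preserves idempotents, sums and products, so $(\bar g(e_i))_{i\in I}$ is a fundamental system of orthogonal idempotents of $D$ and $\{D\to D/\ideal{1-\bar g(e_i)}\}_{i\in I}\in\op{\bJ}(D)$. Since $\bar g(1-e_i)=1-\bar g(e_i)$, the map $\bar g$ descends to $\bar k_i:C/\ideal{1-e_i}\to D/\ideal{1-\bar g(e_i)}$, and the resulting square commutes by construction. Each quotient $D/\ideal{1-\bar g(e_i)}$ is again finitely presented and regular, so is an object of $\ra$. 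This case is routine.

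Next I would treat the separable extensions \ref{top}.\ref{genEmbed}, which I expect to be the main obstacle. Given $C[a]=C[X]/\ideal{p}$ with $p$ monic, non-constant and separable, let $p^{\bar g}\in D[X]$ be the image of $p$ under the coefficientwise action of $\bar g$. The leading coefficient $1$ is preserved, so $p^{\bar g}$ is again monic and non-constant, and applying $\bar g$ to a witness $rp+sp'=1$ gives $\bar g(r)\,p^{\bar g}+\bar g(s)\,(p^{\bar g})'=1$, using that forming the derivative commutes with the coefficientwise action of $\bar g$; hence $p^{\bar g}$ is separable. Then $D[a']=D[X]/\ideal{p^{\bar g}}$ is a separable extension of $D$, and by Lemma \ref{extension} it is regular, so an object of $\ra$, giving $\{D\to D[a']\}\in\op{\bJ}(D)$. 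The map $C[a]\to D[a']$ extending $\bar g$ and sending $X\mapsto X$ is well defined because $p\mapsto p^{\bar g}\in\ideal{p^{\bar g}}$, and the evident square commutes, which is exactly the required factorization. The genuinely delicate points here are the preservation of separability under $\bar g$ and the appeal to Lemma \ref{extension} to remain inside $\ra$; the rest is diagram chasing.

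Finally I would note that the degenerate case is subsumed: when the elementary cocover of $C$ is empty (so $C=0$ by the remark following Definition \ref{top}), any $\bar g:0\to D$ forces $D=0$, and the empty family is again a cocover of $D=0$, vacuously meeting the factorization condition. Assembling the three cases verifies that $\bJ$ satisfies the coverage axiom.
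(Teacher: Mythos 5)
Your proof is correct and follows essentially the same route as the paper's: dualize the coverage condition to a statement about pushing elementary cocovers forward along a $K$-homomorphism, then handle the two clauses of Definition \ref{top} by transporting the fundamental system of idempotents (resp.\ the separable polynomial, using the image of the witness $rp+sp'=1$) and exhibiting the induced map on quotients (resp.\ on extensions) that makes the square commute. Your extra remarks on regularity via Lemma \ref{extension} and on the empty cocover are sound but not needed beyond what the paper already records.
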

\begin{proof}
Let $\eta:R \rightarrow A$ be a morphism of $\ra$ and $S\in \op{\bJ}(R)$. We show that there exist an elementary cocover $T\in\op{\bJ}(A)$ such that for each $\vartheta \in T$, $\vartheta \eta$ factors through some $\varphi \in S$. By duality, this implies $\bJ$ is a coverage on $\raop$. By case analysis on the clauses of Definition \ref{top}.
\begin{enumerate}[label=(\roman*.),align=left]
\item If $S = \{\varphi_i : R \rightarrow R/\ideal{1-e_i}\}_{i \in I}$, where $(e_i)_{i\in I}$ is a fundamental system of orthogonal idempotents of $R$. In $A$, the family $(\eta(e_i))_{i\in I}$ is fundamental system of orthogonal idempotents. We have an elementary cocover $\{\vartheta_i: A \rightarrow A/\ideal{1-\eta(e_i)}\}_{i \in I }\in \op{\bJ}(A)$. For each $i\in I$,  the homomorphism $\eta$ induces a $K$-homomorphism $\eta_{e_i}:R/\ideal{1-e_i} \rightarrow A/\ideal{1-\eta(e_i)}$ where $\eta_{e_i}(r+	\ideal{1-e_i}) = \eta(r) + \ideal{1-\eta(e_i)}$. Since $\vartheta_i(\eta( r)) = \eta(r) + \ideal{1-\eta(e_i)}$ we have that $\vartheta_i \eta=\eta_{e_i} \varphi_i$.

\item If $S = \{\varphi: R \rightarrow R[r]\}$ with $R[r]=R[X]/\ideal{p}$ and $p\in R[X]$ monic, non-constant, and separable. Since $s p + t p' =1$,  we have $\eta(s) \eta(p) + \eta(t) \eta(p') = \eta(s) \eta(p) + \eta(t) \eta(p)' = 1$. Then $q = \eta(p)\in A[X]$ is separable. Let $A[a]=A[X]/\ideal{q}$. We have an elementary cocover $\{\vartheta:A \rightarrow A[a]\} \in \op{\bJ}(A)$ where $\vartheta$ is the canonical embedding. Let $\zeta:R[r] \rightarrow A[a]$ be the $K$-homomorphism such that $\zeta|_R = \eta$ and $\zeta(r) = a$. For $b \in R$, we have $\vartheta(\eta(b)) = \zeta(\varphi(b))$.

\end{enumerate}
\end{proof}

\begin{lem}
\label{splitfamilycompatible}
Let $\bP:\ra \rightarrow \catset$ be a presheaf on $\raop$ such that $\bP(0) = 1$. Let $R$ be an object of $\ra$ and let $(e_i)_{i\in I}$ be a fundamental system of orthogonal idempotents of $R$. For each $i\in I$, let $R_i = R/\ideal{1-e_i}$ and let $\varphi_i:R \rightarrow R_i$ be the canonical homomorphism. Any family $\{s_i\in \bP(R_i)\}$ is compatible.
\end{lem}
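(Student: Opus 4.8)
The plan is to unfold the definition of compatibility in the site $(\raop,\bJ)$ and then exploit the orthogonality of the idempotents $(e_i)$. The family under scrutiny is the dual of the elementary cocover $\{R \xrightarrow{\varphi_i} R_i\}_{i\in I}$, so after dualizing, a compatibility test consists of an object $B$ of $\ra$ together with two $K$-homomorphisms $u\colon R_\ell \to B$ and $v\colon R_j \to B$ satisfying $u\varphi_\ell = v\varphi_j$ as maps $R\to B$; what must be shown is that $\bP(u)(s_\ell)=\bP(v)(s_j)$ in $\bP(B)$. I would first record this reformulation carefully, since the one genuinely error-prone point is the bookkeeping of the duality---remembering that a morphism of the covering family corresponds to the quotient $\varphi_i$ in $\ra$, and that restriction of a section along a morphism of $\raop$ is the action of $\bP$ on the corresponding morphism of $\ra$.

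With the reformulation in hand I would split into the cases $\ell=j$ and $\ell\neq j$. For $\ell=j$ the map $\varphi_\ell\colon R\to R_\ell$ is the canonical surjection onto $R/\ideal{1-e_\ell}$, hence an epimorphism; thus $u\varphi_\ell=v\varphi_\ell$ forces $u=v$, and the desired equality $\bP(u)(s_\ell)=\bP(v)(s_\ell)$ is immediate.

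The case $\ell\neq j$ is the heart of the argument. Writing $\psi=u\varphi_\ell=v\varphi_j\colon R\to B$, the fact that $\varphi_\ell$ annihilates $1-e_\ell$ gives $\psi(e_\ell)=1_B$, and symmetrically $\psi(e_j)=1_B$. Orthogonality, $e_\ell e_j=0$, then yields $1_B=\psi(e_\ell)\psi(e_j)=\psi(e_\ell e_j)=0$, so $B$ is the trivial ring. By the hypothesis $\bP(0)=1$ the set $\bP(B)\cong\bP(0)$ is then a singleton, and $\bP(u)(s_\ell)$ and $\bP(v)(s_j)$ coincide for lack of any alternative.

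I expect no real obstacle beyond the duality bookkeeping mentioned above: once the test data are correctly translated into $\ra$, the decisive observation is simply that two distinct members of a fundamental system of orthogonal idempotents can be simultaneously sent to $1$ only in the trivial algebra, and the assumption $\bP(0)=1$ does the rest.
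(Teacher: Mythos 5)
Your proposal is correct and follows essentially the same route as the paper's own proof: reduce to a compatibility test $u\varphi_\ell = v\varphi_j$, handle $\ell=j$ by surjectivity of $\varphi_\ell$, and for $\ell\neq j$ derive $1=0$ in $B$ from orthogonality of the idempotents, so that $\bP(B)$ is a singleton. The only (immaterial) difference is how the contradiction is arranged: you compute $1=\psi(e_\ell)\psi(e_j)=\psi(e_\ell e_j)=0$ directly, while the paper first notes $\varphi_j(e_i)=0$ and then evaluates $\vartheta(\varphi_i(e_i))=\zeta(\varphi_j(e_i))$.
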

\begin{proof}
Let $B$ be an object and for some $i,j\in I$ let $\vartheta:R_i\rightarrow B$ and $\zeta:R_j \rightarrow B$ be such that $\vartheta \varphi_i = \zeta \varphi_j$. We will show that $\bP(\vartheta)(s_i)  = \bP(\zeta)(s_j)$.
\begin{enumerate}[label=(\roman*.),align=left]
\item If $i=j$, then since $\varphi_i$ is surjective we have $\vartheta = \zeta$ and $\bP(\vartheta) = \bP(\zeta)$.

\item If $i\neq j$, then since $e_i e_j = 0$, $\varphi_i(e_i) = 1$ and $\varphi_j(e_j) = 1$ we have $\varphi_j(e_i) = \varphi_j(e_i e_j) =  0$. But then 
\begin{align*}
1 = \vartheta(1) = \vartheta(\varphi_i(e_i)) = \zeta(\varphi_j(e_i)) = \zeta(0) = 0
\end{align*}
Hence $B$ is the trivial algebra $0$. By assumption $\bP(0) = 1$, hence $\bP(\vartheta)(s_i) = \bP(\zeta)(s_j) = \ast$. \qedhere
\end{enumerate}
\end{proof}

\begin{cor}
\label{sheafofprodequalprodofsheaf}
Let $\F$ be a sheaf on $(\raop,\bJ)$. Let $R$ be an object of $\ra$ and $(e_i)_{i\in I}$ a fundamental system of orthogonal idempotents of $R$. Let $R_i = R/\ideal{1-e_i}$ and $\varphi_i:R\rightarrow R_i$ be the canonical homomorphism. The map $f:\F(R) \rightarrow \prod_{i\in I} \F(R_i)$ such that $f(s)=(\F(\varphi_i) s)_{i \in I}$ is an isomorphism.
\end{cor}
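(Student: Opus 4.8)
The plan is to read the statement off directly from the sheaf axiom applied to the elementary cover attached to the given fundamental system of orthogonal idempotents, after first securing the hypothesis needed to invoke Lemma \ref{splitfamilycompatible}.

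First I would verify that $\F(0) = 1$. By the remark following Definition \ref{top}, the empty family is an elementary cocover of the trivial algebra $0$, so dually the empty family lies in $\bJ(0)$, i.e. it is an elementary cover of $0$ in $\raop$. The empty family is vacuously compatible, so the sheaf condition supplies a unique amalgamation in $\F(0)$ subject to no constraints at all; this says precisely that $\F(0)$ is a singleton, that is, $\F(0) = 1$. With this in hand, Lemma \ref{splitfamilycompatible} applies to $\F$, and every family $\{s_i \in \F(R_i)\}_{i\in I}$ is compatible.

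Next I would note that $\{R \xrightarrow{\varphi_i} R_i\}_{i\in I} \in \op{\bJ}(R)$ by Definition \ref{top}.\ref{genDecomp}, so that $\{R_i \to R\}_{i\in I} \in \bJ(R)$ is an \emph{elementary} cover in $\raop$ and the sheaf condition is available directly, without passing through the closure $\bJ^\ast$. Since $\F$ is a sheaf, for each compatible family $\{s_i\}_{i\in I}$ there is a unique $s \in \F(R)$ with $\F(\varphi_i)(s) = s_i$ for every $i\in I$. Unwinding this statement: surjectivity of $f$ is the existence half together with the previous paragraph (every element of $\prod_{i\in I}\F(R_i)$ is a compatible family, hence equals $f(s)$ for some $s$), while injectivity of $f$ is the uniqueness half. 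A bijection in $\catset$ is an isomorphism, which is the claim.

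I do not expect a genuine obstacle here, since the content is entirely packaged by Lemma \ref{splitfamilycompatible} and the sheaf axiom; the work is really bookkeeping. The points that require care are checking that the decomposition family is an elementary cover (so that the sheaf condition applies verbatim), keeping the variance straight between $\varphi_i : R \to R_i$ in $\ra$ and its dual $R_i \to R$ in $\raop$, and confirming $\F(0) = 1$ so that the compatibility lemma can be applied to the arbitrary family $\{s_i\}_{i\in I}$.
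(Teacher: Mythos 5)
Your proof is correct and follows essentially the same route as the paper: establish $\F(0)=1$, invoke Lemma \ref{splitfamilycompatible} to see that every family $\{s_i\in\F(R_i)\}_{i\in I}$ is compatible, and then read bijectivity of $f$ off the existence and uniqueness clauses of the sheaf condition for the decomposition cover. The only difference is that you spell out why $\F(0)=1$ (via the empty cover of the trivial algebra), which the paper simply asserts; this is a harmless and welcome addition.
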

\begin{proof}
Since $\F(0) = 1$, by Lemma \ref{splitfamilycompatible} any family $\{s_i \in \F(R_i)\}_{ i \in I}$ is compatible. Since $\F$ is a sheaf, the family $\{s_i \in \F(R_i)\}_{i \in I}$ has a unique amalgamation $s \in \F(R)$ with restrictions $s\varphi_i = s_i$.  The isomorphism is given by $f s = (s \varphi_i)_{i\in I}$. We can then use the tuple notation $(s_i)_{i\in I}$ to denote the element $s$ in $\F(R)$.
\end{proof}

One say that a polynomial $f \in R[X]$ has a \emph{formal degree} $n$ if $f$ can be written as $f = a_n X^n + ... + a_0$ which is to express that for any $m > n$ the coefficient of $X^m$ is known to be $0$. 

\begin{lem}
\label{separablehasnonequalroots}
Let $R$ be a regular ring and $p_1,p_2 \in R[X]$ be monic polynomials of degrees $n_1$ and $n_2$ respectively. Let $R[a,b]=R[X,Y]/\ideal{p_1(X),p_2(Y)}$. Let $q_1,q_2 \in R[Z]$ be of formal degrees $m_1 < n_1$ and $m_2 < n_2$ respectively. If $q_1(a) = q_2(b)$ then $q_1 =  q_2 = r \in R$.
\end{lem}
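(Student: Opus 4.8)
The plan is to exploit the fact that, because $p_1$ and $p_2$ are monic, $R[a,b]$ is a free $R$-module on the products of low powers of $a$ and $b$, and then to read off the conclusion by comparing coefficients in that basis.

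First I would record the module structure. Since $p_1$ is monic of degree $n_1$, the quotient $R[a]=R[X]/\ideal{p_1(X)}$ is a free $R$-module with basis $1,a,\dots,a^{n_1-1}$, because division by a monic polynomial always terminates and leaves a unique remainder. Viewing $p_2(Y)$ as a monic polynomial of degree $n_2$ over $R[a]$, the same argument shows $R[a,b]=R[a][Y]/\ideal{p_2(Y)}$ is free over $R[a]$ with basis $1,b,\dots,b^{n_2-1}$. Composing the two, $R[a,b]$ is a free $R$-module with basis $\{a^i b^j : 0\le i<n_1,\ 0\le j<n_2\}$; in particular every element of $R[a,b]$ has a \emph{unique} expansion in these basis elements.

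Next I would feed in the formal-degree hypotheses. Writing $q_1=\sum_{i=0}^{m_1} c_i Z^i$ with $m_1<n_1$, the element $q_1(a)=\sum_{i=0}^{m_1} c_i a^i$ is \emph{already} a linear combination of the basis vectors $a^i=a^i b^0$, with no reduction modulo $p_1$ required; this is exactly the point at which $m_1<n_1$ is used, and it is what lets us avoid deciding constructively whether any leading coefficient vanishes. Likewise $q_2(b)=\sum_{j=0}^{m_2} d_j b^j$ is a combination of the basis vectors $b^j=a^0 b^j$ with $m_2<n_2$.

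Finally I would compare coefficients in $q_1(a)=q_2(b)$. On the left only basis vectors of the form $a^i b^0$ with $0\le i\le m_1$ occur, on the right only $a^0 b^j$ with $0\le j\le m_2$, and the sole basis element common to the two lists is $a^0 b^0=1$. By uniqueness of the expansion this forces $c_i=0$ for $1\le i\le m_1$, $d_j=0$ for $1\le j\le m_2$, and $c_0=d_0$. Hence $q_1=c_0=d_0=q_2$, and taking $r=c_0\in R$ yields the claim. I expect no real obstacle: the entire content is the freeness of $R[a,b]$ on the product basis together with the observation that the formal-degree bounds keep both sides in reduced form, after which the result is bookkeeping with a basis.
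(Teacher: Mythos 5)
Your proof is correct and takes essentially the same route as the paper, which simply invokes the linear independence of the $R$-basis elements $a^i$ ($i>0$) and $b^j$ ($j>0$) in $R[a,b]$; you have merely spelled out the freeness of $R[a,b]$ on the product basis and the coefficient comparison that the paper leaves implicit.
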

\begin{proof}
The statement follows immediately since the $R$-basis $a^i, i>0$ and $b^j, j>0$ are linearly independent.
\end{proof}

\begin{cor}
\label{pushoutequalizerofextension}
Let $R$ be an object of $\ra$ and $p \in R[X]$ separable and monic. Let $R[a] = R[X]/\ideal{p}$ and $\varphi:R\rightarrow R[a]$ the canonical morphism. Let $R[b,c] = R[X,Y]/\ideal{p(X),p(Y)}$. The commuting diagram

\begin{tikzpicture}[baseline=(current bounding box.west)]
  \node[outer sep=0] {\begin{tikzcd}
R[a] \arrow{r}{\vartheta}                       & R[b,c] \\
R \arrow{u}{\varphi} \arrow{r}{\varphi} & R[a] \arrow{u}{\zeta} 
\end{tikzcd}};\end{tikzpicture}
$\vartheta|_R = \zeta|_R = 1_R$, $\vartheta(a) = b$, $\zeta(a) = c$

is a pushout diagram of $\ra$. Moreover, $\varphi$ is the equalizer of $\zeta$ and $\vartheta$.
\end{cor}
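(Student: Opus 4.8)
The plan is to establish the two assertions in turn: that the square is a pushout, and then that $\varphi$ is the equalizer of $\zeta$ and $\vartheta$. The second is where the genuine content lies, while the first is essentially a coproduct-over-$R$ computation.

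\textbf{The pushout.} First I would check that $R[b,c]$ is indeed an object of $\ra$. Because $p$ is separable over $R$, a witnessing identity $rp+sp'=1$ transports along $\varphi$, so $p$ stays separable when its coefficients are pushed into $R[a]$; thus Lemma \ref{extension} applies first to $R\rightarrow R[a]$ and then to $R[a]\rightarrow R[a][Y]/\ideal{p(Y)}\cong R[b,c]$, showing $R[b,c]$ is regular, and it is clearly finitely presented. I would then verify the universal property directly (the object $R[b,c]$ is just the tensor product $R[a]\otimes_R R[a]$, which is the guiding intuition). Given $u,v:R[a]\rightarrow S$ in $\ra$ with $u\varphi=v\varphi=:\gamma$, define $w:R[b,c]\rightarrow S$ by $w|_R=\gamma$, $w(b)=u(a)$ and $w(c)=v(a)$. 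This respects the defining relations $p(b)=p(c)=0$ because $u(p(a))=v(p(a))=0$ and $u,v$ agree with $\gamma$ on the coefficients of $p$; moreover $w\vartheta=u$ and $w\zeta=v$, and $w$ is forced on the generators $R\cup\{b,c\}$, giving uniqueness.

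\textbf{The equalizer.} Commutativity $\zeta\varphi=\vartheta\varphi$ is already recorded, both composites being the canonical map $R\rightarrow R[b,c]$. Since $p$ is monic of degree $n$, $R[a]$ is free over $R$ on $1,a,\dots,a^{n-1}$, so $\varphi$ is injective and every $x\in R[a]$ has a unique expression $x=q(a)$ with $q$ of formal degree $<n$. For the universal property, suppose $g:S\rightarrow R[a]$ satisfies $\zeta g=\vartheta g$. For $s\in S$ write $g(s)=q(a)$ with $q$ of formal degree $<n$; then $\vartheta(g(s))=q(b)$ and $\zeta(g(s))=q(c)$, so the hypothesis reads $q(b)=q(c)$ in $R[b,c]$. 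Invoking Lemma \ref{separablehasnonequalroots} with $p_1=p_2=p$ and $q_1=q_2=q$ forces $q=r\in R$, so that $g(s)=\varphi(r)$ lies in $\varphi(R)$. Hence $g$ factors uniquely as $\varphi h$ with $h=\varphi^{-1}g:S\rightarrow R$, a $K$-homomorphism since $\varphi$ is an injective homomorphism, and $h$ is unique because $\varphi$ is monic.

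The hard part is the equalizer direction, specifically showing that the subobject of $R[a]$ on which $\zeta$ and $\vartheta$ agree is exactly $\varphi(R)$ and no larger. Everything reduces to rewriting the agreement condition as the single equation $q(b)=q(c)$ between expressions of formal degree $<n$ in the two distinct roots $b,c$, and then reading off from the linear independence in Lemma \ref{separablehasnonequalroots} that $q$ must be a constant from $R$. By comparison the pushout direction is routine, needing only the standard tensor-product identification and the two applications of Lemma \ref{extension} that keep the construction inside $\ra$.
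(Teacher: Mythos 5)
Your proposal is correct and follows essentially the same route as the paper: the pushout is verified by defining the mediating map on generators (the tensor-product picture), and the equalizer claim is reduced, exactly as in the paper, to Lemma \ref{separablehasnonequalroots} showing that any element of $R[a]$ on which $\vartheta$ and $\zeta$ agree must lie in $R$, after which injectivity of $\varphi$ gives the unique factorization. The extra care you take in checking that $R[b,c]$ is an object of $\ra$ via two applications of Lemma \ref{extension} is a detail the paper leaves implicit, but it does not change the argument.
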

\begin{proof}
Let \begin{tikzpicture}[inner sep=0,baseline=-0.5ex]
  \node[outer sep=0] {\begin{tikzcd}[column sep=small] {R[a]\;} \arrow[shift left=.75ex]{r}{\eta} \arrow[shift right=.75ex]{r}[below]{\varrho} & {\;B} \end{tikzcd}};\end{tikzpicture} be morphisms of $\ra$ such that $\eta \varphi = \varrho \varphi$. Then for all $r\in R$ we have $\eta(r) = \varrho(r)$. Let $\gamma:R[b,c] \rightarrow B$ be the homomorphism such that $\gamma(r) = \eta(r) =\varrho(r)$ for all $r \in R$ while $\gamma(b) = \eta(a), \gamma(c) = \varrho(a)$. Then $\gamma$ is the unique map such that $\gamma \vartheta = \eta$ and $\gamma  \zeta = \varrho$.
 
Let $A$ be an object of $\ra$ and let $\varepsilon : A \rightarrow R[a]$ be a map such that $\zeta \varepsilon = \vartheta \varepsilon$. By Lemma \ref{separablehasnonequalroots} if for some $f \in R[a]$ one has $\zeta(f) = \vartheta(f)$ then $f \in R$ (i.e. $f$ is of degree $0$ as a polynomial in $a$ over $R$). Thus $\varepsilon(A) \subset R$ and we can factor $\varepsilon$ uniquely (since $\varphi$ is injective) as $\varepsilon = \varphi \mu$ with $\mu:A\rightarrow R$.
\end{proof}

Let $\{\varphi: R \rightarrow R[a]\}$ be a singleton elementary cocover. Since one can form the pushout of $\varphi$ with itself, the compatibility condition on a singleton family $\{s \in \F(R[a])\}$ can be simplified as: Let \begin{tikzpicture}[inner sep=0, baseline=-0.6ex]
  \node[outer sep=0] {\begin{tikzcd}[column sep=small]{R\;} \arrow{r}{\varphi} & {\;R[a]\;}\arrow[shift left=.75ex]{r}{\eta} \arrow[shift right=.75ex,swap]{r}{\vartheta} & {\;A}\end{tikzcd}};\end{tikzpicture} be a pushout diagram. A family $\{s \in \F(R[a])\}$ is compatible if and only if $s \vartheta = s \eta$.

\begin{cor}
The coverage $\bJ$ is subcanonical, i.e. all representable presheaves in $\catset^{\ra}$ are sheaves on $(\raop, \bJ)$.\qed
\end{cor}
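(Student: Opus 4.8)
The plan is to observe that, by the definition of sheaf adopted above, it suffices to verify the amalgamation condition against the two kinds of elementary covers of Definition \ref{top}, for an arbitrary representable presheaf. So fix an object $B$ of $\ra$ and consider the representable presheaf $\y(B) = \Hom_{\ra}(B,-) \colon \ra \to \catset$, whose value at $R$ is $\Hom_{\ra}(B,R)$ and whose restriction along a morphism $g\colon R \to R'$ of $\ra$ sends $u\colon B \to R$ to $g u$. Since $0$ is terminal in $\ra$, we have $\y(B)(0) = \Hom_{\ra}(B,0) = 1$, so Lemma \ref{splitfamilycompatible} applies to $\y(B)$ and every family indexed by a decomposition cover is automatically compatible.

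For a cover of type \ref{top}.\ref{genDecomp}, coming from a fundamental system of orthogonal idempotents $(e_i)_{i\in I}$ of $R$ with $R_i = R/\ideal{1-e_i}$ and $\varphi_i\colon R \to R_i$, the plan is to read off the amalgamation from the ring decomposition $R \cong \prod_{i\in I} R_i$ established earlier. Since this exhibits $R$ together with the $\varphi_i$ as the product of the $R_i$ in $\ra$, the universal property of products makes $u \mapsto (\varphi_i u)_{i\in I}$ a bijection $\Hom_{\ra}(B,R) \isoarrow \prod_{i\in I}\Hom_{\ra}(B,R_i)$; hence any (automatically compatible) family $\{u_i \in \y(B)(R_i)\}_{i\in I}$ has a unique amalgamation $u\colon B \to R$ with $\varphi_i u = u_i$. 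This simultaneously settles the empty cover of the trivial algebra $0$, which is the instance $I = \emptyset$.

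For a cover of type \ref{top}.\ref{genEmbed}, a singleton $\{\varphi\colon R \to R[a]\}$ coming from a separable extension, the plan is to invoke Corollary \ref{pushoutequalizerofextension}. By the remark preceding that corollary, a singleton family $\{s = u \in \y(B)(R[a])\}$, with $u\colon B \to R[a]$, is compatible precisely when $\vartheta u = \zeta u$, where $\vartheta,\zeta\colon R[a] \to R[b,c]$ are the two coprojections of the self-pushout of $\varphi$. Corollary \ref{pushoutequalizerofextension} states exactly that $\varphi$ is the equalizer of $\vartheta$ and $\zeta$, so such a $u$ factors uniquely as $u = \varphi v$ through a morphism $v\colon B \to R$; this $v$ is the required unique amalgamation, with restriction $\varphi v = u = s$.

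Assembling the two cases shows that $\y(B)$ satisfies the sheaf condition for every elementary cover of Definition \ref{top}, hence is a sheaf, so $\bJ$ is subcanonical. I expect no serious obstacle here: the real content has already been packaged into the ring-theoretic product decomposition and into the equalizer statement of Corollary \ref{pushoutequalizerofextension}. The only care needed is bookkeeping of variance, namely that a presheaf on $\raop$ is a covariant functor on $\ra$, so that the compatibility and amalgamation conditions line up with the universal properties of products and equalizers in $\ra$ itself rather than with their duals.
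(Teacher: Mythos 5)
Your proof is correct and is exactly the argument the paper intends: the corollary is stated without proof precisely because it follows from the ingredients assembled immediately beforehand, namely the product decomposition along a fundamental system of orthogonal idempotents (giving $\Hom_{\ra}(B,R)\cong\prod_i\Hom_{\ra}(B,R_i)$) and the equalizer statement of Corollary \ref{pushoutequalizerofextension} together with the remark reducing singleton compatibility to $\vartheta u=\zeta u$. Your bookkeeping of variance and the observation that $\y(B)(0)=1$ (so Lemma \ref{splitfamilycompatible} applies) are the right details to check, and they all go through.
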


\section{The algebraically closed field extension}
\label{algclosed}
We define the presheaf $\F: \ra \rightarrow \catset$ to be the forgetful functor. That is, for an object $A$ of $\ra$, $\F(A) = A$ and for a morphism $\varphi:A\rightarrow C$ of $\ra$, $\F(\varphi) = \varphi$. 
%

\begin{lem}
\label{isasheaf}
$\F$ is a sheaf of sets on the site $(\raop,\bJ)$
\end{lem}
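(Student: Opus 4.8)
The plan is to verify the sheaf axiom of the definition directly. Since that axiom quantifies only over the elementary covering families in $\bJ(A)$ (not over their closure $\bJ^\ast$), it suffices to treat the two kinds of cocover listed in Definition \ref{top}. Before doing so I would record the one standing hypothesis needed elsewhere: $\F(0)=1$, because the forgetful functor sends the terminal (trivial) algebra $0$ to the one-element set. This makes Lemma \ref{splitfamilycompatible} available for $\F$.

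For a decomposition cover \ref{top}.\ref{genDecomp}, fix a fundamental system of orthogonal idempotents $(e_i)_{i\in I}$ of $R$, write $R_i = R/\ideal{1-e_i}$, and let $\varphi_i : R \to R_i$ be the canonical maps. By Lemma \ref{splitfamilycompatible} every family $\{s_i \in \F(R_i)\}_{i\in I}$ is automatically compatible, so only existence and uniqueness of an amalgamation remain. For these I would invoke the product decomposition $R \cong \prod_{i\in I} R_i$ afforded by the fundamental system of orthogonal idempotents: under the forgetful functor this isomorphism is literally the map $s \mapsto (\varphi_i(s))_{i\in I} = (\F(\varphi_i)(s))_{i\in I}$, hence a bijection of underlying sets. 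Its inverse sends a prescribed family $(s_i)_{i\in I}$ to the unique $s\in \F(R)$ with $\F(\varphi_i)(s)=s_i$ for all $i$, which is exactly the required unique amalgamation. I would deliberately \emph{not} cite Corollary \ref{sheafofprodequalprodofsheaf} here, since that corollary already presupposes that $\F$ is a sheaf, and would make the argument circular.

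For a separable-extension cover \ref{top}.\ref{genEmbed}, the cover is the singleton $\{\varphi : R \to R[a]\}$ with $R[a]=R[X]/\ideal{p}$ and $p$ monic, non-constant and separable. Here I would use the simplified compatibility criterion for singleton families: forming the pushout of $\varphi$ with itself, which by Corollary \ref{pushoutequalizerofextension} is $R[b,c]=R[X,Y]/\ideal{p(X),p(Y)}$ with structure maps $\vartheta,\zeta : R[a] \to R[b,c]$, an element $s \in \F(R[a]) = R[a]$ is compatible iff $\vartheta(s)=\zeta(s)$. The same corollary states that $\varphi$ is the equalizer of $\vartheta$ and $\zeta$; concretely, by Lemma \ref{separablehasnonequalroots} the elements of $R[a]$ on which the two maps agree are exactly those of degree $0$ in $a$, i.e. the image $\varphi(R)$. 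Hence a compatible $s$ lies in $\varphi(R)$, and since $\varphi$ is injective there is a unique $r\in R$ with $\F(\varphi)(r)=\varphi(r)=s$; this $r$ is the unique amalgamation.

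With both elementary cases settled, $\F$ satisfies the sheaf axiom for every family in $\bJ(A)$ and is therefore a sheaf. I expect the decomposition case to be essentially immediate once the product isomorphism is in hand, so the real content sits in the separable case; but its difficulty has already been isolated into Corollary \ref{pushoutequalizerofextension} and Lemma \ref{separablehasnonequalroots}. The remaining obstacle is therefore mostly bookkeeping: reading the forgetful functor's restriction maps correctly through the contravariant identification of presheaves on $\raop$ (so that $\F(\varphi)=\varphi$ on underlying sets), and making sure the singleton-compatibility reduction is applied to the pushout of Corollary \ref{pushoutequalizerofextension}.
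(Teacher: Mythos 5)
Your proof is correct and follows essentially the same route as the paper: Lemma \ref{splitfamilycompatible} plus the product decomposition $R \cong \prod_i R/\ideal{1-e_i}$ for the idempotent case, and the pushout/equalizer of Corollary \ref{pushoutequalizerofextension} for the separable-extension case. The remark about avoiding circularity with Corollary \ref{sheafofprodequalprodofsheaf} is a sensible precaution, and the paper indeed argues directly from the ring isomorphism just as you do.
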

\begin{proof}
By case analysis on the clauses of Definition \ref{top}. 
\begin{enumerate}[label=(\roman*.),align=left]
\item Let $\{R \xrightarrow{\varphi_i} R/\ideal{1-e_i}\}_{i\in I} \in \op{\bJ}(R)$, where $(e_i)_{i\in I}$ is fundamental system of orthogonal idempotents of $R$.  The presheaf $\F$ has the property $\F(0) = 1$. By Lemma \ref{splitfamilycompatible} a family $\{a_i \in R/\ideal{1-e_i}\}_{ i \in I}$ is a compatible family. By the isomorphism $R \xrightarrow{(\varphi_i)_{i\in I}} \prod_{i\in I} R/\ideal{1-e_i}$ the element $a=(a_i)_{i\in I} \in R$ is the unique element such that $\varphi_i (a) = a_i$.

\item Let $\{R\xrightarrow{\varphi}R[a]\}\in \op{\bJ}(R)$ where $R[a] = R[X]/\ideal{p}$ with $p\in R[X]$ monic, non-constant and separable polynomial. Let $\{r \in  R[a]\}$ be a compatible family. Let \begin{tikzpicture}[inner sep =0, baseline=-0.35ex]
  \node[outer sep=0] {\begin{tikzcd}[column sep=small] {R\;}\arrow{r}{\varphi} & {\;R[a]\;} \arrow[shift left=.75ex]{r}{\vartheta} \arrow[shift right=.75ex]{r}[below]{\zeta} & {\;R[b,c]} \end{tikzcd}};\end{tikzpicture} be the pushout diagram of Corollary \ref{pushoutequalizerofextension}. Compatibility then implies $\vartheta(r) = \zeta(r)$ which by the same Corollary is true only if the element $r$ is in $R$. We then have that $r$ is the unique element restricting to itself along the embedding $\varphi$. \qedhere
\end{enumerate}
\end{proof}

We fix a field $K$ of characteristic $0$. Let $\mc{L}[F,+,.]$ be a language with basic type $F$ and function symbols $+,.:F\times F \rightarrow F$. We extend $\mc{L}[F,+,.]$ by adding a constant symbol of type $F$ for each element $a\in K$, to obtain $\mc{L}[F,+,.]_K$. Define $\Diag(K)$ as : if $\phi$ is an atomic $\mc{L}[F,+,.]_K$-formula or the negation of one such that $K\models \phi(a_1,...,a_n)$ then $\phi(a_1,...,a_n) \in \Diag(K)$. The theory $T$ equips the type $F$ with axioms of the geometric theory of algebraically closed field containing $K$

\begin{defn}
\label{ACFaxioms} The theory $T$ has the following sentences (with all the variables having the type $F$).
\begin{enumerate}
\item $\Diag(K)$.
\item The axioms of a commutative group:
   \begin{enumerate*}
    \item $\forall x\; [0 + x = x + 0 = x]$
    \item $\forall x \forall y \forall z [x + (y + z) = (x + y) + z]$
    \item $\forall x \exists y [x+y = 0]$
    \item $\forall x \forall y[x + y = y + x]$
   \end{enumerate*}
\item The axioms of a commutative ring:
    \begin{enumerate*}
    \item $\forall x\; [x 1 = x]$
    \item $\forall x\; [x 0 = 0]$
    \item $\forall x \forall y [x y = y x]$\\
    \item $\forall x \forall y \forall z[x(yz) = (xy)z]$
    \item $\forall x \forall y \forall z [x(y+z) = xy + xz]$
    \end{enumerate*}
\item The field axioms:
   \begin{enumerate*}
   \item $1 \neq 0$.
   \item $\forall x[x = 0 \lor \exists y[x y = 1]]$.
    \end{enumerate*}
\item The axiom schema for algebraic closure:
$\forall a_1\dots\forall a_n \exists x[x^n + \sum_{i=1}^{n} x^{n-i} a_i = 0]$.
\item $F$ is algebraic over $K$: $\forall x [\bigvee_{p\in K[Y]} p(x) = 0]$.
\end{enumerate} 
\end{defn}

With these axioms the type $F$ becomes the type of an algebraically closed field containing $K$. We proceed to show that with the interpretation of the type $F$ by the object $\F$ the topos $\Sh(\raop, \bJ)$ is a model of $T$, i.e. $\F$ is a model, in Kripke--Joyal semantics, of an algebraically closed field containing of $K$. First note that since there is a unique map $K \rightarrow C$ for any object $C$ of $\ra$, an element $a \in K$ gives rise to a unique map $\mathbf{1} \xrightarrow{a} \F$, that is the map $\ast \mapsto a \in \F(K)$. Every constant $a \in K$ of the language is then interpreted by the corresponding unique arrow $\mathbf{1} \xrightarrow{a} \F$. (we use the same symbol for constants and their interpretation to avoid cumbersome notation). That $\F$ satisfies $\Diag(K)$ then follows directly.

\begin{lem}
\label{isring}
$\F$ is a ring object.
\end{lem}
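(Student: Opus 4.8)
The plan is to equip $\F$ with the evident pointwise ring structure and then to reduce the verification of the ring-object axioms to the fact, already available to us, that each object $A$ of $\ra$ is a commutative ring. The essential structural observation is that finite limits of sheaves are computed exactly as in the presheaf category $\catset^{\ra}$ (the inclusion $\Sh(\raop,\bJ) \hookrightarrow \catset^{\ra}$ preserves limits), so that products and the terminal object are pointwise: $(\F \times \F)(A) = \F(A) \times \F(A) = A \times A$ and $\mathbf{1}(A) = 1$ for every object $A$.

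First I would define the structure maps. I take addition and multiplication to be the natural transformations $+,\,\cdot : \F \times \F \rightarrow \F$ whose component at $A$ is the ring addition, respectively multiplication, of the algebra $A$; the additive inverse to be the natural transformation $\F \rightarrow \F$ with component $a \mapsto -a$; and the constants $0,1 : \mathbf{1} \rightarrow \F$ to be the maps selecting $0_A$ and $1_A$ at each $A$. Naturality of each of these is immediate: a morphism $\varphi : A \rightarrow B$ of $\ra$ is by definition a $K$-algebra homomorphism, hence satisfies $\varphi(a+a') = \varphi(a)+\varphi(a')$, $\varphi(aa') = \varphi(a)\varphi(a')$, $\varphi(-a) = -\varphi(a)$, $\varphi(0_A)=0_B$ and $\varphi(1_A)=1_B$, which are precisely the commuting squares expressing that these assignments are natural in $A$.

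Finally I would verify the ring-object axioms, namely associativity and commutativity of $+$ and $\cdot$, distributivity, the unit laws and the inverse law. Each such axiom is the commutativity of a diagram built from the structure maps in $\Sh(\raop,\bJ)$. Because the sheaf topos is a full subcategory of $\catset^{\ra}$ closed under finite limits, such a diagram commutes exactly when it commutes at every object $A$, and the products occurring in it are the pointwise ones; at $A$ the diagram then reduces to the corresponding commutative-ring identity in $A$, which holds since $A$ is a commutative $K$-algebra. I do not expect a genuine obstacle here: the only point requiring care is the reduction to pointwise checking, i.e. recording that $\F \times \F$ and $\mathbf{1}$ are the pointwise product and terminal object so that the equational axioms, being both preserved and reflected componentwise, need only be checked separately in each ring $A$.
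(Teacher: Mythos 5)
Your proposal is correct and is essentially the paper's argument: the paper's entire proof is the one-line observation that $\F(C)$ is a commutative ring for each object $C$ of $\ra$, leaving implicit exactly the points you spell out (pointwise products in the sheaf topos, naturality from the morphisms being $K$-algebra homomorphisms, componentwise verification of the axioms). Your version just makes those routine details explicit.
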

\begin{proof}
For an object $C$ of $\ra$ the object $\F(C)$ is a commutative ring.
\end{proof}

\begin{lem}
$\F$ is a field.
\end{lem}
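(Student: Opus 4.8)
The plan is to verify that $\F$, equipped with the ring structure from Lemma~\ref{isring}, forces the two field axioms in item (4) of Definition~\ref{ACFaxioms}, namely $1 \neq 0$ and $\forall x[x = 0 \lor \exists y(xy=1)]$. Since both are closed formulas, a formula is valid in the model iff it is forced at every object of the site (equivalently, at the terminal object $K$ of $\raop$, by monotonicity~(M)); and for the universally quantified axiom the $\forall$-clause already unfolds into a check over all objects and all elements. So in effect I must show, for every object $R$ of $\ra$ and every $a \in \F(R) = R$, that $R \forces (a = 0 \lor \exists y(ay=1))$, together with the easier nondegeneracy axiom.

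First I would dispatch $1 \neq 0$, i.e. $(1 = 0) \Rightarrow \bot$. By the forcing clause for $\Rightarrow$ it suffices to show, for every $f:D \to R$, that $D \forces 1 = 0$ implies $D \forces \bot$. But (after the constant-interpretation step, which turns $1$ and $0$ into the corresponding elements of $\F(D)=D$) $D \forces 1 = 0$ means $1 = 0$ holds in the ring $D$, which forces $D$ to be the trivial ring $0$. By \ref{top}.\ref{genDecomp} applied to the empty fundamental system of orthogonal idempotents, the empty family cocovers $0$, which is exactly the condition $D \forces \bot$. Hence $1 \neq 0$ is forced at every object.

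The heart of the argument is the second axiom, where von Neumann regularity does the work. Given $a \in R$, set $e = a a^\ast$, the idempotent with $\ideal{e} = \ideal{a}$, and take the fundamental system of orthogonal idempotents $(e,\, 1-e)$. By \ref{top}.\ref{genDecomp} this yields a cocover $\{R \to R/\ideal{1-e},\, R \to R/\ideal{e}\}$. On the component $R/\ideal{e}$ we have $e = 0$, and since $a = ea$ (from $a a^\ast a = a$) the image of $a$ is $0$, so this component forces $a = 0$. On the component $R/\ideal{1-e}$ we have $e = 1$, whence $a a^\ast = e = 1$; thus the image of $a^\ast$ is a multiplicative inverse of $a$, and this component forces $\exists y(ay = 1)$. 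By the forcing clause for $\lor$, the displayed cocover witnesses $R \forces (a = 0 \lor \exists y(ay=1))$, as required.

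I expect no genuine obstacle: the regular-ring splitting recorded in Section~\ref{site} is precisely what makes the disjunction locally decidable, and it is the crux of the proof. The only delicate points are bookkeeping ones — correctly reducing the forcing of $1 = 0$ to an equation in the ring $D$ via the constant-interpretation convention of Definition~\ref{forcing}, and reading the $\exists$-clause on $R/\ideal{1-e}$ as satisfied by the identity cocover together with the explicit witness $a^\ast$. Everything else reduces to the identities $ea = a$ and $a a^\ast = e$ for the quasi-inverse.
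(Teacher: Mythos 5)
Your proof is correct and follows essentially the same route as the paper: triviality of any ring forcing $1=0$ handles nondegeneracy, and the splitting along the fundamental system $(e,1-e)$ with $e=aa^\ast$ gives the cover witnessing the disjunction, with $a=0$ on $R/\ideal{e}$ and $a^\ast$ as the explicit inverse on $R/\ideal{1-e}$. The only cosmetic difference is that you read the $\lor$-clause directly off this two-element cover, whereas the paper first establishes the disjunction on each component separately before amalgamating; both are valid.
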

\begin{proof}
For any object $R$ of $\ra$ one has $R\forces 1\neq 0$ since for any $R\xrightarrow{\varphi} C$ such that $C \forces 1 = 0$ one has that $C$ is trivial and thus $C\forces \bot$. Next we show that for variables $x$ and $y$ of type $\F$ and any object $R$ of $\raop$ we have $R\forces \forall x\; [x = 0 \lor \exists y\; [xy = 1]]$. Let $\varphi: A \rightarrow R$ be a morphism of $\raop$ and let $a \in A$. We need to show that $A \forces a = 0 \lor \exists y [ya= 1]$.  The element $e=aa^*$ is an idempotent and we have a cover $\{\varphi_1:  A/\ideal{e}\rightarrow A, \varphi_2:  A/\ideal{1-e}\rightarrow A\} \in \bJ^\ast(A)$ with 
$A/\ideal{e} \forces a \varphi_1   = 0$ and 
$ A/\ideal{1-e} \forces (a \varphi_2) (a^* \varphi_2 ) = e \varphi_2 = 1$. 
Hence by \fbox{$\exists$} we have $A/\ideal{1-e} \forces \exists y [(a \varphi_2) y = 1]$ and by $\fbox{$\lor$}$, $A/\ideal{1-e} \forces a \varphi_2 = 0\lor \exists y [(a \varphi_2) y = 1]$. Similarly, $A/\ideal{e}\forces a \varphi_1 = 0 \lor \exists y [(a\varphi_1) y = 1]$. By \fbox{$\forall$} we get $R\forces \forall x\; [x = 0 \lor \exists y\; [xy = 1]]$.
\end{proof}

To show that $A \forces \forall a_1\dots\forall a_n \exists x\;[x^n + \sum_{i=1}^{n} x^{n-i} a_i = 0]$ for every $n$, we need to be able to extend an algebra $R$ of $\ra$ with the appropriate roots. We need the following lemma.

\begin{lem}
\label{squarefree}
Let $L$ be a field and $f \in L[X]$ a monic polynomial. Let $g = \langle f,f'\rangle$, where $f'$ is the derivative of $f$. Writing $f = hg$ we have that $h$ is separable.
We call $h$ the separable associate of $f$.
\end{lem}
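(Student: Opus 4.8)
The plan is to unwind the definition of separability and then show that $h=f/g$ is \emph{squarefree}, from which separability is immediate. Over a field the ring $L[X]$ is Euclidean, so $g=\gcd(f,f')$ is a well-defined monic generator of the ideal $\ideal{f,f'}$, and ``$h$ separable'' means exactly that $\gcd(h,h')$ is a unit, i.e.\ that there are $r,s\in L[X]$ with $rh+sh'=1$. Since $g\mid f$ we may write $f=hg$ with $h$ monic, and since $g\mid f'$ we may also set $k=f'/g$.

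First I would record a clean algebraic consequence of the B\'ezout relation for the gcd. Choosing $u,v$ with $uf+vf'=g$ and substituting $f=hg$, $f'=kg$, then cancelling the nonzero factor $g$, gives $uh+vk=1$, so $\gcd(h,k)=1$. Differentiating $f=hg$ gives $f'=h'g+hg'$, i.e.\ $kg=h'g+hg'$, hence the identity
\[ g\,(k-h')=h\,g'. \]
Writing $c=\gcd(h,g)$, $h=c h_0$, $g=c g_0$ with $\gcd(h_0,g_0)=1$, this identity yields $g_0\mid g'$ and $h_0\mid(k-h')$; combined with $\gcd(h_0,k)=1$ (which follows from $\gcd(h,k)=1$ since $h_0\mid h$) it gives $\gcd(h_0,h')=1$, so that $\gcd(h,h')=\gcd(c,h')$. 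Thus everything reduces to controlling the ``repeated part'' $c$ of $f$.

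Conceptually the result is transparent once one is allowed to factor $f$ into monic irreducibles: for an irreducible $p$ with $v_p(f)=m\ge 1$ one writes $f=p^m w$, so $f'=p^{m-1}(m p' w+p w')$, and since $p\nmid p'$ and $m\neq 0$ in $L$ the second factor is prime to $p$; hence $v_p(f')=m-1$, $v_p(g)=m-1$, and $v_p(h)=1$. So $h$ is squarefree, and the same computation with $m=1$ shows $v_p(h')=0$ for every $p\mid h$, i.e.\ $\gcd(h,h')=1$. (The same valuation bookkeeping in fact shows $v_p(h)\le 1$ in every characteristic, so the lemma holds over an arbitrary field.)

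The main obstacle is that this factorization argument is \emph{not} constructive: over a general field one cannot decide irreducibility, so one cannot form the $p^m w$ decomposition. I would therefore replace it by an induction on $\deg f$ mirroring a squarefree-factorization procedure. If $\deg g=0$ then $h=f$ and $\gcd(f,f')=1$, so $f$ is already separable; otherwise $1\le\deg g<\deg f$ and the inductive hypothesis applies to $g$. The crux, and the step I expect to be delicate, is to identify the repeated part $c=\gcd(h,g)$ with the separable associate $g/\gcd(g,g')$ of $g$ and to establish the coprimalities $\gcd(c,h_0)=1$ and $\gcd(c,c')=1$ that let the inductive hypothesis close $\gcd(c,h')=\gcd(c,c'h_0)=1$, all by Euclidean gcd manipulations over the discrete field $L$ rather than by factoring. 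This is precisely the content of the correctness proof for a Yun-style squarefree decomposition, and it is where the Euclidean structure of $L[X]$ is really used.
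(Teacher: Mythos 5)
Your reduction is correct as far as it goes: the identities $uh+vk=1$ and $g(k-h')=hg'$ do yield $\gcd(h_0,h')=1$ and hence $\gcd(h,h')=\gcd(c,h')$ with $c=\gcd(h,g)$. But the proof stops exactly where the constructive difficulty lives. The three facts you defer to ``the correctness proof for a Yun-style squarefree decomposition'' --- that $c$ coincides with the separable associate $g/\gcd(g,g')$ of $g$, that $\gcd(c,h_0)=1$, and (only then, via the inductive hypothesis) that $\gcd(c,c')=1$ --- are precisely the statements whose textbook proofs factor $f$ into irreducibles, the move you correctly identified as unavailable. Naming them is not proving them; the first two need genuinely new gcd manipulations that you have not supplied, and the lemma being proved \emph{is} essentially the correctness of squarefree decomposition, so the appeal is circular in spirit. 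A smaller point: your induction needs $1\le\deg g<\deg f$, which uses $f'\neq 0$; this is fine in characteristic $0$ but contradicts your parenthetical claim that the same argument works over an arbitrary field (take $f=X^p-t$ in characteristic $p$, where $g=f$).

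For contrast, the paper's proof needs neither induction on $\deg f$ nor the splitting $h=ch_0$. It sets $a=\gcd(h,h')$ and $d=\gcd(a,a')$, writes $h=l_1a$, $a=l_2d$, $a'=m_2d$ with $l_2,m_2$ coprime, and deduces $l_2\mid l_1$. From this it extracts the implication ``if $a^n$ divides $p$ then $a^{n+1}$ divides $hp'$,'' applies it with $p=g$ to get that $a^n\mid g$ implies $a^{n+1}$ divides both $f=l_1ag$ and $f'=g'h+h'g$, hence divides $\gcd(f,f')=g$; so $a^n\mid g$ for every $n$, and since $g\neq 0$ has bounded degree, $a$ is a unit. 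If you want to salvage your route you must actually establish $c=g/\gcd(g,g')$ and $\gcd(c,h_0)=1$ by explicit gcd identities; it is likely easier to adopt the paper's self-strengthening divisibility argument, which closes the proof in one pass.
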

\begin{proof} Let $a$ be the gcd of $h$ and $h'$. We have $h = l_1 a$.
Let $d$ be the gcd of $a$ and $a'$. We have $a = l_2 d$ and $a' = m_2 d$, with $l_2$ and
$m_2$ coprime.

 The polynomial $a$ divides $h' = l_1a' + l_1'a$ and hence that $a = l_2 d$ divides
$l_1 a' = l_1 m_2 d$. It follows that $l_2$ divides $l_1m_2$ and since $l_2$ and $m_2$
are coprime, that $l_2$ divides $l_1$.

 Also, if $a^n$ divides $p$ then $p = q a^n$ and $p' = q'a^n + nqa'a^{n-1}$. Hence
$d a^{n-1}$ divides $p'$. Since $l_2$ divides $l_1$, this implies 
that $a^n = l_2 d a^{n-1}$ divides $l_1 p'$. So $a^{n+1}$ divides $al_1 p' = h p'$.

 Since $a$ divides $f$ and $f'$, $a$ divides $g$. We show that $a^n$ divides $g$ for all $n$ by induction on $n$. If $a^n$ divides $g$
we have just seen that $a^{n+1}$ divides $g'h$. Also $a^{n+1}$ divides $h'g$ since $a$ divides $h'$.
So $a^{n+1}$ divides $g'h+h'g = f'$. On the other hand, $a^{n+1}$ divides $f = hg = l_1ag$.
So $a^{n+1}$ divides $g$ which is the gcd of $f$ and $f'$.  This implies that $a$ is a unit.
\end{proof}


Since $\F$ is a field, the previous lemma holds for polynomials over $\F$. This means that for all objects $R$ of $\raop$ we have $R\forces \textup{Lemma }\textup{\ref{squarefree}}$. Thus we have the following Corollary.

\begin{cor}
\label{squarefreereg}
Let $R$ be an object of $\ra$ and let $f$ be a monic polynomial of degree $n$ in $R[X]$ and $f'$ its derivative. There is a cocover $\{\varphi_i:R\rightarrow R_i\}_{i\in I} \in \op{\bJ^{\ast}}(R)$ and for each $R_i$ we have $h,g,q,r,s\in R_i[X]$ such that $\varphi_i (f) = h g$, $\varphi_i (f') = q g$ and $r h + s q = 1$. Moreover, $h$ is monic and separable.\qed
\end{cor}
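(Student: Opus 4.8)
The plan is to read off the corollary as the external, Kripke--Joyal unfolding of the internal validity $R \forces \textup{Lemma }\textup{\ref{squarefree}}$ recorded in the paragraph just above it. First I would make the internal content precise. Since $\F$ is the forgetful functor, a monic polynomial of degree $n$ over the field object $\F$ is nothing but the tuple of its lower coefficients in $\F$, so Lemma \ref{squarefree} can be rephrased as a single IHOL formula over $\F$, namely
\begin{equation*}
\forall a_1\dots\forall a_n\;\exists g\,\exists h\,\exists q\,\exists r\,\exists s\;\bigl[\,f = hg \,\land\, f' = qg \,\land\, rh + sq = 1 \,\land\, \chi(h)\,\bigr],
\end{equation*}
where $f = X^n + \sum_{i=1}^{n} a_i X^{n-i}$, each of $g,h,q,r,s$ is quantified as a tuple of coefficients of a fixed formal degree at most $n$, and $\chi(h)$ abbreviates the conjunction asserting that the top coefficient of $h$ is $1$ together with $\exists r'\exists s'[\,r'h + s'h' = 1\,]$. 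Because $\F$ is a field and the proof of Lemma \ref{squarefree} is constructive and appeals only to the field axioms, this formula is derivable in IHOL and hence forced at every object $R$ of $\raop$.

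Next I would unfold the forcing mechanically. Applying the \fbox{$\forall$} clause to the identity $1_R$ and to the coefficient tuple of the given $f \in R[X]$, which is an element of $\F(R)^n = R^n$, reduces the task to forcing the existential body at $R$ with these data. Iterating the \fbox{$\exists$} clause, and using that $\bJ^\ast$ is closed under composition so that the finitely many nested covers amalgamate into one, yields a single cover $\{R_i \to R\}_{i\in I}\in \bJ^\ast(R)$ in $\raop$, equivalently a cocover $\{\varphi_i : R \to R_i\}_{i\in I}\in \op{\bJ^\ast}(R)$, together with witnesses $g_i,h_i,q_i,r_i,s_i \in \F(R_i) = R_i[X]$ for each $i$. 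Since forcing of an equality is literal equality by the \fbox{$=$} clause, the conjuncts translate directly into the stated identities $\varphi_i(f) = h_i g_i$, $\varphi_i(f') = q_i g_i$ and $r_i h_i + s_i q_i = 1$ in $R_i[X]$, with $h_i$ monic and separable.

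The conceptual point, and the step that needs the most care, is the passage from the internal ``field'' computation to the external cover. Internally $\F$ behaves as a genuine field, so one may form $g = \langle f, f'\rangle$ together with its B\'ezout cofactors, which simultaneously give the factorizations $f = hg$, $f' = qg$ and the coprimality $rh + sq = 1$ (the last because $g/g = 1$ in the domain $\F[X]$), while $h$ is monic and separable by Lemma \ref{squarefree} itself. Externally, however, running this gcd over a regular ring requires repeatedly deciding whether a leading coefficient is $0$ or invertible, and each such decision is resolved only after splitting $R$ by an idempotent (clause \ref{top}.\ref{genDecomp}) or passing to a separable extension (clause \ref{top}.\ref{genEmbed}). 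The \fbox{$\exists$} clause is precisely what collects all of these case splits into the single cocover $\{\varphi_i\}$. The remaining work is therefore bookkeeping: fixing the formal degrees of the witnesses so that every quantifier genuinely ranges over tuples of $\F$, and checking that no step in the proof of Lemma \ref{squarefree} invokes a property beyond the field axioms. Granting this, the corollary follows at once.
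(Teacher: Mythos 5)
Your proposal is correct and follows essentially the same route as the paper: the paper offers no separate proof either, stating just before the corollary that since $\F$ is internally a field, $R \forces \textup{Lemma }\ref{squarefree}$ for every object $R$, and leaving the Kripke--Joyal unfolding (which you carry out explicitly) implicit. The bookkeeping points you flag, such as quantifying polynomials as coefficient tuples of bounded formal degree, are exactly the details the paper elides.
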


Note that in characteristic $0$, if $f$ is monic and non-constant the separable associate of $f$ is non-constant.
\begin{lem}
The field object $\F \in \Sh(\raop, \bJ)$ is algebraically closed.
\end{lem}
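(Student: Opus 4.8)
The plan is to unfold the Kripke--Joyal semantics of the universally quantified axiom and reduce the claim to a purely algebraic root-splitting fact supplied by Corollary \ref{squarefreereg}. Fix $n \geq 1$ and an object $R$ of $\ra$. By the \fbox{$\forall$} clause (applied $n$ times) together with the constant-replacement convention of Definition \ref{forcing}, it suffices to show that for every morphism $R \rightarrow A$ of $\ra$ and all $a_1,\dots,a_n \in \F(A) = A$ one has $A \forces \exists x\,[x^n + \sum_{i=1}^{n} x^{n-i} a_i = 0]$. Writing $f = X^n + \sum_{i=1}^{n} a_i X^{n-i} \in A[X]$, a monic non-constant polynomial over the regular algebra $A$, the \fbox{$\exists$} clause turns the goal into: produce a cocover $\{A \rightarrow A_j\}_j \in \op{\bJ^\ast}(A)$ such that the restriction of $f$ along each $A \rightarrow A_j$ has a root in $A_j$.

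First I would apply Corollary \ref{squarefreereg} to $f$, obtaining a cocover $\{\varphi_i : A \rightarrow A_i\}_{i \in I} \in \op{\bJ^\ast}(A)$ together with, on each component, a factorization $\varphi_i(f) = h_i g_i$ where $h_i \in A_i[X]$ is monic and separable. By the remark following Lemma \ref{squarefree}, the characteristic-$0$ hypothesis guarantees that the separable associate $h_i$ of the monic non-constant polynomial $\varphi_i(f)$ is again non-constant. Thus $h_i$ is monic, non-constant, and separable, so $A_i[r_i] = A_i[X]/\ideal{h_i}$ is a separable extension of $A_i$, and by Definition \ref{top}.\ref{genEmbed} the singleton $\{A_i \rightarrow A_i[r_i]\}$ is an elementary cocover of $A_i$.

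Next I would observe that the adjoined element $r_i$ is a root of $f$: in $A_i[r_i]$ we have $h_i(r_i) = 0$, whence $f(r_i) = h_i(r_i)\,g_i(r_i) = 0$, so $A_i[r_i] \forces (x^n + \sum_{i} x^{n-i} a_i = 0)[r_i]$ by the \fbox{$=$} clause. Composing the cocover $\{A \rightarrow A_i\}_{i \in I}$ with the elementary cocovers $\{A_i \rightarrow A_i[r_i]\}$ through transitivity of $\bJ^\ast$ (clause (iii) of its closure definition) yields a single cocover $\{A \rightarrow A_i[r_i]\}_{i \in I} \in \op{\bJ^\ast}(A)$ over each component of which $f$ has a root. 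This is exactly the witness demanded by the \fbox{$\exists$} clause, so $A \forces \exists x\,[x^n + \sum_{i=1}^{n} x^{n-i} a_i = 0]$, and the \fbox{$\forall$} clause then gives $R \forces \forall a_1 \cdots \forall a_n \exists x\,[x^n + \sum_{i=1}^{n} x^{n-i} a_i = 0]$ for every $R$ and every $n$.

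The delicate point---and the reason the argument is genuinely constructive rather than a one-line appeal to choosing a root in a splitting field---is that over $A$ one can neither factor $f$ into irreducibles nor locate a root directly. The real work is concentrated in Corollary \ref{squarefreereg}, which splits $A$ along idempotents precisely so that a genuinely separable monic factor $h_i$ becomes available on each piece; the characteristic-$0$ assumption is essential there, since it is what keeps each $h_i$ non-constant and hence makes the root-adjoining extension a legitimate elementary cocover.
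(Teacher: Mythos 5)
Your proof is correct and follows essentially the same route as the paper's: unfold the $\forall$ and $\exists$ clauses, invoke Corollary \ref{squarefreereg} to split the base and extract a monic separable (and, by characteristic $0$, non-constant) factor $h_i$ of $f$ on each component, adjoin a root of $h_i$ via the elementary cocover of Definition \ref{top}, and glue back using transitivity of covers (the paper phrases this last step via \fbox{LC}, which is equivalent). The only cosmetic difference is that you make the reduction to arbitrary $A$ and the cover composition explicit, whereas the paper states the claim directly for all objects $R$ and appeals to local character.
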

\begin{proof}
We prove that for all $n>0$ and all $(a_1,...,a_n)\in \F^n(R) = R^n$, one has $R \forces \exists x \;[x^n +\sum_{i=1}^{n} x^{n-i} a_i = 0]$.
Let $f = x^n +\sum_{i=1}^{n} x^{n-i} a_i$. By Corollary \ref{squarefreereg} we have a cover $\{\vartheta_j:R_j \rightarrow R\}_{j\in I} \in \bJ^\ast(R)$ such that in each $R_j$ we have $g = \langle f \vartheta_j, f' \vartheta_j\rangle$ and $f\vartheta_j = h g$ with $h \in R_j[X]$ monic and separable. Note that if $\deg f \geq 1$, $h$ is non-constant. For each $R_j$ we have a singleton cover $\{\varphi:R_j[b] \rightarrow R_j\mid R_j[b]=R_j[X]/\ideal{h}\} \in \bJ^\ast(R_j)$. That is, we have $R_j[b] \forces b^n + \sum_{i=1}^{n} b^{n-1}(a_i\vartheta_j \varphi)=0$. By \fbox{$\exists$} we get $R_j[b] \forces \exists x\;[x^n + \sum_{i=1}^{n} x^{n-1}(a_i\vartheta_j \varphi)=0]$ and by \fbox{LC} we have $R_j \forces \exists x\;[x^n + \sum_{i=1}^{n} x^{n-1}(a_i\vartheta_j)=0]$. Since this is true for each $R_j, j\in J$ we have by \fbox{LC} $R\forces \exists x\;[x^n + \sum_{i=1}^{n} x^{n-1} a_i=0]$.
\end{proof}

\begin{lem}
$\F$ is algebraic over $K$.
\end{lem}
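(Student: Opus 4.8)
The plan is to unfold the universal quantifier and the (infinitary) disjunction through the Kripke--Joyal clauses and then invoke finite dimensionality of the objects. By the clause \fbox{$\forall$}, to establish $R \forces \forall x\,[\bigvee_{p\in K[Y]} p(x) = 0]$ for every object $R$ it suffices to fix an arbitrary morphism $\varphi:A \rightarrow R$ of $\raop$ and an arbitrary element $a \in \F(A) = A$, and to show that $A \forces \bigvee_{p\in K[Y]} p(x) = 0\,[a]$. Reading the infinitary disjunction through (the obvious extension of) the clause \fbox{$\lor$}, this amounts to producing a cover $\{A_i \xrightarrow{\;\psi_i\;} A\}_{i} \in \bJ^\ast(A)$ such that on each $A_i$ there is a polynomial $p \in K[Y]$ with $A_i \forces p(x) = 0\,[a\psi_i]$; by the clause \fbox{$=$} the latter says exactly that $p(a\psi_i) = 0$ holds in the ring $A_i$.

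The key observation is that no nontrivial cover is needed. Since $A$ is a finitely presented regular $K$-algebra, it is finite dimensional as a $K$-vector space \cite[Ch 4, Theorem 8.16]{lombardi_book}. Hence for any $a \in A$ the powers $1, a, a^2, \dots$ cannot all be $K$-linearly independent, so there is a nonzero polynomial $p \in K[Y]$, of degree at most $\dim_K A$, with $p(a) = 0$ in $A$. Taking the trivial cover $\{A \xrightarrow{\;1_A\;} A\} \in \bJ^\ast(A)$, the clause \fbox{$=$} gives $A \forces p(x) = 0\,[a]$, and therefore $A \forces \bigvee_{p\in K[Y]} p(x) = 0\,[a]$ by \fbox{$\lor$}. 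Since $\varphi$ and $a$ were arbitrary, \fbox{$\forall$} yields $R \forces \forall x\,[\bigvee_{p\in K[Y]} p(x) = 0]$ for every object $R$, which is the claim.

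I do not expect a genuine obstacle here. Unlike the proof that $\F$ is algebraically closed, where one really had to split $R$ and adjoin a root along a cocover extracted from Corollary \ref{squarefreereg}, the present statement is already witnessed at the stage $A$ itself, via the trivial cover. The only points requiring a little care are the bookkeeping of the infinitary disjunction in the forcing clause and the (entirely standard) fact that every element of a finite-dimensional $K$-algebra is algebraic over $K$; both are routine, so the argument stays short.
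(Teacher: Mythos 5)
Your proof is correct, and the forcing bookkeeping (unfolding \fbox{$\forall$}, then witnessing the infinitary disjunction at the stage $A$ itself via the trivial cover and \fbox{$=$}) matches what the paper does. Where you diverge is in the purely algebraic core, namely why every element of an object of $\ra$ is algebraic over $K$. You argue via finite dimensionality: since $A$ is a finite-dimensional $K$-vector space (a fact the paper records in Section \ref{site}, citing \cite[Ch 4, Theorem 8.16]{lombardi_book}), the powers $1,a,a^2,\dots$ must be $K$-linearly dependent, yielding a nonzero $p\in K[Y]$ with $p(a)=0$. The paper instead invokes Noether normalization: $R$ is a finite integral extension of a polynomial ring $K[Y_1,\dots,Y_n]$ with $n$ equal to the Krull dimension of $R$, and zero-dimensionality forces $n=0$, so $R$ is integral over $K$ and every element satisfies a \emph{monic} polynomial. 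Your route is more elementary and leans directly on a fact already stated in the paper; the paper's route yields the marginally stronger conclusion of integrality (a monic witness), though that extra strength is not needed for the statement as formulated. Both arguments are constructively unproblematic over a discrete field, and neither requires a nontrivial cover.
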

\begin{proof}
We will show that for any object $R$ of $\ra$ and element $r \in R$ one has $R\forces \bigvee_{p\in K[X]} p(r) = 0$. Since $R$ is a finitely presented $K$-algebra we have that $R$ is a finite integral extension of a polynomial ring $K[Y_1,...,Y_n]\subset R$ where $Y_1,..,Y_n$ are elements of $R$ algebraically independent over $K$ and that $R$ has Krull dimension $n$ \cite[Ch 13, Theorem 5.4] {lombardi_book}. Since $R$ is zero-dimensional (i.e. has Krull dimension $0$) we have $n=0$ and $R$ is integral over $K$, i.e. any element $r \in R$ is the zero of some monic polynomial over $K$.
\end{proof}

\section{Constant sheaves, natural numbers, and power series}
\label{sec:powerseires}
Here we describe the object of natural numbers in the topos $\Sh(\raop, \bJ)$ and the object of power series over the field $\F$. This will be used in section \ref{AC} to show that the axiom of dependent choice does not hold when the base field $K$ is the rationals and later in the example of Newton--Puiseux theorem (section \ref{eliminating}).

Let $\bP:\ra \rightarrow \catset$ be a constant presheaf associating to each object $A$ of $\ra$ a discrete set $B$. That is, $\bP(A) = B$ and $\bP(A\xrightarrow{\varphi}R) = 1_B$ for all objects $A$ and all morphism $\varphi$ of $\ra$. Let $\Q:\ra \rightarrow \catset$ be the presheaf such that $\Q(A)$ is the set of elements of the form $\{(e_i,b_i)\}_{i\in I}$ where $(e_i)_{i\in I}$ is a fundamental system of orthogonal idempotents of $A$ and for each $i$, $b_i \in B$. We express such an element as a formal sum $\sum_{i\in I} e_i b_i$. Let $\varphi:A \rightarrow R$ be a morphism of $\ra$, the restriction of $\sum_{i\in I} e_i b_i \in \Q(A)$ along $\varphi$ is given by $(\sum_{i\in I} e_i b_i) \varphi = \sum_{i\in I}\varphi(e_i)  b_i \in \Q(R)$. In particular with canonical morphisms $\varphi_i:A \rightarrow A/\ideal{1-e_i}$, one has for any $j \in I$ that $(\sum_{i\in I} e_i b_i) \varphi_j = b_j \in \Q(A/\ideal{1-e_j})$. Two elements $\sum_{i\in I} e_i b_i \in \Q(A)$ and $\sum_{j\in J} d_j c_j \in \Q(A)$ are equal if and only if $\forall i\in I, j\in J [b_i \neq c_j \Rightarrow e_i d_j = 0]$. 	

To prove that $\Q$ is a sheaf we will need the following lemmas.
\begin{lem}
\label{amalgamationofidemp}
Let $R$ be a regular ring and let $(e_i)_{i\in I}$ be a fundamental system of orthogonal idempotents of $R$. Let $R_i = R/\ideal{1-e_i}$ and $([d_j])_{j\in J_i}$ be a fundamental system of orthogonal idempotents of $R_i$, where $[d_j] = d_j + \ideal{1-e_i}$. The family $(e_i d_j)_{i \in I, j \in J_i}$ is a fundamental system of orthogonal idempotents of $R$.
\end{lem}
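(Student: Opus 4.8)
The plan is to verify directly the three defining conditions for the family $(e_i d_j)_{i\in I, j\in J_i}$ to be a fundamental system of orthogonal idempotents of $R$: that each $e_i d_j$ is idempotent, that $\sum_{i\in I, j\in J_i} e_i d_j = 1$, and that $(e_i d_j)(e_{i'} d_{j'}) = 0$ whenever $(i,j)\neq(i',j')$. The single fact that drives all three computations is that multiplication by $e_i$ annihilates the ideal $\ideal{1-e_i}$: since $e_i(1-e_i)=0$ and $\ideal{1-e_i}$ is the principal ideal generated by $1-e_i$, every $x\in\ideal{1-e_i}$ satisfies $e_i x = 0$. Equivalently, $\ideal{1-e_i}$ is the kernel of the canonical map $\varphi_i:R\rightarrow R_i$, so any congruence modulo $\ideal{1-e_i}$ becomes an exact equation in $R$ once multiplied through by $e_i$.

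For idempotency, note that $[d_j]$ idempotent in $R_i$ means $d_j^2 - d_j \in \ideal{1-e_i}$, whence $e_i(d_j^2-d_j)=0$, i.e. $e_i d_j^2 = e_i d_j$; combined with $e_i^2 = e_i$ this gives $(e_i d_j)^2 = e_i^2 d_j^2 = e_i d_j^2 = e_i d_j$. For the sum, because $([d_j])_{j\in J_i}$ is fundamental in $R_i$ we have $\sum_{j\in J_i} d_j - 1 \in \ideal{1-e_i}$, so $e_i\sum_{j\in J_i} d_j = e_i$; summing over $i$ and using $\sum_{i\in I} e_i = 1$ yields $\sum_{i,j} e_i d_j = \sum_i e_i = 1$.

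For orthogonality I would split into two cases. If $i\neq i'$, then orthogonality of the $e$'s gives $e_i e_{i'} = 0$, so $(e_i d_j)(e_{i'} d_{j'}) = e_i e_{i'} d_j d_{j'} = 0$ at once. If $i = i'$ but $j\neq j'$, then orthogonality of the $[d_j]$ in $R_i$ gives $d_j d_{j'} \in \ideal{1-e_i}$, and multiplying by $e_i$ kills it: $(e_i d_j)(e_i d_{j'}) = e_i^2 d_j d_{j'} = e_i(d_j d_{j'}) = 0$. This exhausts the conditions.

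There is no deep obstruction here; the only subtlety is bookkeeping with representatives, since the $d_j$ are defined only modulo $\ideal{1-e_i}$ and the idempotency and orthogonality relations among the $[d_j]$ hold only in the quotient $R_i$. The conceptual key, and the step I would state explicitly, is that multiplying by $e_i$ realizes the projection onto the $R_i$-factor and converts each mod-$\ideal{1-e_i}$ relation into an exact identity in $R$; after that observation all three verifications reduce to one-line computations.
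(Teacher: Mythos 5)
Your proof is correct and follows essentially the same route as the paper's: both arguments rest on the single observation that multiplication by $e_i$ annihilates $\ideal{1-e_i}$, which turns each relation holding in $R_i$ into an exact identity in $R$. You are slightly more explicit than the paper in also verifying idempotency of each $e_i d_j$ (which in any case follows from the sum and orthogonality conditions), but there is no substantive difference in approach.
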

\begin{proof}
In $R$ one has $\sum_{j \in J_i} e_i d_j = e_i \sum_{j\in J_i} d_j = e_i (1 + \ideal{1-e_i}) = e_i$. Hence, $\displaystyle\sum_{i\in I, j\in J_i} e_i d_j = \sum_{i\in I} e_i = 1$. For some $i\in I$ and $t, k \in J_i$ we have $(e_i d_t) (e_i d_k) = e_i (0 + \ideal{1-e_i}) = 0$ in $R$. Thus for $i, \ell \in I$, $j \in J_i$ and $s \in J_\ell$ one has $i \neq \ell \lor j \neq s \Rightarrow (e_i d_j) (e_\ell d_s) = 0$.
\end{proof}

\begin{lem}
\label{thegoodlemma}
Let $R$ be a regular ring, $f \in R[Z]$ a polynomial of formal degree $n$ and $p \in R[Z]$ a monic polynomial of degree $m > n$. If in $R[X,Y]$ one has $f(Y) (1-f(X)) = 0 \mod \ideal{p(X),p(Y)}$ then $f = e \in R$ with $e$ an idempotent.
\end{lem}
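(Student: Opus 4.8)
The plan is to exploit the free-module structure of $R[X,Y]/\ideal{p(X),p(Y)}$ over $R$ and thereby reduce the hypothesis to a small set of relations among the coefficients of $f$. Write $a,b$ for the images of $X,Y$ in $R[a,b]=R[X,Y]/\ideal{p(X),p(Y)}$ and $f=\sum_{k=0}^{n} c_k Z^k$ with $c_k\in R$, reflecting that $f$ has formal degree $n$. Since $p$ is monic of degree $m$, the ring $R[a,b]$ is a free $R$-module with basis $\{a^i b^j : 0\le i,j\le m-1\}$ (this is the same linear-independence phenomenon underlying Lemma~\ref{separablehasnonequalroots}).

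First I would expand the hypothesis. The equation $f(b)(1-f(a))=0$ is equivalent to $f(b)=f(a)f(b)$, which written out reads
\begin{equation*}
\sum_{j=0}^{n} c_j\, b^j \;=\; \sum_{i=0}^{n}\sum_{j=0}^{n} c_i c_j\, a^i b^j.
\end{equation*}
The crucial point is that because $n<m$, every monomial $a^i b^j$ occurring on either side has $i,j<m$ and hence is a genuine, distinct basis element; no reduction modulo $p$ takes place. Equating coefficients, the terms with $i=0$ give $c_j=c_0 c_j$ for each $j$, and the terms with $1\le i\le n$ give $c_i c_j=0$ for all $j$.

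The decisive step is to invoke reducedness. Taking $i=j$ with $1\le i\le n$ in the relation $c_i c_j=0$ yields $c_i^{2}=0$; since $R$ is regular, hence reduced, this forces $c_i=0$ for all $1\le i\le n$. Therefore $f=c_0\in R$. Finally, the relation $c_0=c_0 c_0$ (the case $i=j=0$) shows that $c_0$ is idempotent, so $f=e$ with $e=c_0$ an idempotent, as claimed.

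I do not expect a serious obstacle here: the proof is essentially a coefficient comparison in a free module followed by one use of reducedness. The only points requiring care are checking that the condition $n<m$ is exactly what guarantees the monomials $a^i b^j$ are honest basis elements (so that the coefficient comparison is legitimate rather than being corrupted by relations coming from $p$), and correctly passing from $c_i^{2}=0$ to $c_i=0$ via the fact that a regular ring is reduced.
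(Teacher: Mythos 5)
Your proof is correct and essentially coincides with the paper's: the paper performs the same coefficient comparison by reducing first modulo $p(X)$ and then extracting $Y$-coefficients (rather than invoking the free basis $\{a^ib^j\}$ all at once), and it likewise concludes $r_i^2=0$ for $i\geq 1$ (hence $r_i=0$ by reducedness) and $r_0^2=r_0$.
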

\begin{proof}
Let $f(Z) = \sum_{i = 0}^n r_i Z^i$. By the assumption, for some $q, g \in R[X,Y]$
\begin{equation*}
\begin{split}
f(Y) (1-f(X)) = \sum_{i=0}^n r_i (1-\sum_{j=0}^n r_j X^j) Y^i = qp(X) + g p(Y)
\end{split}
\end{equation*}
One has $\sum_{i=0}^n r_i (1-\sum_{j=0}^n r_j X^j) Y^i = g(X,Y) p(Y) \mod \ideal{p(X)}$. Since $p(Y)$ is monic of $Y$-degree greater than $n$, one has that $r_i (1-\sum_{j=0}^n r_j X^j) = 0 \mod \ideal{p(X)}$ for all $0 \leq i \leq n$. But this means that $r_i r_n X^n + r_i r_{n-1} X^{n-1} + ... + r_i r_0 - r_i$ is divisible by $p(X)$ for all $0 \leq i \leq n$ which because $p(X)$ is monic of degree $m > n$  implies that all coefficients are equal to $0$. In particular, for $1 \leq i \leq n$ one gets that $r_i^2 = 0$ and hence $r_i = 0$ since $R$ is reduced. For $i=0$ we have $r_0 r_0 - r_0 = 0$ and thus $r_0$ is an idempotent of $R$.
\end{proof}

\begin{lem}
The presheaf $\Q$ described above is a sheaf on $(\raop,\bJ)$.
\end{lem}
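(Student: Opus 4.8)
The plan is to establish the sheaf condition directly for the two kinds of elementary cocovers of Definition \ref{top}, exactly as in the proof of Lemma \ref{isasheaf}; since a presheaf is a $\bJ$-sheaf as soon as it satisfies the amalgamation property for the elementary covers generating the coverage (the passage to $\op{\bJ^\ast}$ only adjoins identities and composites, for which the sheaf axiom is automatic), it suffices to treat clauses \ref{top}.\ref{genDecomp} and \ref{top}.\ref{genEmbed}. First I record that $\Q(0) = 1$: in the trivial algebra every idempotent equals $0$, so the defining equality relation on formal sums identifies all of them and $\Q(0)$ is a singleton. Hence Lemma \ref{splitfamilycompatible} applies to $\Q$.

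For the idempotent-decomposition cocover $\{R \xrightarrow{\varphi_i} R/\ideal{1-e_i}\}_{i\in I}$, Lemma \ref{splitfamilycompatible} shows that every family $\{s_i \in \Q(R/\ideal{1-e_i})\}_{i\in I}$ is compatible, so only existence and uniqueness of an amalgamation remain. Writing each $s_i = \sum_{j\in J_i}[d_{ij}]\,b_{ij}$ with $([d_{ij}])_{j}$ a fundamental system of orthogonal idempotents of $R/\ideal{1-e_i}$ and choosing representatives $d_{ij}\in R$, Lemma \ref{amalgamationofidemp} guarantees that $(e_i d_{ij})_{i\in I,\, j\in J_i}$ is a fundamental system of orthogonal idempotents of $R$, and I set $s = \sum_{i,j}(e_i d_{ij})\,b_{ij}\in\Q(R)$. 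Since $\varphi_i(e_\ell) = 0$ for $\ell\neq i$ and $\varphi_i(e_i)=1$, restricting gives $s\varphi_i = \sum_j [d_{ij}]\,b_{ij} = s_i$. For uniqueness, suppose $s' = \sum_k c_k b'_k$ also satisfies $s'\varphi_\ell = s_\ell$ for all $\ell$; then $s\varphi_\ell = s'\varphi_\ell$ in each $\Q(R/\ideal{1-e_\ell})$, so the defining relation gives $\varphi_\ell\bigl((e_i d_{ij})c_k\bigr)=0$ whenever $b_{ij}\neq b'_k$. As $R\cong\prod_\ell R/\ideal{1-e_\ell}$ via the $\varphi_\ell$, an element vanishes iff all its components do, whence $(e_i d_{ij})c_k = 0$ in $R$ for all such indices, i.e.\ $s = s'$.

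For the separable-extension cocover $\{R\xrightarrow{\varphi}R[a]\}$ with $R[a]=R[X]/\ideal{p}$ and $p$ monic, non-constant, separable, this is a singleton family, so by the remark following Corollary \ref{pushoutequalizerofextension} a family $\{s\in\Q(R[a])\}$ is compatible iff $s\vartheta = s\zeta$ for the pushout $R[a]\rightrightarrows R[b,c]$. Given such an $s$, I first normalize it to $s = \sum_{i\in I}\epsilon_i\,b_i$ with the $b_i$ pairwise distinct, by grouping idempotents that share a $B$-value into their (idempotent) sum. For $i\neq j$ we then have $b_i\neq b_j$, so $s\vartheta = s\zeta$ forces $\vartheta(\epsilon_i)\zeta(\epsilon_j)=0$; summing over $j\neq i$ and using $\sum_j\zeta(\epsilon_j)=1$ yields $\vartheta(\epsilon_i)\bigl(1-\zeta(\epsilon_i)\bigr)=0$. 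Writing $\epsilon_i = f_i(a)$ with $f_i\in R[X]$ of formal degree less than $\deg p$, this is precisely the hypothesis of Lemma \ref{thegoodlemma} (after exchanging $X$ and $Y$), so $f_i = e_i\in R$ is an idempotent and $\epsilon_i = \varphi(e_i)$. Because $\varphi$ is an injective ring homomorphism, $(e_i)_{i\in I}$ is a fundamental system of orthogonal idempotents of $R$, the element $t = \sum_i e_i b_i\in\Q(R)$ satisfies $t\varphi = s$, and injectivity of $\varphi$ gives uniqueness, since $t\varphi = t'\varphi$ forces every defining product to vanish already in $R$.

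The routine parts are the two existence constructions and the uniqueness bookkeeping, which reduce to injectivity of $\varphi$ and to the decomposition $R\cong\prod_i R/\ideal{1-e_i}$. The crux is the separable-extension case: the real work is to pass from the single equality of formal sums $s\vartheta = s\zeta$ to individual conditions on the idempotents $\epsilon_i$, for which the normalization to distinct $B$-values and the orthogonality argument are essential, and then to descend each $\epsilon_i$ to an idempotent of $R$. This descent is exactly what Lemma \ref{thegoodlemma} supplies; alternatively one may invoke the equalizer property of Corollary \ref{pushoutequalizerofextension} to place each $\epsilon_i$ in $\varphi(R)$ and then transfer idempotency back through $\varphi$.
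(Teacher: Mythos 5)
Your proof is correct and follows essentially the same route as the paper's: $\Q(0)=1$ plus Lemma \ref{splitfamilycompatible} and Lemma \ref{amalgamationofidemp} for the idempotent-decomposition cocovers, and for the separable-extension cocovers the normalization to distinct $B$-values, the orthogonality computation $\vartheta(\epsilon_i)(1-\zeta(\epsilon_i))=0$, and the descent of each idempotent to $R$ via Lemma \ref{thegoodlemma}, with uniqueness from injectivity of $\varphi$. The only differences are cosmetic (you phrase the uniqueness in case (i) via the product decomposition $R\cong\prod_i R/\ideal{1-e_i}$ rather than the paper's explicit computation with $c_\ell d_j = r(1-e_i)$, and you make the reduction to elementary covers explicit), so nothing further is needed.
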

\begin{proof}
By case analysis on Definition \ref{top}.
\begin{enumerate}[label=(\roman*.),align=left]
\item Let $\{R\xrightarrow{\varphi_i} R/\ideal{1-e_i}\}_{i\in I} \in \op{\bJ}(R)$ where $(e_i)_{i \in I}$ be a fundamental system of orthogonal idempotents of an object $R$. Let $R/\ideal{1-e_i} = R_i$. Since $\Q(0) = 1$ by Lemma \ref{splitfamilycompatible} any set $\{s_i \in \Q(R_i)\}_{i\in I}$ is  compatible. For each $i$, Let $s_i = \sum_{j \in J_i} [d_j] b_j$. By Lemma \ref{amalgamationofidemp} we have an element $s=\displaystyle\sum_{i\in I, j \in J_i} (e_i d_j)  b_j \in \Q(R)$ the restriction of which along $\varphi_i$ is the element $\sum_{j \in J_i} [d_j] b_j \in \Q(R_i)$. It remains to show that this is the only such element. Let there be an element $\sum_{\ell \in L} c_\ell  a_\ell \in \Q(R)$ that restricts to $u_i = s_i$ along $\varphi_i$. We have $u_i = \sum_{\ell \in L} [c_\ell] a_\ell$. One has that for any $j\in J_i$ and $\ell \in L$, $b_j \neq a_\ell \Rightarrow [c_\ell d_j] = 0$ in $R_i$, hence, in $R$ one has $b_j \neq a_\ell \Rightarrow c_\ell d_j = r (1-e_i)$. Multiplying both sides of $c_\ell d_j = r (1-e_i)$ by $e_i$ we get $b_j \neq a_\ell \Rightarrow c_\ell (e_i d_j)= 0$. Thus proving $s=\sum_{\ell \in L} c_\ell  a_\ell$.

\item Let $\{\varphi:R\rightarrow R[a]=R[X]/\ideal{p}\}\in \op{\bJ}(R)$ where $p \in R[X]$ is monic non-constant  and separable. Let the singleton $\{s=\sum_{i \in I} e_i b_i \in \Q(R[a]) \}$ be compatible. We can assume w.l.o.g. that $\forall i,j \in I\; [ i\neq j \Rightarrow b_i \neq b_j]$ since if $b_k = b_\ell$ one has that $(e_k+e_\ell) b_l  + \sum_{j \in I}^{j \neq \ell, j \neq k} e_j b_j = s$. (Note that an idempotent $e_i$ of $R[a]$ is a polynomial $e_i(a)$ in $a$ of formal degree less than $\deg p$). Let $R[c,d] = R[X,Y]/\ideal{p(X),p(Y)}$, by Corollary \ref{pushoutequalizerofextension}, one has a pushout diagram 
\begin{tikzpicture}[inner sep=0, baseline=-0.8ex]
  \node[outer sep=0] {\begin{tikzcd}[column sep=small]
{R\;} \arrow{r}{\varphi} & {\;R[a]\;} \arrow[shift left=.75 ex]{r}{\zeta} \arrow[shift right=0.75 ex,swap]{r}{\vartheta} & {\;R[c,d]}
\end{tikzcd}};\end{tikzpicture} where $\zeta|_R = \vartheta|_R = 1_R$, $\zeta(a) = d$ and $\vartheta(a)=c$. That the singleton $\{s\}$ is compatible then means $s \vartheta = \sum_{i\in I} e_i(c) b_i  = s \zeta = \sum_{i\in I}  e_i(d) b_i$, i.e. $\forall i,j\in I\;[b_i \neq b_j \Rightarrow e_i(c) e_j(d)= 0]$. By the assumption that $b_i \neq b_j$ whenever $i\neq j$ we have in $R[c,d]$ that $e_j(d) e_i(c) = 0$ for any $i\neq j \in I$. Thus $e_j(d) \sum_{i\neq j} e_i(c)  = e_j(d) (1-e_j(c)) =0$, i.e. in $R[X,Y]$ one has $e_j(Y) (1-e_j(X)) =0 \mod \ideal{p(X),p(Y)}$. By Lemma \ref{thegoodlemma} we have that $e_j(X)=e_j(Y) = e \in R$. We have thus shown $s$ is equal to $\sum_{j\in J} d_j b_j \in \Q(R[a])$ such that $d_j \in R$ for $j \in J$. That is $\sum_{j\in J} d_j b_j \in \Q(R)$. Thus we have found a unique (since $\Q(\varphi)$ is injective) element in $\Q(R)$ restricting to $s$ along $\varphi$. \qedhere
\end{enumerate}
\end{proof}

\begin{lem}
\label{constantsheaf}
Let $\bP$ and $\Q$ be as described above. Let $\Gamma:\bP \rightarrow \Q$ be the presheaf morphism such that $\Gamma_R(b) = b \in \Q(R)$ for any object $R$ and $b \in B$. If $\E$ is a sheaf and $\Lambda:\bP \rightarrow \E$ is a morphism of presheaves, then there exist a unique sheaf morphism $\Delta:\Q \rightarrow \E$ such that the following diagram, of $\catset^\ra$, commutes.
\begin{tikzpicture}[baseline=(current bounding box.west)]
  \node[outer sep=0] {\begin{tikzcd}
\bP \arrow{r}{\Lambda} \arrow{d}{\Gamma} & \E \\
\Q \arrow[swap,dotted]{ru}{\Delta}
\end{tikzcd}};\end{tikzpicture}
That is to say, $\Gamma:\bP \rightarrow \Q$ is the sheafification of $\bP$.
\end{lem}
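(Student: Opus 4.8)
The plan is to use Corollary \ref{sheafofprodequalprodofsheaf} to convert the product structure recorded by an element of $\Q$ into an amalgamation in the sheaf $\E$. First I note that since $\E$ is a sheaf and the empty family covers the trivial algebra $0$, we have $\E(0) = 1$, so Corollary \ref{sheafofprodequalprodofsheaf} applies: for any object $R$ and any fundamental system of orthogonal idempotents $(e_i)_{i\in I}$ of $R$, writing $R_i = R/\ideal{1-e_i}$ and $\varphi_i:R\rightarrow R_i$ for the canonical maps, the map $\E(R)\rightarrow\prod_{i\in I}\E(R_i)$ is an isomorphism. I would then \emph{define} $\Delta_R(\sum_{i\in I} e_i b_i)$ to be the unique element $t\in\E(R)$ with $\E(\varphi_i)(t)=\Lambda_{R_i}(b_i)$ for every $i\in I$; such a $t$ exists and is unique precisely because of that isomorphism.

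The main obstacle is \textbf{well-definedness}: an element of $\Q(R)$ is a formal sum only up to the equality $\sum_i e_i b_i = \sum_j d_j c_j \iff \forall i,j\,[b_i\neq c_j \Rightarrow e_i d_j = 0]$, so I must check that equal formal sums yield the same $t$. I would pass to the common refinement $(e_i d_j)_{i\in I,\,j\in J}$, which is again a fundamental system of orthogonal idempotents of $R$ since $\sum_{i,j} e_i d_j = (\sum_i e_i)(\sum_j d_j)=1$ and distinct products are orthogonal, and set $R_{ij}=R/\ideal{1-e_i d_j}$. Because $\bP$ is constant, naturality of $\Lambda$ makes $\Lambda_{R_{ij}}(b_i)$ the restriction of $\Lambda_{R_i}(b_i)$ along the canonical map $R_i\rightarrow R_{ij}$, and likewise for $c_j$; hence the element $t$ built from $(e_i)$ restricts on $R_{ij}$ to $\Lambda_{R_{ij}}(b_i)$, while the element $t'$ built from $(d_j)$ restricts to $\Lambda_{R_{ij}}(c_j)$. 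When $e_i d_j\neq 0$ the equality condition forces $b_i=c_j$, so the two restrictions coincide; when $e_i d_j = 0$ the ring $R_{ij}$ is trivial and $\E(R_{ij})=1$, so they coincide trivially. Applying the isomorphism of Corollary \ref{sheafofprodequalprodofsheaf} for the refinement $(e_i d_j)$ then gives $t=t'$.

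The remaining verifications are routine. For \emph{naturality}, given $\psi:R\rightarrow R'$ I would observe that $\Q(\psi)(\sum_i e_i b_i)=\sum_i \psi(e_i) b_i$ and that the canonical map $R'\rightarrow R'/\ideal{1-\psi(e_i)}$ precomposed with $\psi$ kills $1-e_i$, hence factors through $\varphi_i$; naturality of $\Lambda$ then shows that $\E(\psi)(\Delta_R(s))$ has the defining restrictions of $\Delta_{R'}(\Q(\psi)(s))$, so the two agree by uniqueness. That $\Delta\Gamma=\Lambda$ is immediate, since $\Gamma_R(b)$ is the formal sum with the single idempotent $1$, whose decomposition is $\varphi=1_R$, forcing $\Delta_R(\Gamma_R(b))=\Lambda_R(b)$. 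For \emph{uniqueness}, if $\Delta'$ is any sheaf morphism with $\Delta'\Gamma=\Lambda$, then for $s=\sum_i e_i b_i$ the restriction $\Q(\varphi_i)(s)=b_i=\Gamma_{R_i}(b_i)$ together with naturality of $\Delta'$ gives $\E(\varphi_i)(\Delta'_R(s))=\Lambda_{R_i}(b_i)$; thus $\Delta'_R(s)$ has the same restrictions as $\Delta_R(s)$ and equals it by Corollary \ref{sheafofprodequalprodofsheaf}. These four properties say exactly that $\Gamma$ exhibits $\Q$ as the sheafification of $\bP$.
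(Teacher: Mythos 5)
Your proposal is correct and follows essentially the same approach as the paper: both use Corollary \ref{sheafofprodequalprodofsheaf} to show that the value $\Delta_A(\sum_i e_i b_i)=(\Lambda_{A_i}(b_i))_{i\in I}$ is forced, which gives uniqueness. You additionally verify well-definedness on the common refinement $(e_i d_j)$, naturality, and the triangle identity --- existence checks the paper's proof leaves implicit --- and these verifications are all sound.
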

\begin{proof}
Let $a =\sum_{i\in I} e_i b_i \in \Q(A)$ and let $A_i = A/\ideal{1-e_i}$ with canonical morphisms $\varphi_i: A \rightarrow A_i$. 

Let $\E$ and $\Lambda$ be as in the statement of the lemma. If there exist a sheaf morphism $\Delta:\Q \rightarrow \E$, then $\Delta$ being a natural transformation forces us to have for all  $i\in I$, $ \E(\varphi_i) \Delta_A = \Delta_{A_i} \Q(\varphi_i)$. By Lemma \ref{sheafofprodequalprodofsheaf}, we know that the map $d\in \E(A) \mapsto (\E(\varphi_i) d \in \E(A_i))_{i\in I}$ is an isomorphism. Thus it must be that $\Delta_A(a) = (\Delta_{A_i} \Q(\varphi_i)(a) )_{i\in I} = (\Delta_{A_i}(b_i))_{i\in I}$. But $\Delta_{A_i}(b_i) = \Delta_{A_i} \Gamma_{A_i}(b_i)$. To have $\Delta \Gamma = \Lambda$  we must have $\Delta_{A_i}(b_i)=\Lambda_{A_i}(b_i)$. Hence, we are forced to have $\Delta_A(a) = (\Lambda_{A_i}(b_i))_{i\in I}$. Note that $\Delta$ is unique since its value $\Delta_A(a)$ at any $A$ and $a$ is forced by the commuting diagram above.
\end{proof}

The constant presheaf of natural numbers $\N$ is the natural numbers object in $\catset^\ra$. We associate to $\N$ a sheaf $\sN$ as described above. From Lemma \ref{constantsheaf} one can easily show that $\sN$ satisfy the axioms of a natural numbers object in $\Sh(\raop,\bJ)$.

\begin{defn}
Let $\F[[X]]$ be the presheaf mapping each object $R$ of $\ra$ to $\F[[X]](R) =R[[X]]= R^\nats$ with the obvious restriction maps.
\end{defn} 

\begin{lem}
$\F[[X]]$ is a sheaf.
\end{lem}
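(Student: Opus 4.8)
The plan is to exploit the fact that a formal power series in $R[[X]]$ is nothing but a sequence of coefficients, so that $\F[[X]]$ is the $\nats$-indexed product $\prod_{n\in\nats}\F$ of copies of the forgetful sheaf, with restriction maps acting coefficientwise. Since $\F$ is already known to be a sheaf (Lemma \ref{isasheaf}) and the sheaf condition is an equalizer condition that is preserved by products, this immediately yields that $\F[[X]]$ is a sheaf. Concretely, I would spell this out by case analysis on the two clauses of Definition \ref{top}, reducing in each case to the corresponding step in the proof of Lemma \ref{isasheaf} applied to one coefficient at a time.

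For clause \ref{top}.\ref{genDecomp}, let $(e_i)_{i\in I}$ be a fundamental system of orthogonal idempotents of $R$, set $R_i = R/\ideal{1-e_i}$, and take a family $\{s_i\in R_i[[X]]\}_{i\in I}$. Since the trivial ring carries a single power series, $\F[[X]](0)=1$, so by Lemma \ref{splitfamilycompatible} every such family is compatible. Writing $s_i = \sum_{n} c_{i,n}X^n$, I would apply Corollary \ref{sheafofprodequalprodofsheaf} to the sheaf $\F$ for each fixed $n$: the tuple $(c_{i,n})_{i\in I}$ determines a unique $c_n\in R$ with $\varphi_i(c_n)=c_{i,n}$. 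The amalgamation is then $s=\sum_n c_n X^n$, and its uniqueness follows coefficientwise from the uniqueness in Corollary \ref{sheafofprodequalprodofsheaf}.

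For clause \ref{top}.\ref{genEmbed}, let $R[a]=R[X]/\ideal{p}$ be a separable extension and let $\{s\in R[a][[X]]\}$ be a compatible singleton. Using the pushout square of Corollary \ref{pushoutequalizerofextension} together with the simplified compatibility criterion for singleton covers, compatibility of $\{s\}$ means that the two restrictions $\vartheta(s)$ and $\zeta(s)$ coincide in $R[b,c][[X]]$. Reading this off coefficientwise gives $\vartheta(c_n)=\zeta(c_n)$ for every coefficient $c_n$ of $s$, so by the equalizer part of Corollary \ref{pushoutequalizerofextension} each $c_n$ lies in $R$; hence $s\in R[[X]]$, and since the restriction map along the injective $\varphi$ acts injectively on coefficients, it is the unique section restricting to $s$.

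The only point that needs care is the bookkeeping that translates the sheaf data for power series into coefficientwise sheaf data for $\F$: namely that the coefficient sequence is compatible both with the restriction maps and with the notion of compatibility used in the forcing clauses, so that a family of power series is compatible exactly when all of its coefficient families are. Once this is made explicit, each case becomes a coefficientwise instance of an already-proved amalgamation; and because each coefficient amalgamation is uniquely determined, assembling the sequence $(c_n)_{n\in\nats}$ requires no choice and no new algebraic input, so the argument is valid in the constructive metatheory.
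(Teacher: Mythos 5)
Your proof is correct and follows the same route as the paper, which simply declares the lemma ``immediate as a corollary of Lemma \ref{isasheaf}'': the point in both cases is that $\F[[X]](R)=R^{\nats}$ is the $\nats$-indexed product of copies of $\F$ with coefficientwise restrictions, so compatibility and amalgamation reduce coefficient by coefficient to the sheaf property of $\F$. Your elaboration of the two clauses of Definition \ref{top} is just the explicit unfolding of that observation.
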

\begin{proof}
The proof is immediate as a corollary of Lemma \ref{isasheaf}.
\end{proof}

\begin{lem}
\label{powerserieslemma}
The sheaf $\F[[X]]$ is naturally isomorphic to the sheaf $\F^{\sN}$.
\end{lem}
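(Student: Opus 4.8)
The plan is to compute the value of the exponential sheaf $\F^{\sN}$ at each object $R$ of $\ra$ and recognize it as $R^{\nats} = \F[[X]](R)$. Since $\F$ is a sheaf, the exponential taken in $\Sh(\raop,\bJ)$ agrees with the exponential of the underlying presheaves in $\catset^{\ra}$ (the presheaf exponential into a sheaf is already a sheaf, and it represents $\mathbf{A}\mapsto\Hom_{\Sh}(\mathbf{A}\times\sN,\F)$). Thus, writing $\mathbf{y}R = \Hom_{\ra}(R,-)$ for the presheaf represented by the object $R$ of $\raop$, the standard presheaf-exponential formula gives
\[
\F^{\sN}(R) \;\cong\; \Nat(\mathbf{y}R \times \sN,\, \F).
\]

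First I would reduce $\sN$ back to the constant presheaf $\N$. By the remark following Lemma \ref{constantsheaf}, $\sN$ is the associated sheaf $a\N$ of the constant presheaf $\N$ of natural numbers, and by subcanonicity $\mathbf{y}R$ is already a sheaf, so $a(\mathbf{y}R)=\mathbf{y}R$. As the associated-sheaf functor $a$ is left exact it preserves finite products, whence $\mathbf{y}R \times \sN \cong a(\mathbf{y}R \times \N)$; since $\F$ is a sheaf, the sheafification adjunction then yields
\[
\Nat(\mathbf{y}R \times \sN,\, \F) \;\cong\; \Nat\bigl(a(\mathbf{y}R\times\N),\,\F\bigr) \;\cong\; \Nat(\mathbf{y}R \times \N,\, \F).
\]
Now $\N$ is the constant presheaf at the set $\nats$, hence a coproduct $\N \cong \coprod_{n\in\nats}\mathbf{1}$ of copies of the terminal presheaf, and distributing the product over this coproduct gives $\mathbf{y}R\times\N \cong \coprod_{n\in\nats}\mathbf{y}R$. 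Applying the Yoneda lemma termwise,
\[
\Nat\Bigl(\coprod_{n\in\nats}\mathbf{y}R,\,\F\Bigr) \;\cong\; \prod_{n\in\nats}\Nat(\mathbf{y}R,\F) \;\cong\; \prod_{n\in\nats}\F(R) \;=\; R^{\nats} \;=\; \F[[X]](R).
\]

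Each isomorphism in this chain is natural in $R$ — the Yoneda isomorphism $\Nat(\mathbf{y}R,\F)\cong\F(R)$ is natural, and under it the restriction map of $\F^{\sN}$ along $\varphi\colon R\to R'$ corresponds to coefficientwise application of $\varphi$, which is exactly the restriction map of $\F[[X]]$. Hence the composite is a natural isomorphism of sheaves $\F[[X]] \isoarrow \F^{\sN}$, carrying a power series $\sum_n a_n X^n\in R[[X]]$ to the element of $\F^{\sN}(R)$ classifying the evaluation $\F[[X]]\times\sN\to\F$ that sends $(\sum_n a_n X^n,\ \sum_i e_i n_i)$ to $\sum_i e_i a_{n_i}$; one may equivalently define this evaluation map first and verify directly that its transpose is invertible, but the route above makes bijectivity transparent. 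I expect the only genuinely delicate point to be this first reduction — justifying that the sheaf exponential is computed as the presheaf exponential and that $a$ carries $\mathbf{y}R\times\N$ to $\mathbf{y}R\times\sN$ — since once $\sN$ is replaced by the coproduct presheaf $\N$ the remaining steps are purely formal consequences of the Yoneda lemma.
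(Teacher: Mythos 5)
Your proof is correct, but it takes a genuinely different route from the paper's. The paper proceeds by bare hands: it writes down an explicit evaluation-style transformation $\Theta:\F^{\sN}\rightarrow\F[[X]]$, $(\Theta\alpha)_C(n)=\alpha_{C,C}(1_C,n)$, and an explicit inverse $\Lambda$ defined on formal sums $\sum_i e_i n_i$ via the decomposition of Corollary \ref{sheafofprodequalprodofsheaf}, and then verifies $\Lambda\Theta=1$ and $\Theta\Lambda=1$ by direct computation. You instead reduce everything to the presheaf level: identify the sheaf exponential with the presheaf exponential (legitimate since $\F$ is a sheaf), use Lemma \ref{constantsheaf} to recognize $\sN$ as the associated sheaf of the constant presheaf $\N$, transpose across the sheafification adjunction using left exactness of the associated-sheaf functor and subcanonicity, and finish with $\N\cong\coprod_{n\in\nats}\mathbf{1}$ and Yoneda. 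The two isomorphisms agree: your composite sends $\alpha$ to $(\alpha_{R,R}(1_R,n))_{n}$, which is exactly the paper's $\Theta$. What your route buys is conceptual transparency — bijectivity is automatic and the argument generalizes to show $\F^{a B}\cong\prod_{B}\F$ for any constant presheaf at a discrete set $B$ — at the cost of invoking more machinery (existence and left exactness of sheafification, the fact that a presheaf exponential into a sheaf is a sheaf). What the paper's route buys is the explicit formula for $\Lambda$, which is not an afterthought: it is used concretely later (in Theorems \ref{dependentchoicefails} and \ref{notshownboolean}) to pass from a power series to an element of $\F^{\sN}$, so you would still need to unwind your abstract isomorphism to the formula you state at the end.
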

\begin{proof}
Let $C$ be an object of $\raop$. Since $\F^{\sN}(C) \cong \y_C \times \sN \rightarrow \F$, an element $\alpha_C \in \F^{\sN}(C)$ is a family of elements of the form $\alpha_{C,D}:\y_C(D) \times \sN(D) \rightarrow \F(D)$ where $D$ is an object of $\raop$. 
Define $\Theta:\F^{\sN} \rightarrow \F[[X]]$ as $(\Theta \alpha)_C (n) = \alpha_{C,C} (1_C, n)$. Define $\Lambda:\F[[X]]\rightarrow \F^{\sN}$ as 
\begin{align*}
(\Lambda\beta)_{C,D}(C\xrightarrow{\varphi} D, \sum_{i\in I} e_i n_i) = (\vartheta_i \varphi (\beta_C (n_i)))_{i\in I} \in \F(D)
\end{align*}
where $D \xrightarrow{\vartheta_i} D/\ideal{1-e_i}$ is the canonical morphism. Note that by Lemma \ref{sheafofprodequalprodofsheaf} one indeed has that $(\vartheta_i \varphi (\beta_C (n_i)))_{i\in I} \in \prod_{i\in I} \F(D_i) \cong \F(D)$. One can easily verify that $\Theta$ and $\Lambda$ are natural. It remains to show the isomorphism. One one hand we have
\begin{equation*}
\begin{split}
(\Lambda \Theta \alpha)_{C,D} (\varphi, \sum_{i\in I} e_i n_i)  & = (\vartheta_i \varphi ((\Theta \alpha)_C (n_i)))_{i\in I}
= (\vartheta_i \varphi (\alpha_{C,C} (1_C, n_i)))_{i\in I} \\
& = ((\alpha_{C,D_i} (\vartheta_i \varphi ,n_i)))_{i\in I} = \alpha_{C,D} (\varphi, \sum_{i\in I} e_i n_i)
\end{split}
\end{equation*}
Thus showing $\Lambda \Theta = 1_{\F^{\sN}}$. On the other hand, $(\Theta \Lambda \beta)_C (n)  = (\Lambda \beta)_{C,C}(1_C, n)= 1_C 1_C (\beta_C(n)) = \beta_C(n)$. Thus $\Theta \Lambda = 1_{\F[[X]]}$.
\end{proof}

\begin{lem}
The power series object $\F[[X]]$ is a ring object.
\end{lem}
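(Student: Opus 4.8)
The plan is to exhibit $\F[[X]]$ as a ring object by constructing the addition and multiplication natural transformations and verifying they are morphisms of sheaves satisfying the ring axioms. Since by Lemma \ref{powerserieslemma} we have a natural isomorphism $\F[[X]] \cong \F^{\sN}$, it suffices to work with the presheaf description: for each object $R$ of $\ra$, $\F[[X]](R) = R[[X]] = R^{\nats}$ is the ordinary ring of formal power series over the ring $R$, and the restriction maps along a morphism $\varphi:R \to C$ act coefficientwise as $R[[X]] \to C[[X]]$.

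First I would define the ring structure pointwise. For each object $R$, equip $\F[[X]](R) = R[[X]]$ with its usual commutative-ring structure: addition is coefficientwise, $(\sum a_n X^n) + (\sum b_n X^n) = \sum (a_n + b_n) X^n$, and multiplication is by the Cauchy product, $(\sum a_n X^n)(\sum b_n X^n) = \sum_k \bigl(\sum_{i+j=k} a_i b_j\bigr) X^k$. Here the coefficient operations $a_n + b_n$ and $a_i b_j$ are taken in the ring $R = \F(R)$, which is a commutative ring by Lemma \ref{isring}. The constants $0,1 \in R[[X]]$ are the usual ones.

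Next I would check naturality, i.e. that the maps $+ : \F[[X]] \times \F[[X]] \to \F[[X]]$ and $\cdot : \F[[X]] \times \F[[X]] \to \F[[X]]$ are presheaf morphisms. This amounts to observing that for any morphism $\varphi:R \to C$ of $\ra$, the induced map $R[[X]] \to C[[X]]$ is a ring homomorphism: since $\varphi$ is a $K$-algebra homomorphism on coefficients, it commutes with coefficientwise addition and with the Cauchy product (because $\varphi(\sum_{i+j=k} a_i b_j) = \sum_{i+j=k} \varphi(a_i)\varphi(b_j)$). The ring axioms themselves then hold objectwise, as each $R[[X]]$ is a genuine commutative ring, and since a presheaf morphism is an equality precisely when it is an equality at every object, the commutative-ring equations transfer directly to the level of natural transformations.

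The main obstacle, such as it is, is bookkeeping rather than mathematical difficulty: one must confirm that the objectwise operations really do assemble into \emph{morphisms of the topos} $\Sh(\raop,\bJ)$ and not merely of the underlying presheaf category. But this is automatic here, since $\F[[X]]$ has already been shown to be a sheaf, and a morphism of presheaves between sheaves is a morphism of sheaves; the product $\F[[X]] \times \F[[X]]$ computed in the sheaf topos agrees with the presheaf product. Hence no sheafification or amalgamation argument is required, and the verification reduces entirely to the fact that $R[[X]]$ is a commutative ring functorially in $R$.
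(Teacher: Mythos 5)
Your proof is correct and follows the same route as the paper, which simply records this lemma as a corollary of Lemma \ref{isring} (that $\F$ is a ring object): the ring structure on $\F[[X]](R)=R[[X]]$ is defined objectwise, restriction maps are coefficientwise ring homomorphisms, and since $\F[[X]]$ is already a sheaf no further amalgamation argument is needed. Your write-up just makes these routine verifications explicit.
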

\begin{proof}
A Corollary to Lemma \ref{isring}.
\end{proof}

\section{Choice axioms}
\label{AC}
The (\emph{external}) axiom of choice fails to hold (even in a classical metatheory) in the topos $\Sh(\raop,\bJ)$ whenever the field $K$ is not algebraically closed. To show this we will show that there is an epimorphism in $\Sh(\raop,\bJ)$ with no section. 

\begin{fact}
\label{epifact}
Let $\Theta: \bP \rightarrow \G$ be a morphism of sheaves on a site $(\mc{C},\bJ)$. Then $\Theta$ is an epimorphism if for each object $C$ of $\mc{C}$ and each element $c \in \G(C)$ there is a cover $S$ of $C$ such that for all $f:D\rightarrow C$ in the cover $S$ the element $c f$ is in the image of $\Theta_D$. \textup{\cite[Ch. 3]{sheavesgeometry}.}
\end{fact}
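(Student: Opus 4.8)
The plan is to verify directly the cancellation property defining an epimorphism in the sheaf category, using only the uniqueness (separation) half of the sheaf axiom on the codomain of a test pair. Recall that morphisms of sheaves are natural transformations and that $\Theta$ is an epimorphism in $\Sh(\mc{C},\bJ)$ exactly when, for every sheaf $\E$ and every pair $g,h:\G\rightarrow\E$ with $g\Theta=h\Theta$, one has $g=h$. So I would fix such a pair $g,h$, fix an object $C$, and aim to show $g_C(c)=h_C(c)$ for an arbitrary $c\in\G(C)$.

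First I would apply the hypothesis to $c$ to obtain a cover $\{f_i:D_i\rightarrow C\}_{i\in I}\in\bJ^\ast(C)$ and, for each $i$, an element $b_i\in\bP(D_i)$ with $\Theta_{D_i}(b_i)=c f_i$ (writing $c f_i$ for the restriction $\G(f_i)(c)$). Evaluating the equation $g\Theta=h\Theta$ at $D_i$ on the argument $b_i$ gives $g_{D_i}(\Theta_{D_i}(b_i))=h_{D_i}(\Theta_{D_i}(b_i))$, that is $g_{D_i}(c f_i)=h_{D_i}(c f_i)$.

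Next I would invoke naturality of $g$ and $h$: the naturality square for $f_i$ rewrites these as $g_{D_i}(c f_i)=(g_C(c))f_i$ and $h_{D_i}(c f_i)=(h_C(c))f_i$. Hence the two elements $g_C(c),h_C(c)\in\E(C)$ have equal restrictions along every morphism of the covering family $\{f_i\}_{i\in I}$. Since the restrictions of a single section always form a compatible family, both $g_C(c)$ and $h_C(c)$ are amalgamations of the one compatible family $\{(g_C(c))f_i\}_{i\in I}$; uniqueness of amalgamation for the sheaf $\E$ then forces $g_C(c)=h_C(c)$. As $C$ and $c$ were arbitrary, $g=h$, so $\Theta$ is epic.

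The only step demanding real care is the last one, where the separation property is used for a cover $S\in\bJ^\ast(C)$ rather than for an elementary cover in $\bJ(C)$. This is legitimate because the sheaf condition for $(\mc{C},\bJ)$ propagates to the closure $\bJ^\ast$ through its three defining clauses --- the same propagation underlying the rules \fbox{M} and \fbox{LC} --- and it is this extended uniqueness that the argument consumes. The remainder is a routine diagram chase, so this compatibility of the coverage with its closure is the one point I would check explicitly.
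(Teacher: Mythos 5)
Your argument is correct. Note that the paper itself offers no proof of this statement: it is recorded as a \emph{Fact} with a citation to Mac~Lane--Moerdijk, so there is nothing internal to compare against; what you have written is essentially the standard proof of the ``locally surjective implies epi'' direction of that cited result. The chain of reductions is sound: testing against an arbitrary pair $g,h:\G\rightarrow\E$ with $g\Theta=h\Theta$, pulling $c$ back along the cover to land in the image of $\Theta$, using naturality to identify $g_{D_i}(c f_i)$ with $(g_C(c))f_i$, and then invoking uniqueness of amalgamation in $\E$ to force $g_C(c)=h_C(c)$. You are also right to single out the only delicate point, namely that the hypothesis hands you a cover in $\bJ^\ast(C)$ while the sheaf axiom is stated for elementary covers in $\bJ(C)$; the propagation of the separation property through the three closure clauses (identity, elementary cover, composition) is a routine induction, and the paper implicitly relies on the same propagation when it formulates \fbox{LC} and the forcing clauses over $\bJ^\ast$. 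One could instead quote the general topos-theoretic machinery (image factorization and the associated sheaf functor) as the cited source does in part, but your elementary separated-presheaf argument is self-contained and buys a proof that uses nothing beyond the definitions already set up in Section~2 of the paper.
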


\begin{lem}
Let $K$ be a field of characteristic $0$ not algebraically closed. There is an epimorphism in $\Sh(\raop,\bJ)$ with no section.
\end{lem}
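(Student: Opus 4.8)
The plan is to manufacture, from a witness of non-closure, the subsheaf of roots of a polynomial inside $\F$ and to show that its projection to the terminal sheaf is an epimorphism that cannot be split.

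First I would extract a suitable polynomial. Since $K$ is not algebraically closed there is a monic non-constant $f \in K[X]$ with no root in $K$. Passing to its separable associate $h$ (Lemma \ref{squarefree}), which is monic, separable, and --- because $\mathrm{char}\,K = 0$ --- still non-constant, and noting that $h \mid f$ forces every root of $h$ to be a root of $f$, I obtain a monic separable non-constant $p := h \in K[X]$ with no root in $K$. Let $V \hookrightarrow \F$ be the equalizer of the sheaf maps $x \mapsto p(x)$ and $x \mapsto 0$, i.e. the subpresheaf with $V(R) = \{r \in R \mid p(r) = 0\}$; as an equalizer of maps of sheaves it is itself a sheaf. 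The claim is that the unique map $V \to 1$ is an epimorphism with no section.

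To see it is epi I would apply Fact \ref{epifact} with $\G = 1$: for an object $C$ and the unique $\ast \in 1(C)$ it suffices to produce a cover of $C$ along each map of which $V$ is inhabited. Because $p$ is monic, non-constant and separable over $K$ it remains so over $C$ (the witness $rp + sp' = 1$ transports along the structure map $K \to C$, exactly as in the proof that $\bJ$ is a coverage), so $C[\beta] := C[X]/\ideal{p}$ is a separable extension and $\{C \to C[\beta]\} \in \op{\bJ}(C)$ by Definition \ref{top}.\ref{genEmbed}. Since $p(\beta) = 0$ we have $\beta \in V(C[\beta])$, so the restriction of $\ast$ lies in the image of $V(C[\beta]) \to 1(C[\beta])$; dualized, this single-arrow family is a cover of $C$ witnessing the epimorphism condition.

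For the absence of a section I would use that the terminal sheaf is representable: since $K$ is initial in $\ra$, $\y_K(R)$ is a singleton for every $R$, whence $1 \cong \y_K$. By the Yoneda lemma (applicable since $\F$ is a sheaf) global sections of $\F$ correspond to elements of $\F(K) = K$, so a hypothetical section $\sigma : 1 \to V \hookrightarrow \F$ would pick out $\sigma_K(\ast) \in V(K) \subseteq K$, that is a root of $p$ in $K$ --- contradicting the choice of $p$. (Equivalently one could take the representable map $\y_{K[\alpha]} \to \y_K = 1$ induced by the cover $K \to K[\alpha]$, $K[\alpha]=K[X]/\ideal{p}$, where already $\mathbf{Hom}(1,\y_{K[\alpha]}) = \mathbf{Hom}_{\ra}(K[\alpha],K)$ is empty for lack of a root.) The main obstacle is precisely this no-section step, which rests entirely on the computation that the global sections of $\F$ are exactly the elements of $K$, turning ``no global root'' into the concrete ``no root in $K$''; the technical enabler for the epimorphism half is the reduction to a \emph{separable} $p$, without which adjoining a root would not be a legitimate cover in the topology of Definition \ref{top}, and where characteristic $0$ is used to keep the separable associate non-constant. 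One should also read ``not algebraically closed'' constructively as supplying the concrete witnessing polynomial $f$ above.
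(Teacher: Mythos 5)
Your proof is correct, but it takes a different route from the paper's. The paper fixes $f=X^n+\sum_{i=1}^n r_iX^{n-i}$ with no root in $K$ and exhibits the endomorphism $\Lambda:\F\rightarrow\F$, $\Lambda_C(c)=c^n+\sum_{i=1}^{n-1}r_ic^{n-i}$, as the unsplit epimorphism: surjectivity onto a given $d\in\F(C)$ requires factoring $g=X^n+\sum_{i=1}^{n-1}r_iX^{n-i}-d$, whose coefficients live in the regular ring $C$, so the paper must invoke Corollary \ref{squarefreereg} to first cover $C$ and extract a separable associate on each piece before adjoining a root; the absence of a section is then read off by evaluating a hypothetical section at $-r_n\in\F(K)$. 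You instead fix the polynomial once over the base field, take its separable associate $p$ there (so only Lemma \ref{squarefree} over a field is needed, with no further splitting of $C$), form the subsheaf $V\subseteq\F$ of roots of $p$, and show $V\rightarrow 1$ is epi via the single-arrow cover $C\rightarrow C[X]/\ideal{p}$ and has no section because $\Hom(1,V)\cong V(K)=\emptyset$ by $1\cong\y_K$ and Yoneda (the coverage being subcanonical). Your version is technically lighter on the epimorphism side, since the covering data does not depend on an arbitrary target element; what the paper's choice buys is that $\Lambda$ is the interpretation of a term of the language, which sets up the remark immediately following the lemma about an internally valid $\forall x\exists y\,\phi(x,y)$ admitting no choice function $f$ with $\forall x\,\phi(x,f(x))$ --- your $V\rightarrow 1$ proves the stated lemma just as well but does not directly furnish that example. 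Both no-section arguments rest on the same computation, namely that global sections of $\F$ are exactly elements of $K$.
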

\begin{proof}
Let $f=X^n + \sum_{i=1}^n r_i X^{n-i}$ be a non-constant polynomial for which no root in $K$ exist. w.l.o.g. we assume $f$ separable. One can construct $\Lambda: \F \rightarrow \F$ defined by $\Lambda_C(c) = c^n + \sum_{i=1}^{n-1} r_i c^{n-i} \in C$. 
Given $d\in \F(C)$, let $g= X^n + \sum_{i=1}^{n-1} r_i X^{n-i}-d$. By Corollary \ref{squarefreereg} there is a cover $\{C_\ell \xrightarrow{\;\varphi_\ell\;} C\}_{\ell\in L} \in \bJ^\ast(C)$ with $h_\ell \in C_\ell[X]$ a separable non-constant polynomial dividing $g$. Let $C_\ell[x_\ell] = C_\ell[X]/\ideal{h_\ell}$ one has a singleton cover $\{C_\ell[x_\ell]\xrightarrow{\vartheta_\ell} C_\ell\}$ and thus a composite cover $\{C_\ell[x_\ell]\xrightarrow{\vartheta_\ell \varphi_\ell} C\}_{\ell \in L}\in \bJ^\ast(C)$. Since $x_\ell$ is a root of $h_\ell \mid g$ we have $\Lambda_{C_\ell[x_\ell]}(x_\ell) = x_\ell^n + \sum_{i=1}^{n-1} r_i x_\ell^{n-i} = d$ or more precisely $\Lambda_{C_\ell[x_\ell]}(x_\ell) = d \varphi_\ell \vartheta_\ell$. Thus, $\Lambda$ is an epimorphism (by Fact \ref{epifact}) and it has no section, for if it had a section $\Psi:\F \rightarrow \F$ then one would have $\Psi_K(-r_n) = a \in K$ such that $a^n + \sum_{i=1}^n r_i a^{n-i}= 0$ which is not true by assumption.
\end{proof}

\begin{thm}
\label{choicefails}
Let $K$ be a field of characteristic $0$ not algebraically closed. The axiom of choice fails to hold in the topos $\Sh(\raop,\bJ)$.\qed
\end{thm}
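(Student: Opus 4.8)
The plan is to reduce the theorem immediately to the preceding lemma via the standard topos-theoretic characterization of choice. Recall that, as set up at the start of this section, the (external) axiom of choice for the topos $\Sh(\raop,\bJ)$ is precisely the statement that every epimorphism of $\Sh(\raop,\bJ)$ admits a section \cite{sheavesgeometry}. Granting this characterization, the theorem is a direct corollary: the preceding lemma produces, under the hypothesis that $K$ is not algebraically closed, a concrete epimorphism $\Lambda:\F\rightarrow\F$ in $\Sh(\raop,\bJ)$ possessing no section. Hence the defining property of the external axiom of choice fails, and so the axiom of choice does not hold in the topos.

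Concretely, I would first restate the equivalence ``external AC $\iff$ every epimorphism splits'' as the form of the axiom under discussion, so that the terminology matches the opening of the section. I would then simply invoke the previous lemma to supply the witnessing epimorphism $\Lambda$ together with the argument that a hypothetical section $\Psi$ would force an element $a\in K$ with $a^{n}+\sum_{i=1}^{n} r_i a^{n-i}=0$, contradicting the choice of $f$ as a polynomial without a root in $K$. Because that construction is carried out without appeal to any nonconstructive principle in the metatheory, the failure is unconditional on whether the metatheory is classical, which is exactly the parenthetical claim made when the section opened.

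There is essentially no obstacle to overcome here; the entire mathematical content resides in the preceding lemma, and the \qed attached to the statement reflects that the theorem is merely its packaging. The only point meriting care is to be explicit that it is the \emph{external} form of the axiom (``every epimorphism in $\mc{E}$ has a section'') that is being negated, since this is the form directly contradicted by an epimorphism lacking a section; one should not conflate it with an internal choice principle, whose failure would be a strictly stronger assertion not claimed by this theorem.
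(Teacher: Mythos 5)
Your proposal matches the paper exactly: the theorem carries a \qed because it is an immediate corollary of the preceding lemma, which exhibits the epimorphism $\Lambda:\F\rightarrow\F$ without a section, and the external axiom of choice is precisely the assertion that every epimorphism splits. Your additional care in distinguishing the external from the internal form of choice is consistent with the paper's framing and introduces no discrepancy.
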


We note that in Per Martin-L\"of type theory one can show that (see \cite{martinloftypes})
\begin{align*}
&(\prod x\in A)(\sum y\in B[x])C[x,y] \Rightarrow (\sum f \in (\prod x \in A) B[x]) (\prod x\in A) C[x,f(x)]
\end{align*}
As demonstrated in the topos $\Sh(\raop,\bJ)$ we have an example of an intuitionistically valid formula of the form $\forall x \exists y \phi(x,y)$ where no  function $f$ exist for which $\exists f \forall x \phi(x,f(x))$ holds.

 We demonstrate further that when the base field is $\rationals$ the weaker axiom of \emph{dependent choice} does not hold (internally) in the topos $\Sh(\raqop,\bJ)$. For a relation $R \subset Y\times Y$ the axiom of dependent choice is stated as
\begin{equation}
\tag{ADC}
\forall x \exists y R(x,y) \Rightarrow \forall x \exists g\in Y^N [g(0) = x \land \forall n R(g(n),g(n+1))]
\end{equation}
\begin{thm}
\label{dependentchoicefails}
$\Sh(\raqop,\bJ)\forces \neg \textup{ADC}$.
\end{thm}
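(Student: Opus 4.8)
The plan is to exhibit a single total relation admitting no choice sequence, exploiting the fact that $\rationals$ carries an infinite tower of irreducible polynomials of unbounded degree. I take the type $Y$ to be the field type $F$, interpreted by $\F$, and let $R(x,y)$ be the formula $y\cdot y = x$, so that $R$ is the subobject $\{(x,y)\mid y^2=x\}$ of $\F\times\F$. The premise $\forall x\exists y\,R(x,y)$ of ADC is valid in $\Sh(\raqop,\bJ)$: since $\F$ is algebraically closed, every element has a square root (apply algebraic closure to the monic polynomial $Z^2-x$). It therefore suffices to refute the conclusion $\forall x\exists g\in\F^{\sN}\,[g(0)=x\land\forall n\,R(g(n),g(n+1))]$, and the role of $\rationals$ will be to force any such $g$ to encode algebraic data of unbounded degree.

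To prove $\Sh(\raqop,\bJ)\forces\neg\textup{ADC}$ I unfold the $\Rightarrow$ clause at the terminal sheaf, reducing the claim to: no nontrivial object $D$ of $\raq$ forces $\textup{ADC}$. Suppose for contradiction that a nontrivial $D$ does. Since the premise is valid at $D$, the $\Rightarrow$ clause (at $1_D$) yields $D\forces\forall x\exists g[\dots]$, and instantiating the outer $\forall x$ at the element $2\in D=\F(D)$ gives $D\forces\exists g[g(0)=2\land\forall n\,R(g(n),g(n+1))]$. By the $\exists$ clause there is a cover $\{D_i\to D\}\in\bJ^\ast(D)$ and for each $i$ a section $g_i\in\F^{\sN}(D_i)$ with $D_i\forces g_i(0)=2$ and $D_i\forces\forall n\,R(g_i(n),g_i(n+1))$. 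Because $D$ is nontrivial it is not covered by the empty family, and a short inspection of Definition \ref{top} shows that a nontrivial algebra cannot be covered by trivial pieces (an idempotent decomposition into trivial pieces forces all $e_i=0$, and a separable extension of a nontrivial ring is nontrivial); hence I may fix one nontrivial piece $C:=D_i$ with $g:=g_i$, a nonzero finite-dimensional $\rationals$-algebra.

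Using the isomorphism $\F^{\sN}\cong\F[[X]]$ of Lemma \ref{powerserieslemma}, the section $g$ is a sequence $(c_n)_n\in C^{\nats}=\F[[X]](C)$, and evaluation at a standard natural number $\bar n\in\sN(C)$ returns the coefficient $c_n$. Instantiating the forcing of $\forall n$ at $1_C$ and each standard $\bar n$, and using that equality in $\F$ is interpreted by literal equality, I obtain $c_0=2$ and $c_{n+1}^2=c_n$ in $C$ for every $n$, whence $c_n^{2^n}=2$. Thus $c_n$ is a root of $Z^{2^n}-2\in\rationals[Z]$, which is irreducible over $\rationals$ by Eisenstein's criterion at the prime $2$. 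Consequently the $\rationals$-algebra map $\rationals[Z]/\ideal{Z^{2^n}-2}\to C$ sending $Z\mapsto c_n$ is injective, its kernel being a proper ideal of a field; hence $1,c_n,\dots,c_n^{2^n-1}$ are $\rationals$-linearly independent in $C$ and $\dim_\rationals C\ge 2^n$.

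This is the contradiction: every object of $\raq$ is a finite-dimensional $\rationals$-vector space, so $\dim_\rationals C$ is a fixed finite number $N$, yet the above gives $2^n\le N$ for all $n$. Hence no nontrivial $D$ forces $\textup{ADC}$, i.e. $\Sh(\raqop,\bJ)\forces\neg\textup{ADC}$. The main obstacle is the third step: correctly reading off the concrete recurrence $c_{n+1}^2=c_n$ from the Kripke--Joyal meaning of $g\in\F^{\sN}$ and of $\forall n\,R(g(n),g(n+1))$ — this is precisely where Lemma \ref{powerserieslemma} and the identification of the values of $g$ at standard naturals with power-series coefficients do the work — together with the verification that the cover of the nontrivial $D$ must contain a nontrivial piece on which the argument can be run.
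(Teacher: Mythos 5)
Your proof is correct and follows essentially the same route as the paper: the relation $y^2=x$, instantiation at $x=2$, the identification $\F^{\sN}\cong\F[[X]]$ from Lemma \ref{powerserieslemma} to read off the recurrence $c_{n+1}^2=c_n$, Eisenstein irreducibility of $Z^{2^n}-2$, and the contradiction with finite $\rationals$-dimension. The only cosmetic difference is the endgame: the paper shows directly that every covering piece $A_i$ must be trivial and concludes $C\forces\bot$ by local character, whereas you select a nontrivial piece and derive a numerical absurdity --- equivalent here since triviality of a finitely presented $\rationals$-algebra is decidable and covers of a nontrivial object must contain a nontrivial piece, as you note.
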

\begin{proof}
Consider the binary relation on the algebraically closed object $\F$ defined by the characteristic function $\phi(x,y) := y^2-x =0$. Assume $C \forces \textup{ADC}$ for some object $C$ of $\ra$. Since $C\forces \forall x \exists y [y^2-x = 0]$ we have 
$C\forces \forall x \exists g\in \F^{\sN} [g(0) = x \land \forall n [g(n)^2=g(n+1)]]$.
That is for all morphisms $C\xrightarrow{\zeta} A$ of $\ra$ and elements $a \in \F(A)$ one has
$
A\forces \exists g\in \F^{\sN} [g(0) = a \land \forall n [g(n)^2 = g(n+1)]]
$.
Taking $a = 2$ we have $A\forces \exists g\in \F^{\sN} [g(0) = 2 \land \forall n [g(n)^2 = g(n+1)]]$. Which by \fbox{$\exists$} implies the existence of a cocover $\{\eta_i:A \rightarrow A_i\mid i\in I\}$ and power series $\alpha_i \in \F^{\sN}(A_i)$ such that
$
A_i\forces \alpha_i(0) = 2 \land \forall n [\alpha_i(n)^2 = \alpha_i(n+1)]]
$.
By Lemma \ref{powerserieslemma} we have $\F^{\sN}(A_i) \cong A_i[[X]]$ and thus the above forcing implies the existence of a series $\alpha_i=2 + 2^{1/2} + ... + 2^{1/2^j} + ... \in  A_i[[X]]$. But this holds only if $A_i$ contains a root of $X^{2^j} - 2$ for all $j$ which implies $A_i$ is trivial as will shortly show after the following remark.

Consider an algebra $R$ over $\rationals$. Assume $R$ contains a root of $X^{2^n} - 2$ for some $n$. Then letting $\rationals[x]=\rationals[X]/\ideal{X^{2^n} - 2}$, one will have a homomorphism $\xi: \rationals[x] \rightarrow R$. By Eisenstein's criterion the polynomial $X^{2^n} - 2$ is irreducible over $\rationals$, making $\rationals[x]$ a field of dimension $2^n$ and $\xi$ either an injection with a trivial kernel or $\xi = \rationals[x]\rightarrow 0$. 

Now we continue with the proof. Until now we have shown that for all $i\in I$, the algebra $A_i$ contains a root of $X^{2^j} - 2$ for all $j$. For  each $i\in I$,  let $A_i$ be of dimension $m_i$ over $\rationals$. We have that $A_i$ contains a root of $X^{2^{m_i}} - 2$ and we have a homomorphism $\rationals(\sqrt[2^{m_i}]{2}) \rightarrow A_i$ which since $A_i$ has dimension $m_i < 2^{m_i}$ means that $A_i$ is trivial for all $i \in I$. Hence, $A_i \forces \bot$ and consequently $C \forces \bot$. We have shown that for any object $D$ of $\raqop$ if $D\forces \textup{ADC}$ then $D\forces \bot$. Hence $\Sh(\raqop,\bJ)\forces \neg \textup{ADC}$.
\end{proof}
As a consequence we get that the \emph{internal} axiom of choice does not hold in $\Sh(\raqop,\bJ)$. 


\section{Eliminating the algebraic closure assumption}
\label{eliminating}

Let $K$ be a field of characteristic $0$. We consider a typed language $\mc{L}[N,F]_K$ of the form described in Section \ref{prel} with two basic types $N$ and $F$ and the elements of the field $K$ as its set of constants. Consider a theory $T$ in the language $\mc{L}[N,F]_K$, such that $T$ has as an axiom every atomic formula or the negation of one valid in the field $K$, $T$ equips $N$ with the (Peano) axioms of natural numbers and equips $F$ with the axioms of a field containing $K$.  If we interpret the types $N$ and $F$ by the objects $\sN$ and $\F$, respectively, in the topos $\Sh(\raop,\bJ)$ then we have, by the results proved earlier, a model of $T$ in $\Sh(\raop,\bJ)$. Let $\mathrm{AlgCl}$ be the axiom schema of algebraic closure with quantification over the type $F$, then one has that $T + \mathrm{AlgCl}$ has a model in $\Sh(\raop,\bJ)$ with the same interpretation. Let $\phi$ be a sentence in the language such that $T + \mathrm{AlgCl} \vdash \phi$ in IHOL deduction system. By soundness \cite{awodey97logicin} one has that $\Sh(\raop,\bJ) \forces \phi$, i.e. for all finite dimensional regular algebras $R$ over $K$, $R\forces \phi$ which is then a constructive interpretation of the existence of the algebraic closure of $K$.

This model can be implemented, e.g. in Haskell. In the paper \cite{dynnewton} by the authors, an algorithm for computing the Puiseux expansions of an algebraic curve based on this model is given. The statement with the assumption of algebraic closure is:

\textit{``
Let $K$ be a field of characteristic $0$ and $G(X,Y)=Y^n + \sum_{i=1}^n b_i(X) Y^{n-i} \in K[[X]][Y]$  a monic, non-constant polynomial separable over $K((X))$. Let $F$ be the algebraic closure of $K$, we have a positive integer $m$ and a factorization $G(T^m,Y) = \prod_{i=1}^n (Y-\alpha_i)$ with $\alpha_i \in F[[T]]$ "}

We can then extract the following computational content

\textit{``
Let $K$ be a field of characteristic $0$ and $G(X,Y)=Y^n + \sum_{i=1}^n b_i(X) Y^{n-i} \in K[[X]][Y]$  a monic, non-constant polynomial separable over $K((X))$. Then there exist a (von Neumann) regular algebra $R$ over $K$ and a positive integer $m$  such that $G(T^m,Y) = \prod_{i=1}^n (Y-\alpha_i)$ with $\alpha_i \in R[[T]]$
"}

For example applying the algorithm to $G(X,Y) = Y^4 - 3 Y^2 + X Y + X^2 \in \rationals[X,Y]$ we get a regular algebra $\rationals[b,c]$ with $b^2-13/36=0$ and $c^2-3=0$ and a factorization
\begin{align*}
& G(X,Y) =\\
& \;\; (Y+(-b-\tfrac{1}{6})X+(-\tfrac{31}{351}b-\tfrac{7}{162})X^3+(-\tfrac{1415}{41067}b-\tfrac{29}{1458})X^5+...) \\
& \;\;   (Y+(b-\tfrac{1}{6})X+(\tfrac{31}{351}b-\tfrac{7}{162})X^3+(\tfrac{1415}{41067}b-\tfrac{29}{1458})X^5+...) \\
&\;\;   (Y-c+\tfrac{1}{6}X+\tfrac{5}{72}cX^2+\tfrac{7}{162}X^3+\tfrac{185}{10368}cX^4+\tfrac{29}{1458}X^5+...)\\
&\;\;   (Y+c+\tfrac{1}{6}X-\tfrac{5}{72}cX^2+\tfrac{7}{162}X^3-\tfrac{185}{10368}cX^4+\tfrac{29}{1458}X^5+...)
\end{align*}

Another example of a possible application of this model is as follows: suppose one want to show that 

\textit{``For discrete field $K$, if $f \in K[X,Y]$ is smooth, i.e. $1\in \ideal{f, f_x, f_Y}$, then $K[X,Y]/\ideal{f}$ is a Pr\"{u}fer ring.``}

To prove that a ring is Pr\"{u}fer one needs to prove that it is arithmetical, that is $\forall x,y \exists u, v, w[yu = vx \land yw=(1-u)x]$. Proving that $K[X,Y]/\ideal{f}$ is arithmetical is easier in the case where $K$ is algebraically closed \cite{curvesprufer}. Let $\F$ be the algebraic closure of $K$ in $\Sh(\raop,\bJ)$. Now $\F[X,Y]/\ideal{f}$ being arithmetical amounts to having a solution $u$,$v$, and $w$ to a linear system $yu = vx$, $yw=(1-u)x$. Having obtained such solution, by Rouch\'{e}--Capelli--Fonten\'{e} theorem we can conclude that the system have a solution in $K[X,Y]/\ideal{f}$.
\section{The logic of $\Sh(\raop, \bJ)$}
\label{sec:booleaness}
In this section we will demonstrate that in a \emph{classical metatheory} one can show that the topos $\Sh(\raop, \bJ)$ is boolean. In fact we will show that, in a classical metatheory, the boolean algebra structure of the subobject classifier is the one specified by the boolean algebra of idempotents of the algebras in $\ra$. Except for Theorem \ref{notshownboolean} the reasoning in this section is classical. Recall that the idempotents of a commutative ring form a boolean algebra with the meaning of the logical operators given by : $\top = 1$, $\bot = 0$, $e_1 \land e_2 = e_1 e_2$, $e_1 \lor e_2 = e_1 + e_2 - e_1 e_2$ and $\neg e = 1-e$. We write $e_1 \leq e_2$ iff $e_1 \land e_2 = e_1$ and $e_1 \lor e_2 = e_2$

A sieve $S$ on an object $C$ is a set of morphisms with codomain $C$ such that if $g \in S$ and $\codom{h} = \dom{g}$ then $g h \in S$. A cosieve is defined dually to a sieve. A sieve $S$ is said to cover a morphism $f:D\rightarrow C$ if $f^*(S)=\{g\mid \codom{g}=D, fg \in S\}$ contains a cover of $D$. Dually, a cosieve $M$ on $C$ is said to cover a morphism $g:C\rightarrow D$ if the sieve dual to $M$ covers the morphism dual to $g$. 

\begin{defn}[Closed cosieve]
A sieve $M$ on an object $C$ of $\mc{C}$ is closed if for all $f$ with $\codom{f} = C$ if $M$ covers $f$ then $f \in M$. A closed cosieve on an object $C$ of $\op{\mc{C}}$ is the dual of a closed sieve in $\mc{C}$.
\end{defn}

\begin{fact}[Subobject classifier]
The subobject classifier in the category of sheaves on a site $(\mc{C},\bJ)$ is the presheaf $\Omega$ where for an object $C$ of $\mc{C}$ the set $\Omega(C)$ is the set of closed sieves on $C$ and for each $f: D\rightarrow C$ we have a restriction map $M \mapsto \{h \mid \codom{h} = D, fh \in M\}$.
\end{fact}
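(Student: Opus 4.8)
The plan is to verify directly that $\Omega$ satisfies the defining universal property of a subobject classifier in $\mc{E}=\Sh(\mc{C},\bJ)$: namely, to exhibit a monomorphism $\true:\mathbf{1}\to\Omega$ and to show that every mono of sheaves $A\hookrightarrow B$ admits a unique \emph{characteristic} morphism $\chi:B\to\Omega$ for which $A$ is recovered as the pullback of $\true$ along $\chi$. I take $\true_C(\ast)$ to be the maximal sieve $t_C=\{f\mid\codom{f}=C\}$ on $C$, which is trivially closed, natural in $C$, and defines a monomorphism since each component is a map out of a singleton. For a subsheaf $A\subseteq B$ I define, for each object $C$ and each $b\in B(C)$,
\[
\chi_C(b)\;=\;\{\,f:D\to C\mid B(f)(b)\in A(D)\,\}.
\]

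First I would dispatch the easy structural facts. That $\chi_C(b)$ is a sieve is immediate from $A$ being a subpresheaf, hence closed under restriction. Naturality, i.e. $\chi_{C'}(B(k)(b))=k^\ast(\chi_C(b))$ for $k:C'\to C$, reduces to the functoriality identity $B(kf)=B(f)B(k)$. The pullback property is equally quick: since a sieve is maximal exactly when it contains an identity, $\chi_C(b)=t_C$ holds iff $1_C\in\chi_C(b)$ iff $b\in A(C)$, which is precisely the assertion that $A$ is the pullback of $\true$ along $\chi$.

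The crux is the verification that $\chi_C(b)$ is a \emph{closed} sieve, and it is here that the sheaf condition on $A$ is used essentially. Suppose $\chi_C(b)$ covers a morphism $g:D\to C$; by definition this yields a cover $\{h_i:D_i\to D\}_{i}\in\bJ^\ast(D)$ with each $gh_i\in\chi_C(b)$, that is $B(gh_i)(b)\in A(D_i)$. Writing $b'=B(g)(b)\in B(D)$, the elements $B(h_i)(b')=B(gh_i)(b)$ form a compatible family lying in $A$; since $A$ is a sheaf they amalgamate to some $a\in A(D)$, while $b'$ amalgamates the same family in $B$. Because $B$ is separated the two amalgamations agree, so $b'=a\in A(D)$, giving $g\in\chi_C(b)$ as required. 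Uniqueness of $\chi$ then follows formally: any characteristic map $\psi$ must satisfy $f\in\psi_C(b)\Leftrightarrow\psi_D(B(f)(b))=t_D\Leftrightarrow B(f)(b)\in A(D)$ by naturality and the pullback property, forcing $\psi=\chi$.

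It remains to confirm that $\Omega$ is genuinely a sheaf, so that all of the above takes place inside $\mc{E}$ rather than merely in $\catset^{\op{\mc{C}}}$. Given a cover of $C$ and a compatible family of closed sieves on its members, one amalgamates them into the closed sieve on $C$ obtained by closing the sieve generated by their pushforwards along the cover, using that the closure operation is idempotent and commutes with restriction along morphisms of the site. I expect this verification, together with the closedness argument above, to be the main technical obstacle, since both hinge on interchanging the closure operation with pullback of sieves; the remaining steps are formal consequences of functoriality and of the pullback square.
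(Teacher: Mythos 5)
The paper offers no proof of this statement: it is recorded as a \emph{Fact}, i.e.\ as the standard description of the subobject classifier of a Grothendieck topos (Mac Lane--Moerdijk, Ch.~III), so there is no in-paper argument to compare yours against. Your proposal is a correct reconstruction of that standard proof: taking $\true_C(\ast)$ to be the maximal sieve, defining $\chi_C(b)=\{f:D\to C\mid B(f)(b)\in A(D)\}$, proving closedness of $\chi_C(b)$ by amalgamating the compatible family $B(gh_i)(b)$ inside the subsheaf $A$ and using separatedness of $B$, reading off the pullback square from ``a sieve is maximal iff it contains an identity'', and deducing uniqueness of $\chi$ formally from naturality --- all of this is the right argument and is carried out correctly. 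Two points should be tightened before this counts as a complete proof. First, in the closedness step you apply the amalgamation property to a cover in $\bJ^\ast(D)$, whereas the sheaf condition is stated for elementary covers in $\bJ(D)$; the extension to $\bJ^\ast$ holds by induction on the closure clauses but should be stated. Second, the sheaf condition for $\Omega$ is only gestured at, and you rightly identify it as the main technical burden. It does go through along the lines you indicate: for a cover $\{f_i:C_i\to C\}$ and compatible closed sieves $M_i$, let $N$ be the sieve generated by $\{f_ih\mid h\in M_i\}$ and $M$ its closure; then $f_i^\ast(N)=M_i$ (the inclusion $f_i^\ast(N)\subseteq M_i$ is exactly where compatibility of the $M_i$ is used), closure commutes with pullback of sieves since $(fh)^\ast=h^\ast f^\ast$, so $f_i^\ast(M)=M_i$, and uniqueness of the amalgamation follows from the coverage axiom, which shows that any closed $M'$ restricting to the $M_i$ satisfies $N\subseteq M'$ and $M'\subseteq M$. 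As written, however, this part of your argument is a promissory note rather than a verification, and separatedness of $\Omega$ is not mentioned at all.
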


\begin{lem}
\label{closedsievefield}
Let $R$ be an object of $\ra$. If $R$ is a field the closed cosieves on $R$ are the maximal cosieve $\{f\mid \dom{f}=R\}$ and the minimal cosieve $\{R \rightarrow 0\}$.
\end{lem}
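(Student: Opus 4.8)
The plan is to pin a closed cosieve $M$ on a field $R$ down to one of the two extremes. I work throughout on the cosieve side in $\ra$: $M$ is a set of morphisms with domain $R$, closed under postcomposition, and $M$ covers $g:R\to D$ precisely when $g^*(M)=\{h\mid \dom{h}=D,\ hg\in M\}$ contains a cocovering family in $\op{\bJ^\ast}(D)$. First I would record two reductions. (a) Every closed cosieve contains the map $R\to 0$ to the terminal object: the empty family is a cocover of $0$ (the observation following Definition~\ref{top}) and is contained in $g^*(M)$ for $g:R\to 0$, so every $M$ covers $R\to 0$, whence $R\to 0\in M$ by closedness. (b) It therefore suffices to prove that if $M$ contains even one morphism $g_0:R\to A$ with $A$ nontrivial, then $M$ is the maximal cosieve; any other closed cosieve contains only maps to trivial algebras and, together with (a), equals the minimal cosieve $\{R\to 0\}$ generated by the map to the terminal object.

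To see that $\{R\to 0\}$ is itself closed I would isolate the sublemma: a cocovering family in $\op{\bJ^\ast}(D)$ all of whose codomains are trivial exists only when $D$ is trivial. This follows by induction on the construction of $\op{\bJ^\ast}$: the identity cover forces $D=0$ directly; for a fundamental system of orthogonal idempotents (\ref{top}.\ref{genDecomp}), collapsing each $D/\ideal{1-e_i}$ to $0$ forces every $e_i=0$, whence $1=\sum e_i=0$; for a separable extension (\ref{top}.\ref{genEmbed}), the algebra $D[X]/\ideal{p}$ with $p$ monic non-constant is never trivial over a nontrivial reduced $D$; and the composition clause is handled by applying the induction hypothesis twice. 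Given this, for any $g:R\to D$ one computes $g^*(\{R\to 0\})=\{D\to 0\}$, which contains a cocover of $D$ only if $D$ is trivial, so $\{R\to 0\}$ covers only maps it already contains.

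For the main case, suppose $g_0:R\to A$ lies in $M$ with $A$ nontrivial. Since every object of $\ra$ is finite dimensional over $K$, $A$ is a finite-dimensional, reduced, zero-dimensional algebra over the field $R$, hence a finite product $A\cong L_1\times\dots\times L_k$ of finite field extensions of $R$ with $k\geq 1$. Composing $g_0$ with a projection $A\to L_1$ (a morphism of $\ra$) and using closure under postcomposition puts $R\to L_1$ in $M$. Because $K$ has characteristic $0$ the extension $R\subseteq L_1$ is separable, so by the primitive element theorem $L_1\cong R[X]/\ideal{p}$ with $p$ monic, non-constant and separable; thus $\{R\to L_1\}$ is an elementary cocover of type \ref{top}.\ref{genEmbed} and lies in $M$. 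Consequently $1_R^*(M)=M$ contains a cocover of $R$, i.e.\ $M$ covers $1_R$, and closedness gives $1_R\in M$; postcomposition then yields every morphism out of $R$, so $M$ is maximal.

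The main obstacle is the last paragraph: one must convert a single nontrivial morphism in $M$ into an honest cocovering family of $R$ sitting inside $M$. This is exactly where the hypothesis that $R$ is a field is used, since its finite regular extensions split into field factors and, in characteristic $0$, each factor is monogenic and separable, matching the two clauses of Definition~\ref{top}. The remaining steps are either bookkeeping about cosieves or the elementary induction for the sublemma above.
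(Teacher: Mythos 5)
Your proof is correct and reaches the same destination as the paper's --- a single morphism of $M$ into a nontrivial algebra is upgraded to a singleton elementary cocover of $R$ of type \ref{top}.\ref{genEmbed}, whence $1_R\in M$ --- but by a slightly different route. The paper picks a maximal ideal $I$ of $A$, writes the field $A/I$ as a tower $R[a_1,\dots,a_n]$, and runs a descending induction: each stage is a simple separable extension of the previous one, so closedness pulls membership one step down the tower until one lands on a field automorphism of $R$, and composing with its inverse gives $1_R\in S$. You instead use the classical decomposition $A\cong L_1\times\dots\times L_k$ into field factors, project onto one factor, and invoke the primitive element theorem (valid since $K$ has characteristic $0$) to present $L_1$ as $R[X]/\ideal{p}$ in a single step; this trades the induction for a standard theorem and skips the detour through an automorphism. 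You also supply two checks the paper leaves implicit: that every closed cosieve contains all maps to trivial algebras, and that $\{R\to 0\}$ is itself closed, via the useful sublemma that a cocover whose codomains are all trivial forces the base to be trivial. Both arguments are equally classical --- maximal ideals on one side, the product-of-fields decomposition on the other --- which fits the paper's declared use of a classical metatheory in this section. One piece of bookkeeping worth making explicit in your last paragraph: $\pi_1 g_0:R\to L_1$ is only isomorphic to the canonical embedding $R\to R[X]/\ideal{p}$, so you should postcompose with that isomorphism (permitted, since $M$ is a cosieve) to obtain a family literally belonging to $\op{\bJ}(R)$ before concluding that $M$ covers $1_R$.
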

\begin{proof}
Let $S$ be a closed cosieve on $R$ and let $\varphi:R\rightarrow A \in S$ and let $I$ be a maximal ideal of $A$. If $A$ is nontrivial we have a field morphism $R\rightarrow A/I$ in $S$ where $A/I$ is a finite field extension of $R$. Let $A/I = R[a_1,...,a_n]$	. But then the morphism $\vartheta: R\rightarrow R[a_1,...,a_{n-1}]$ is covered by $S$. Thus $\vartheta \in S$ since $S$ is closed. By induction on $n$ we get that a field automorphism $\eta:R\rightarrow R$ is in $S$ but then by composition of $\eta$ with its inverse we get that $1_R \in S$. Consequently, any morphism with domain $R$ is in $S$.
\end{proof}

\begin{cor}
For an object $R$ of $\ra$. If $R$ is a field, then $\Omega(R)$ is a 2-valued boolean algebra.
\end{cor}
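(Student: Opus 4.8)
The plan is to read off the Boolean structure of $\Omega(R)$ directly from Lemma~\ref{closedsievefield} together with the description of the subobject classifier and the standard fact that $\Omega$ is an internal Heyting algebra in any topos.

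First I would unwind the relevant identification. Since the site is $(\raop,\bJ)$, the Fact on the subobject classifier says that $\Omega(R)$ is the set of closed sieves on $R$ computed \emph{in} $\raop$; by the Definition of closed cosieve these are precisely the closed cosieves on $R$ in $\ra$. This set is partially ordered by inclusion, with meets given by intersection, top element the maximal (co)sieve $\{f\mid \dom{f}=R\}$, and bottom element the least closed (co)sieve. So the task reduces to counting closed cosieves and checking the lattice operations.

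Next I would invoke Lemma~\ref{closedsievefield}: because $R$ is a field, the only closed cosieves on $R$ are the maximal cosieve $\{f\mid \dom{f}=R\}$ and the minimal cosieve $\{R\rightarrow 0\}$. These two are genuinely distinct, since $R$ a field forces $1\neq 0$, hence $R\neq 0$, so the identity $1_R$ lies in the maximal cosieve but not in $\{R\rightarrow 0\}$. Thus $\Omega(R)$ has exactly two elements, the maximal cosieve serving as $\top$ and the minimal cosieve as $\bot$, with $\bot<\top$.

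Finally I would close the argument by appealing to the general fact that $\Omega$ is an internal Heyting algebra object, so each $\Omega(R)$ inherits a Heyting algebra structure whose operations are the external ones on closed sieves described above. A Heyting algebra --- indeed any bounded lattice --- with exactly the two elements $\bot<\top$ is automatically the two-element Boolean algebra: the only possible complement of $\bot$ is $\top$ and of $\top$ is $\bot$, and the law of excluded middle holds trivially. Hence $\Omega(R)$ is a $2$-valued Boolean algebra. The one point needing care, and it is bookkeeping rather than a real obstacle, is the sieve/cosieve duality: one must confirm that ``closed sieve on $R$ in $\raop$'' means exactly ``closed cosieve on $R$ in $\ra$'' so that Lemma~\ref{closedsievefield} applies verbatim; once the two-element count is in hand, the Boolean structure is forced.
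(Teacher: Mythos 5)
Your proof is correct and follows essentially the same route as the paper: both read off from Lemma~\ref{closedsievefield} that $\Omega(R)$ has exactly two elements and conclude it is the two-element Boolean algebra (the paper additionally notes the correspondence of these two cosieves with the idempotents $1$ and $0$ of $R$, in preparation for the next corollary). Your added checks --- that the two cosieves are distinct because $R$ is nontrivial, and that a two-element Heyting algebra is automatically Boolean --- are sound and only make the argument more explicit.
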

\begin{proof}
This is a direct Corollary of Lemma \ref{closedsievefield}. The maximal cosieve $(1_R)$ correspond to the idempotent $1$ of $R$, that is the idempotent $e$ such that, $\ker 1_R = \ideal{1-e}$. Similarly the cosieve $\{R\rightarrow 1\}$ correspond to the idempotent $0$.
\end{proof}

\begin{cor} 
For an object $A$ of $\ra$, $\Omega(A)$ is isomorphic to the set of idempotents of $A$ and the Heyting algebra structure of $\Omega(A)$ is the boolean algebra of idempotents of $A$.
\end{cor}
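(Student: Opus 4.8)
The plan is to reduce the general case to the field case already settled by Lemma~\ref{closedsievefield} and its two corollaries, using the product decomposition of a regular algebra together with the fact that $\Omega$ is a sheaf.

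First, reasoning classically as permitted in this section, I would observe that an object $A$ of $\ra$ is finite dimensional over $K$, reduced, and zero dimensional, hence (classically) a finite product of fields. Concretely there is a fundamental system of orthogonal idempotents $(e_i)_{i\in I}$ of $A$ whose summands $L_i = A/\ideal{1-e_i}$ are fields (finite extensions of $K$), and every idempotent of $A$ is uniquely a sum $\sum_{i\in J} e_i$ over a subset $J\subseteq I$. Thus the boolean algebra of idempotents of $A$ is isomorphic to the power-set algebra $\prod_{i\in I}\{0,1\}$, the $i$-th factor consisting of the two idempotents $0,1$ of the field $L_i$.

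Next, since $\Omega$ is a sheaf and $\Omega(0)=1$ (the empty family covers the trivial algebra $0$, so its unique amalgamation forces $\Omega(0)$ to be a singleton), Corollary~\ref{sheafofprodequalprodofsheaf} applies with the canonical morphisms $\varphi_i\p A\to L_i$ and yields a bijection $\Omega(A)\cong\prod_{i\in I}\Omega(L_i)$, $S\mapsto(\Omega(\varphi_i)S)_{i\in I}$. By the Corollary to Lemma~\ref{closedsievefield}, each $\Omega(L_i)$ is the two-element boolean algebra, in which the maximal cosieve corresponds to the idempotent $1$ and the minimal cosieve $\{L_i\to 0\}$ to the idempotent $0$. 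Composing, I obtain a bijection between $\Omega(A)$ and $\prod_{i\in I}\{0,1\}$, hence between $\Omega(A)$ and the set of idempotents of $A$, sending a closed cosieve $S$ to the idempotent $\sum\{e_i : \Omega(\varphi_i)S \text{ is the maximal cosieve of } L_i\}$.

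Finally I would upgrade this bijection to an isomorphism of Heyting (indeed boolean) algebras. The point is that the Heyting operations on $\Omega$ are morphisms $\wedge,\vee,\Rightarrow\p\Omega\times\Omega\to\Omega$ and $\top,\bot\p\mathbf{1}\to\Omega$ of the topos, i.e.\ natural transformations, so each restriction map $\Omega(\varphi_i)$ commutes with them; consequently the operations on $\Omega(A)$ are computed componentwise under the isomorphism $\Omega(A)\cong\prod_{i\in I}\Omega(L_i)$. A product of two-element boolean algebras is a boolean algebra, so $\Omega(A)$ is boolean and its structure agrees componentwise with that of $\prod_{i\in I}\{0,1\}$, which under the identification above is exactly the boolean algebra of idempotents of $A$. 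I expect the main obstacle to be precisely this last step: justifying that the set-level isomorphism of Corollary~\ref{sheafofprodequalprodofsheaf} transports the full Heyting structure and not merely the underlying sets. This rests on the naturality of the operations of $\Omega$, together with the (classical) decomposition of $A$ into a product of fields, which is where the use of classical logic in this section is essential.
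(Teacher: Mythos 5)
Your proposal is correct and follows essentially the same route as the paper's own proof: decompose $A$ classically as a finite product of field extensions, identify each $\Omega(F_i)$ as the two-element boolean algebra via Lemma~\ref{closedsievefield}, and conclude $\Omega(A)\cong\prod_i\Omega(F_i)$ is the boolean algebra of idempotents. You merely make explicit two steps the paper leaves implicit (the appeal to Corollary~\ref{sheafofprodequalprodofsheaf} for the product decomposition of $\Omega(A)$, and the naturality argument transporting the Heyting operations componentwise), which is a welcome clarification rather than a different approach.
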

\begin{proof}
Classically a finite dimension regular algebra over $K$ is isomorphic to a product of field extensions of $K$. Let $A$ be an object of $\ra$, then $A \cong F_1\times...\times F_n$ where $F_i$ is a finite field extension of $K$. The set of idempotents of $A$ is $\{(d_1,...,d_n) \mid 1\leq j \leq n,  d_j \in F_j, d_j = 0 \text{ or } d_j = 1\}$. But this is exactly the set $\Omega(F_1)\times...\times \Omega(F_n) \cong \Omega(A)$. It is obvious that since $\Omega(A)$ is isomorphic to a product of boolean algebras, it is a boolean algebra with the operators defined pointwise.
\end{proof}

\begin{thm}
\label{booleaness}
The topos $\Sh(\raop,\bJ)$ is boolean.
\end{thm}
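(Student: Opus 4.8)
The plan is to show that $\Sh(\raop,\bJ)$ is boolean by proving that its subobject classifier $\Omega$ is a \emph{complemented} (boolean) algebra object, or equivalently that the internal logic validates the law of excluded middle. The preceding corollaries have already done the essential local work: for every object $A$ of $\ra$, the Heyting algebra $\Omega(A)$ is (classically) isomorphic to the boolean algebra of idempotents of $A$, since $A \cong F_1 \times \dots \times F_n$ decomposes as a finite product of field extensions of $K$ and each $\Omega(F_i)$ is the two-element boolean algebra. So the internal Heyting algebra $\Omega$ is, at each stage, a genuine boolean algebra.

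The key remaining step is to promote this pointwise (objectwise) booleanness to a statement about the topos itself, namely that the negation map $\neg:\Omega\rightarrow\Omega$ is an involution internally, equivalently that $\Omega \forces \forall \omega\,[\omega \lor \neg\omega = \top]$. First I would observe that the isomorphisms $\Omega(A)\cong \mathrm{Idem}(A)$ established above are natural in $A$: the restriction map $\Omega(\varphi)$ along $\varphi:A\rightarrow B$ corresponds to the ring homomorphism's action $e \mapsto \varphi(e)$ on idempotents, which preserves the boolean operations $\top=1$, $\bot=0$, $e_1\land e_2 = e_1e_2$, $e_1\lor e_2 = e_1+e_2-e_1e_2$, and $\neg e = 1-e$. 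Hence the Heyting structure morphisms on $\Omega$ are, under this natural isomorphism, exactly the pointwise boolean operations on the presheaf of idempotents. Because complementation $e\mapsto 1-e$ satisfies $e \lor (1-e) = e + (1-e) - e(1-e) = 1 = \top$ and $e \land (1-e) = e(1-e) = 0 = \bot$ in every $A$, the excluded middle and non-contradiction laws hold at every stage, and therefore hold in the internal logic.

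Concretely, I would verify that for any object $A$ and any closed cosieve $S \in \Omega(A)$, the corresponding idempotent $e$ and its complement $1-e$ give cosieves whose join is the maximal cosieve and whose meet is the minimal one, matching $\top$ and $\bot$ in $\Omega(A)$; since this holds uniformly and naturally, the lattice $\Omega$ is complemented as an internal lattice. A topos whose subobject classifier is an internal boolean algebra is boolean, which is precisely the assertion of the theorem.

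The main obstacle, and the reason the classical metatheory is invoked, is the decomposition $A \cong F_1\times\dots\times F_n$ into \emph{fields}: this requires deciding, for each generator, whether a polynomial is irreducible (equivalently, splitting a finite-dimensional regular algebra into its field components), which is not constructively available. Thus the genuinely nontrivial content is not in the final assembly step but in Lemma~\ref{closedsievefield} and its corollaries, where classical reasoning about maximal ideals and field quotients was used to pin down $\Omega(A)$ exactly. Once those are granted, the present theorem follows by the routine naturality-and-complementation argument sketched above.
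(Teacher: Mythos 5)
Your proposal is correct and follows essentially the same route as the paper: the paper's own proof of Theorem \ref{booleaness} is a one-line assembly step that leans entirely on Lemma \ref{closedsievefield} and its corollaries identifying $\Omega(A)$ with the boolean algebra of idempotents of $A$, exactly as you do. You simply spell out the naturality and complementation details that the paper leaves implicit, and you correctly locate the classical content in the decomposition of $A$ into a product of field extensions.
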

\begin{proof}
The subobject classifier of $\Sh(\raop,\bJ)$ is $1\xrightarrow{\true} \Omega$ where for an object $A$ of $\ra$ one has $\true_A(\ast) = 1 \in A$.
\end{proof}

It is not possible to show that the topos $\Sh(\raop,\bJ)$ is boolean in an intuitionistic metatheory as we shall demonstrate. First we recall the definition of the \emph{Limited principle of omniscience} (LPO for short).
\begin{defn}[LPO]
For any binary sequence $\alpha$ the statement $\forall n [\alpha(n) = 0] \lor \exists n [\alpha(n) = 1]$ holds.
\end{defn}
LPO cannot be shown to hold intuitionistically. One can, nevertheless, show that it is weaker than the law of excluded middle \cite{bridges1987varieties}.

\begin{thm}
\label{notshownboolean}
Intuitionistically, if $\Sh(\raop,\bJ)$ is boolean then \textup{LPO} holds.
\end{thm}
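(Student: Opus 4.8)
The plan is to prove the contrapositive in spirit: assume the topos is boolean and extract a decision procedure witnessing LPO. The key observation is that booleanness means the subobject classifier $\Omega$ satisfies the law of excluded middle internally, and in particular every closed sieve has a complement. I would encode an arbitrary binary sequence $\alpha$ as data on the site $\raq$ and build from it a specific closed cosieve (equivalently, a specific truth value in $\Omega(K)$ or in $\Omega$ evaluated at some object) whose decidability forces a decision about whether $\alpha$ is eventually always $0$ or hits a $1$.

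Concretely, I would use $\alpha$ to construct a subobject of the terminal object, i.e.\ a proposition $P_\alpha$ in the internal logic. The natural candidate is built from the diophantine-style phenomenon already exploited in the proof of Theorem \ref{dependentchoicefails}: one associates to $\alpha$ a directed system of separable extensions of $K=\rationals$ that adjoins a root of $X^{2^n}-2$ precisely at those stages $n$ where $\alpha(n)=1$ (or some analogous irreducible witness), and considers the internal statement asserting the existence of a compatible family/root. The point is that whether this family can be amalgamated into a genuine element over a nontrivial algebra depends on whether $\alpha$ ever fires, mirroring the triviality argument using Eisenstein's criterion and the dimension count $m_i < 2^{m_i}$ from Theorem \ref{dependentchoicefails}. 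If $\Sh(\raqop,\bJ)$ is boolean, then $K\forces P_\alpha \lor \neg P_\alpha$, and since $K$ is a field, Lemma \ref{closedsievefield} tells us $\Omega(K)$ is two-valued, so the disjunction is decided outright at $K$ without passing to a proper cover. Decoding which disjunct holds should translate back to deciding $\forall n[\alpha(n)=0]$ versus $\exists n[\alpha(n)=1]$.

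The main obstacle I anticipate is ensuring that the forcing of the disjunction at the field $K$ really yields a metatheoretic decision about $\alpha$ rather than merely reshuffling the undecidability. The $\lor$ clause of Definition \ref{forcing} permits passing to a cover before deciding; the leverage here is precisely Lemma \ref{closedsievefield}, which collapses covers over a field to the trivial/maximal dichotomy, so I must check carefully that the constructed proposition $P_\alpha$ is genuinely a closed-sieve-valued statement at $K$ and that booleanness of $\Omega$ gives complementation there constructively. I would verify that the sheaf condition does not secretly smuggle in a cover that splits the decision, and that the constant-sheaf machinery of Section \ref{sec:powerseires} (used to phrase $\alpha$ as an element of $\sN$-indexed data) behaves as expected.

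I would finish by arguing that the existence of this uniform decision procedure---one that, given $\alpha$ presented constructively, returns the correct disjunct---is exactly the statement LPO. A subtle point to flag is that LPO is a schema quantifying over \emph{all} binary sequences, so the construction $\alpha \mapsto P_\alpha$ must be uniform and the appeal to booleanness must be a single internal principle, not an instance-by-instance classical choice; I expect the cleanest route is to internalize $\alpha$ as an actual element of a binary-sequence object and to derive LPO as the internal decidability of the associated $\Omega$-valued predicate, then read it off externally. The delicate interface between the internal disjunction and the external omniscience claim is where I expect to spend most of the care.
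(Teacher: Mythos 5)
Your overall strategy---internalize $\alpha$ as a proposition or predicate, apply booleanness to get an internal disjunction at $K$, and decode the result externally---is the right shape, and your final paragraph correctly identifies the delicate point. But two of your load-bearing steps do not survive scrutiny. First, you lean on Lemma \ref{closedsievefield} to claim that $\Omega(K)$ is two-valued and hence that the disjunction is ``decided outright at $K$ without passing to a proper cover.'' The paper explicitly flags that, with the exception of Theorem \ref{notshownboolean} itself, the reasoning in Section \ref{sec:booleaness} is \emph{classical}; Lemma \ref{closedsievefield} is proved using maximal ideals and is not available in the intuitionistic metatheory in which this theorem must be proved. Indeed, asserting intuitionistically that every closed cosieve on $K$ is either maximal or minimal is itself a disguised instance of excluded middle, so this step is circular. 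Second, even granting booleanness internally, the forcing clause for $\lor$ only yields a cover $\{K\to A_i\}\cup\{K\to B_j\}$ splitting the disjunction; nothing prevents the cover from being proper. The actual content of the paper's proof lies precisely in handling this cover: it unwinds the existential branch through further (finite) cocovers, extracts finitely many concrete indices $n_t$, checks $\alpha(n_t)$ for each, and uses the fact that a homomorphism from the field $K$ into a nontrivial finite-dimensional algebra is injective to show that a failed check forces the corresponding component to be trivial. This finite search, made possible by the finiteness of covers and the finite-dimensionality of the objects of $\ra$, is what turns the internal disjunction into an external decision; your proposal does not engage with it.

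A smaller but real issue: your proposed encoding of $\alpha$ via a directed system of extensions adjoining roots of $X^{2^n}-2$ is both unnecessary and too narrow. That Eisenstein/dimension-count apparatus is specific to $K=\rationals$ and to the failure of dependent choice; Theorem \ref{notshownboolean} is stated for arbitrary $K$ of characteristic $0$. The paper's encoding is much more economical: view $\alpha\in K[[X]]$ as an element of $\F[[X]](K)$, transport it to $\beta\in\F^{\sN}(K)$ via the isomorphism of Lemma \ref{powerserieslemma}, and apply booleanness directly to the statement $\forall n[\beta(n)=0]\lor\exists n[\beta(n)=1]$. You would do better to drop the bespoke proposition $P_\alpha$ and instead carry out the cover-unwinding argument for this statement.
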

\begin{proof}
Let $\alpha \in K[[X]]$ be a binary sequence. By Lemma \ref{powerserieslemma} one has an isomorphism $\Lambda:\F[[X]] \isoarrow \F^{\sN}$. Let $\Lambda_K(\alpha) = \beta \in \F^{\sN}(K)$. Assume the topos $\Sh(\raop,\bJ)$ is boolean. Then one has 
$
K\forces \forall n[\beta(n) = 0] \lor \exists n[\beta(n) = 1]
$.
 By \fbox{$\lor$} this holds only if there exist a cocover of $K$
\begin{align*}
\{\vartheta_i: K \rightarrow A_i \mid i \in I\}\cup\{\xi_j: K\rightarrow B_j \mid j\in J\}
\end{align*}
such that $B_j \forces \forall n [(\beta\xi_j)(n) = 0]$ for all $j\in J$ and $A_i \forces \exists n[(\beta\vartheta_i)(n) = 1]$ for all $i\in I$. Note that at least one of $I$ or $J$ is nonempty since $K$ is not covered by the empty cover.

For each $i\in I$ there exist a cocover $\{\eta_\ell:A_i \rightarrow D_\ell \mid \ell \in L\}$ of $A_i$ such that for all $\ell\in L$,  we have $D_\ell \forces (\beta\vartheta_i \eta_\ell)(m) = 1$ for some $m \in \sN(D_\ell)$. Let $m =\sum_{t\in T} e_t n_t$ then we have a cocover $\{\xi_t:D_\ell \rightarrow C_t=D_\ell/\ideal{1-e_t} \mid t \in T\}$ such that $C_t \forces (\beta\vartheta_i \eta_\ell \xi_t) (n_t) = 1$ which implies $\xi_t \eta_\ell \vartheta_i (\alpha(n_t)) = 1$. For each $t$ we can check whether $\alpha(n_t) = 1$. If $\alpha(n_t) = 1$ then we have witness for $\exists n [\alpha(n) = 1]$. Otherwise, we have $\alpha(n_t) = 0$ and $\xi_t \eta_\ell \vartheta_i(0) = 1$. Thus the map $\xi_t \eta_\ell \vartheta_i : K \rightarrow C_t$ from the field $K$ cannot be injective, which leaves us with the conclusion that $C_t$ is trivial. If for all $t \in T$, $C_t$ is trivial then $D_\ell$ is trivial as well. Similarly, if for every $\ell\in L$, $D_\ell$ is trivial then $A_i$ is trivial as well. At this point one either have either
\begin{enumerate*}[label=(\roman*)] 
\item a natural number $m$ such that $\alpha(m) = 1$ in which case we have a witness for $\exists n [\alpha(n) = 0]$.
\end{enumerate*} Or 
\begin{enumerate*}[label=(\roman*),resume]
\item we have shown that for all $i \in I$, $A_i$ is trivial in which case we have a cocover $\{\xi_j:K\rightarrow B_j \mid j\in J\}$ such that $B_j \forces \forall n [(\beta\xi_j)(n) = 0]$ for all $j\in J$. Which by \fbox{LC} means $K \forces \forall n [\beta(n) = 0]$ which by \fbox{$\forall$} means that for all arrows $K\rightarrow R$ and elements $d \in \sN(R)$, $R\forces \beta(d) = 0$. In particular for the arrow $K\xrightarrow{1_K} K$ and every natural number $m$ one has $K \forces \beta(m) = 0$ which implies $K\forces \alpha(m) = 0$. By \fbox{$=$} we get that $\forall m \in \nats[\alpha(m) = 0]$.
\end{enumerate*}
Thus we have shown that LPO holds. 
\end{proof}

\begin{cor}
It cannot be shown in an intuitionistic metatheory that the topos $\Sh(\raop,\bJ)$ is boolean.\qed
\end{cor}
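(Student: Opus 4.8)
The plan is to fix an arbitrary binary sequence $\alpha$, read it as an element of $K[[X]] = K^\nats$, and transport it to the internal sequence object via the isomorphism $\Lambda : \F[[X]] \isoarrow \F^{\sN}$ of Lemma \ref{powerserieslemma}; write $\beta = \Lambda_K(\alpha) \in \F^{\sN}(K)$. Assuming $\Sh(\raop,\bJ)$ is boolean, the internal law of excluded middle applies, and since $\F$ is a field (so that the failure of $\beta(n)=0$ means $\beta(n)$ is invertible, hence equal to $1$ for a value coming from a binary sequence) it can be phrased as
\[
K \forces \forall n[\beta(n)=0] \lor \exists n[\beta(n)=1].
\]
The whole point of the argument is then to descend this single forcing statement to an honest external decision about the sequence $\alpha$.

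First I would unfold the disjunction with \fbox{$\lor$}, obtaining a cocover of $K$ split into \emph{existential} arrows $\{\vartheta_i : K \to A_i\}_{i\in I}$, on which $\exists n[\beta(n)=1]$ is forced, and \emph{universal} arrows $\{\xi_j : K \to B_j\}_{j\in J}$, on which $\forall n[\beta(n)=0]$ is forced. Since $K$ is a field it is not covered by the empty family, so at least one of $I,J$ is nonempty.

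Next, for each existential piece $A_i$ I would strip the quantifier with \fbox{$\exists$}, producing a further cocover carrying a witness in $\sN(D)$ for some $D$. The key technical move is that, by the description of $\sN$ as the sheafification of the constant presheaf (Lemma \ref{constantsheaf}), such a witness is a formal sum $\sum_t e_t n_t$ over a fundamental system of orthogonal idempotents; splitting along those idempotents using clause \ref{top}.\ref{genDecomp} isolates genuine external numerals $n_t$ on components $C_t$ with $C_t \forces \beta(n_t)=1$. Transporting this back through $\Lambda$, the forcing says precisely that the image of $\alpha(n_t)$ under the composite $K \to C_t$ equals $1$. Now I can externally inspect the concrete value $\alpha(n_t) \in \{0,1\}$: if $\alpha(n_t)=1$ I have produced a witness for $\exists n[\alpha(n)=1]$ and LPO is settled; if $\alpha(n_t)=0$ then $0=1$ in $C_t$, so the field map $K \to C_t$ cannot be injective and $C_t$ is trivial. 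Propagating triviality upward through the successive cocovers (a trivial component means its predecessor is covered by the empty family) shows the whole piece $A_i$ is trivial.

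Finally, should this search never yield a witness, every $A_i$ is trivial, so the universal arrows $\{\xi_j\}_{j\in J}$ already cover $K$; by \fbox{LC} we obtain $K \forces \forall n[\beta(n)=0]$, and instantiating the \fbox{$\forall$} rule at $1_K$ and at each external numeral $m$ gives $K \forces \beta(m)=0$, whence $\alpha(m)=0$ by \fbox{$=$} and the isomorphism. Either branch decides LPO for $\alpha$. I expect the main obstacle to lie in the middle step: converting the internal existential witness living in $\sN$ into an external numeral by way of the idempotent decomposition, and faithfully tying the internal forcing of $\beta(n_t)=1$ back to the external binary value $\alpha(n_t)$ through Lemma \ref{powerserieslemma}, so that the triviality conclusion is justified and can be propagated up the tower of cocovers.
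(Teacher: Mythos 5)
Your proposal is correct and follows essentially the same route as the paper: reduce the corollary to showing that booleanness of $\Sh(\raop,\bJ)$ implies LPO, by transporting a binary sequence through the isomorphism $\F[[X]]\cong\F^{\sN}$ of Lemma \ref{powerserieslemma}, unfolding the forced disjunction into existential and universal pieces of a cocover, extracting external numerals from the idempotent decomposition of witnesses in $\sN$, and propagating triviality up the tower of cocovers. The only cosmetic difference is that you justify the specific shape of the internal instance of excluded middle via the field structure of $\F$, where the paper simply asserts it.
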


\bibliographystyle{eptcs}
\bibliography{/Users/bmannaa/st/bibDb}
\end{document}